\documentclass[11pt,a4paper]{article}
\usepackage[T1]{fontenc}
\usepackage{amsmath,amssymb}
\usepackage{lmodern}
\usepackage[a4paper,left=25mm,right=25mm,top=25mm,bottom=25mm,footskip=10mm]{geometry}
\usepackage{microtype}
\usepackage{titling}
\usepackage{authblk}
\usepackage[authoryear,longnamesfirst]{natbib}
\usepackage{amsthm}
\usepackage{mathtools}
\usepackage{needspace}
\usepackage{xcolor}
\usepackage{hyperref}
\hypersetup{colorlinks=true,citecolor=blue,urlcolor=blue,linkcolor=blue,
  pdftitle={Refined Cram\'er-type moderate deviations by Stein's method under dependence},
  pdfauthor={Hui-Lin Liu, Song-Hao Liu, Dawei Lu}}
\predate{}
\postdate{}
\AtBeginDocument{%
  \setlength{\abovedisplayskip}{7.5pt plus 2pt minus 2pt}%
  \setlength{\belowdisplayskip}{7.5pt plus 2pt minus 2pt}%
  \setlength{\abovedisplayshortskip}{4pt plus 2pt}%
  \setlength{\belowdisplayshortskip}{6pt plus 2pt minus 2pt}%
}
\allowdisplaybreaks[0]
\DeclareMathOperator{\Var}{\mathbb{V}\mspace{-1mu}ar}

\theoremstyle{plain}

\newtheorem{theorem}{Theorem}[section]
\newtheorem{proposition}{Proposition}[section]
\newtheorem{lemma}[theorem]{Lemma}

\theoremstyle{definition}

\newtheorem*{assumption}{Assumption}

\newtheorem{corollary}{Corollary}[section]
\newtheorem{remark}{Remark}[section]
\newtheorem*{notation}{Notation}
\newtheorem*{organization}{Organization}

\numberwithin{equation}{section}

\newcommand{\E}{\mathbb{E}}
\newcommand{\I}{\mathbb{I}}
\newcommand{\p}{\mathbb{P}}

\begin{document}
\title{\LARGE\bfseries Refined Cram\'er-type moderate deviations\\
by Stein's method under dependence}
\renewcommand{\Authfont}{\large}
\renewcommand{\Affilfont}{\small}
\setlength{\affilsep}{0.5em}
\author{Hui-Lin Liu\thanks{Email: \href{mailto:liuhuilin@mail.dlut.edu.cn}{\nolinkurl{liuhuilin@mail.dlut.edu.cn}}.}}
\author{Song-Hao Liu\thanks{Email: \href{mailto:liusonghao@dlut.edu.cn}{\nolinkurl{liusonghao@dlut.edu.cn}}.}}
\author{Dawei Lu\thanks{Corresponding author. Email: \href{mailto:ludawei_dlut@163.com}{\nolinkurl{ludawei_dlut@163.com}}.}}
\affil{School of Mathematical Sciences, Dalian University of Technology\\Dalian, Liaoning, China}
\date{}
\maketitle

\begin{abstract}

\noindent In this paper, we establish refined Cram\'er-type moderate deviations for dependent random variables at any fixed order $m\geq3$. Combining higher-order Stein identities with exponential tilting, we obtain cumulant-corrected tail approximations with explicit error bounds for boundedly supported kernels. Our main application gives a fourth-order expansion under bounded local dependence, reaching an $O(n^{3/10})$ deviation range under standard normalization with fixed dependence parameters. It improves on previous work in both deviation range and applicability, and also covers bounded $m$-dependent sequences without an independent-base representation. Further applications to the generalized combinatorial central limit theorem and the Curie--Weiss model also extend some earlier results under permutation and mean-field dependence.

\end{abstract}

\noindent\textbf{Keywords:} Stein's method; refined Cram\'er-type moderate deviation; Stein identity; cumulants; local dependence; generalized combinatorial central limit theorem; Curie--Weiss model.

	\section{Introduction}

	Let $W$ be a random variable of interest, typically arising from a dependent system. Our aim is to approximate $\p(W>x)$ with cumulant corrections and explicit relative-error bounds. For standardized sums of i.i.d.\ random variables, such expansions go back to \citet{C1938}. Let $X_1,\ldots,X_n$ be i.i.d.\ random variables such that $\E X_1=0$, $\E X_1^2=1$ and $\E e^{t_0|X_1|}<\infty$ for some $t_0>0$. Set $W_n=n^{-1/2}\sum_{i=1}^nX_i$, and let $\Phi(x)$ denote the standard normal distribution function. The classical Cram\'er expansion takes the form
	\begin{equation}\label{intro-cramer-expansion}
		\frac{\p(W_n>x)}{1-\Phi(x)}
		=\exp\left\{\frac{x^3}{\sqrt n}
		\lambda\left(\frac{x}{\sqrt n}\right)\right\}
		\left[1+O\left(\frac{1+x}{\sqrt n}\right)\right],
	\end{equation}
	uniformly for $0\leq x=o(\sqrt n)$ (see \citet[Chapter~8, Theorem~1]{PETROV1975}). Here $\lambda(t)$ is the Cram\'er series, which converges near zero and satisfies $\lambda(0)=\E X_1^3/6$. The successive truncations of this series describe how cumulant corrections refine the normal-tail approximation. Omitting the exponential correction gives, for any fixed $C>0$,
	\begin{equation}\label{intro-normal-tail-ratio}
		\frac{\p(W_n>x)}{1-\Phi(x)}
		=1+O\left(\frac{1+x^3}{\sqrt n}\right),
		\qquad 0\leq x\leq Cn^{1/6}.
	\end{equation}
	This relative-error estimate remains informative for growing $x$, when an absolute normal-approximation bound may be too large compared with the tail probability. For related Cram\'er-type moderate deviation results under dependence, including dependent sums and martingales, see, for example, \citet{RAIC2007,FGL2013,FANSHAO2024,FK2023}. Retaining the first term of the Cram\'er series gives the skewness correction. With $\Gamma_{3,n}=\E W_n^3=n^{-1/2}\E X_1^3$, the third-order refinement is
	\begin{equation}\label{intro-third-cumulant-tail-ratio}
		\frac{\p(W_n>x)}{1-\Phi(x)}
		=e^{\Gamma_{3,n}x^3/6}
		\left[1+O\left\{\frac{1+x}{\sqrt n}+\frac{x^4}{n}\right\}\right],
		\qquad 0\leq x\leq Cn^{1/4}.
	\end{equation}
	Thus the third-cumulant correction extends the classical $n^{1/6}$ scale of \eqref{intro-normal-tail-ratio} to $n^{1/4}$. This improvement reflects the effect of skewness on the tail. Although $\Gamma_{3,n}$ is small, its product with $x^3$ need not be small when $x$ grows with $n$. Keeping this term in the exponential accounts for the leading difference from the normal tail instead of including it in the error. Higher-order refinements retain further terms of the Cram\'er series and require control of the remainder over a larger deviation range.

	Cumulant methods give a general approach to these expansions. \citet{DJS2022} survey the cumulant approach to normal approximation and Cram\'er--Petrov expansions, including cumulant bounds under dependence. Refined results are also available in specific settings. \citet{GSS2022} studied a refined approximation for self-normalized sums. For bounded local statistics of independent variables, \citet{FLS2020} obtained a third-order approximation by combining a standardized Poisson approximation with spline functions. Their Remark~2.1 identifies the restricted range of the exponential concentration inequality as an obstacle to higher-order expansions. For dependent variables, going beyond skewness correction requires more than retaining another term in a formal expansion. The moment generating function generally does not factorize, and the remainder must be controlled throughout the larger deviation range. In particular, computing a correction does not by itself provide the concentration inequalities and normal-approximation bounds needed to justify it. Our aim is to obtain these estimates together from a Stein identity, so that the same general theorem can be applied to different dependence structures.

	We combine higher-order Stein identities with exponential tilting and the normal Stein equation. The overall proof follows the classical exponential-tilting approach to Cram\'er-type deviations (see \citealp{C1938,PETROV1975}; \citealp[Section~6]{DJS2022}). After controlling the exponential moments, we introduce a tilted measure and establish a normal approximation under that measure. A change of measure then recovers the original tail probability. Stein's approach to normal approximation began with \citet{STEIN1972} and was developed systematically in \citet{STEIN1986}. It has proved useful for distributional approximation under a wide range of dependence structures \citep[see, e.g.,][]{STEIN1986,CS2004,SZ2019,LZ2023}. For uncorrected Cram\'er-type estimates, \citet{CFS2013AOP} used Stein identities with boundedly supported random measures. \citet{Z2023} treated unbounded exchangeable pairs, and \citet{LZ2023} removed the bounded-support assumption for general Stein identities. Here we use a higher-order identity to retain the cumulant corrections while using the usual bounds for the solution of the normal Stein equation.

	The main difficulty is to use these bounds under the tilted measure without losing the accuracy required by the higher-order corrections. For a fixed order $m\geq3$, we use the cumulant polynomial $\Gamma(u)=u^2/2+\sum_{j=3}^{m}\Gamma_ju^j/j!$, where $\Gamma_j$ is the $j$th cumulant of a standardized random variable $W$. We choose the tilt parameter $\lambda$ through $\Gamma'(\lambda)=x$. Under a tilt with parameter $t$, both the location and the fluctuations of the random variable change. We therefore conjugate the higher-order Stein identity and recenter it at $\Gamma'(t)$ (see \eqref{mth-gamma-conjugate-stein}). Collecting the first-derivative terms also identifies $\Gamma''(t)$ as the variance parameter of the Gaussian reference in \eqref{mth-BE-stein-equation}. This puts the deterministic center and variance corrections into the Stein operator, rather than treating them as errors. The remaining terms contain powers of the tilt parameter, higher derivatives and exponentially weighted kernels. Centered exponential-moment estimates and small shifts of the tilt parameter control these terms. We estimate scalar and kernel remainders separately.

	A further difficulty arises when we smooth the indicator in the Stein equation. Estimating the higher derivatives of its solution requires bounds on short-interval probabilities under the tilted measure. We adapt the recursive concentration argument of \citet[Proposition~3.1]{CS2004} to obtain the tilted concentration inequality \eqref{mth-tilted-concentration} in Proposition \ref{prop-mth-tilted-concentration}. Choosing the smoothing width according to the kernel and cumulant scales allows the recursive term to be absorbed. The recursive argument itself does not depend on higher-order cumulant corrections. It relies on a suitable Stein identity and estimates for the associated moments and remainder terms. Finally, recovering a relative tail estimate increases the effect of the tilted normal-approximation error because the Gaussian weighted tail integral is of order $(1+\lambda)^{-1}$. We therefore keep the dependence on the tilt parameter explicit in the normal-approximation, exponential-moment and variance estimates.

	Our main result, Theorem \ref{theorem22}, gives a refined Cram\'er-type moderate deviation expansion at any prescribed fixed order $m\geq3$, under quantitative conditions on boundedly supported kernels and remainder coefficients. It gives a saddlepoint approximation with explicit error bounds. The assumptions are stated directly in terms of the Stein identity rather than a particular dependence structure.

	Our principal application, Theorem \ref{theorem31}, is a fourth-order expansion for bounded locally dependent sums. For standardized $W$, with $\Gamma_3=\E W^3$ and $\Gamma_4=\E W^4-3$, the correction factor is
	\begin{equation}\label{intro-fourth-cumulant-factor}
		\exp\left\{\frac{\Gamma_3x^3}{6}
		+\frac{\Gamma_4-3\Gamma_3^2}{24}x^4\right\}.
	\end{equation}
	The local dependence conditions include the bounded local-statistics setting of \citet{FLS2020}. With fixed dependence parameters, $n^{-1/2}$-scaled summands and a number of independent base variables comparable to $n$, the fourth-order result extends their third-order deviation scale from $n^{1/4}$ to $n^{3/10}$. Section \ref{sec3} gives the precise assumptions and parameter-dependent bounds. For bounded $m$-dependent sequences, Corollary \ref{cor-bounded-m-dependent} gives an explicit bound in which the dependence range may vary with $n$, without requiring an independent-base representation. The fourth-order term also matters when the skewness vanishes. As a simple classical illustration, the standardized sum of $n$ independent symmetric random signs has $\Gamma_3=0$ and $\Gamma_4=-2/n$. Its fourth-order factor in \eqref{intro-fourth-cumulant-factor} is $\exp\{-x^4/(12n)\}$, which is already nontrivial on the $n^{1/4}$ scale despite the absence of a skewness correction.

	The applications also include two models with global dependence: the generalized combinatorial central limit theorem and the Curie--Weiss model. In the former, a random permutation imposes a global matching constraint, while in the latter, the mean-field interactions are described by an exponential-family Gibbs distribution. Exchangeable-pair Stein identities give a third-order expansion for the generalized combinatorial central limit theorem and a fourth-order expansion for the Curie--Weiss model. Together with the local-dependence result, these applications show how the same method applies to different forms of dependence.

	The proof also gives a concentration inequality under the original measure with an exponentially decaying factor (see Remark \ref{rem-original-exponential-concentration}). This inequality may be of independent interest beyond the moderate-deviation expansion.

	\phantomsection\label{notation-conventions}
	\begin{notation}
		Throughout the paper, $C$ denotes a generic positive constant that may change from line to line.  We write $a=O(b)$ if $|a|\leq C|b|$ and $a\asymp b$ if both $a=O(b)$ and $b=O(a)$. For a set $A$, $\#(A)$ denotes its cardinality, and $\I(\cdot)$ denotes the indicator function.  The auxiliary random variable $U\sim\mathrm{Unif}(0,1)$ is independent of the underlying random elements, but derived random functions may depend on $U$.
	\end{notation}

	\begin{organization}
	The rest of the paper is organized as follows. Section \ref{sec2} states the fixed-order moderate deviation theorem, and Section \ref{sec3} presents its applications. Section \ref{sec4} states the auxiliary estimates and uses them to prove the main theorem. Section \ref{sec5} first illustrates the construction of a third-order Stein identity and then proves the application results. Section \ref{sec-auxiliary-proofs} collects the proofs of the auxiliary estimates from Section \ref{sec4}. Appendices \ref{appB}--\ref{appD} provide the model-specific Stein identities and remainder estimates used in Section \ref{sec5}.
	\end{organization}

	\section{Main results}\label{sec2}\label{subsec-mth-order}
	Our main result gives a saddlepoint approximation based on an $m$th-order Stein identity. We state the assumptions below, keeping separate the quantities that control the exponential correction and the relative error.

	For a random variable $W$ and $t\in\mathbb R$ such that $\E e^{tW}<\infty$, let $\p_t$ denote the exponentially tilted probability measure and $\E_t$ the corresponding expectation. Define

	\begin{equation}\label{intro-tilted-definitions}
		M(t)\coloneqq\E e^{tW},\qquad \psi(t)\coloneqq\log M(t),\qquad
		\frac{d\p_t}{d\p}\coloneqq e^{tW-\psi(t)},\qquad
		\E_tY\coloneqq\E\{Ye^{tW-\psi(t)}\}.
	\end{equation}

	Let $m\geq3$ be fixed and let $W$ satisfy $\E W=0$ and $\E W^2=1$. Throughout this section, sums over empty index sets are understood to be zero, and $0^{-a}=+\infty$ for $a>0$. Define $\Gamma_j\coloneqq\operatorname{cum}_j(W)$ for $3\leq j\leq m$ and, for $u\in\mathbb R$,
	\begin{equation}\label{intro-cumulant-polynomial}
		\Gamma(u)\coloneqq\frac{u^2}{2}
		+\sum_{j=3}^{m}\frac{\Gamma_ju^j}{j!}.
	\end{equation}
	In this section, $C_m$ denotes a positive constant depending only on $m$ and may change from line to line.  We write $a=O_m(b)$ if $|a|\leq C_m|b|$.

	We first assume the following $m$th-order Stein identity.

	\begin{assumption}[{\rm A1}]
		\makeatletter
		\phantomsection\def\@currentlabel{A1}\label{A1}
		\makeatother
		There exist a constant $\rho_m>0$, random variables $\mathcal R_{m,1},\ldots,\mathcal R_{m,m}$, and random functions $\mathcal K_{m,1}(t),\mathcal K_{m,2}(t)$ supported on $[-\rho_m,\rho_m]$ such that the following identity holds for every $(m-1)$ times differentiable function $f(w)$ whose derivative $f^{(m-1)}(w)$ is absolutely continuous, whenever the expectations below are finite:
		\begin{align}\label{mth-Stein}
			\E\{Wf(W)\}
			&=\E f'(W)+\sum_{j=3}^{m}\frac{\Gamma_j}{(j-1)!}\E f^{(j-1)}(W)
			+\sum_{j=1}^{m}\E\{\mathcal R_{m,j}f^{(j-1)}(W)\}\\*
			&\quad+\E\int_{|t|\leq\rho_m}f^{(m)}(W+t)\mathcal K_{m,1}(t)dt
			+\E\int_{|t|\leq\rho_m}f^{(m)}(W+Ut)\mathcal K_{m,2}(t)dt. \notag
		\end{align}
	\end{assumption}
	\noindent Here and below, we define $f^{(0)}(w)\coloneqq f(w)$.

	We next control the random remainder coefficients in \eqref{mth-Stein} under exponential tilting.

	\begin{assumption}[{\rm A2}]
		\makeatletter
		\phantomsection\def\@currentlabel{A2}\label{A2}
		\makeatother
		There exist a constant $T_m\geq1$ and nonnegative even functions $r_{m,j}(t)$, $1\leq j\leq m$, increasing on $[0,T_m]$ such that, for all $|t|\leq T_m$,
		\begin{equation}\label{A2-moment}
			\E_t|\mathcal R_{m,j}|\leq r_{m,j}(t),\qquad 1\leq j\leq m.
		\end{equation}
	\end{assumption}

	The two remainder kernels are controlled by the following conditional bound.

	\begin{assumption}[{\rm A3}]
		\makeatletter
		\phantomsection\def\@currentlabel{A3}\label{A3}
		\makeatother
		There exists a nonnegative constant $\vartheta_m$ such that, for all $|t|\leq T_m$,
		\begin{equation}\label{A3-kernel}
			\E\biggl\{\int_{|u|\leq\rho_m}e^{|tu|}\bigl[|\mathcal K_{m,1}(u)|+|\mathcal K_{m,2}(u)|\bigr]du\,\bigg|\,W\biggr\}\leq \vartheta_m.
		\end{equation}
	\end{assumption}

	Finally, we impose deterministic bounds on the cumulant coefficients.

	\begin{assumption}[{\rm A4}]
		\makeatletter
		\phantomsection\def\@currentlabel{A4}\label{A4}
		\makeatother
		Let $\Gamma_{k,0}>0$, $3\leq k\leq m$, be constants such that $|\Gamma_k|\leq \Gamma_{k,0}$ for $3\leq k\leq m$.
	\end{assumption}

	The proof requires bounds on both the logarithmic moment generating function and the normal approximation under tilting. To state these bounds, define, for $u\in\mathbb R$,
	\[
		\Gamma_0(u)\coloneqq\frac{u^2}{2}
		+\sum_{j=3}^{m}\frac{\Gamma_{j,0}u^j}{j!}.
	\]
	Define
	\begin{align}
		\kappa_m(x)
		&\coloneqq \sum_{j=1}^{m}x^jr_{m,j}(x)+x^{m+1}\vartheta_m,\label{mth-kappa}\displaybreak[1]\\
		\eta_m(x)
		&\coloneqq\Gamma_0''(x)-1
		=\sum_{j=3}^{m}\frac{\Gamma_{j,0}x^{j-2}}{(j-2)!}.\label{mth-eta}
	\end{align}
	The quantity $\kappa_m(x)$ controls $|\psi(x)-\Gamma(x)|$, while $\eta_m(x)$ bounds the deviation of $\Gamma''(x)$ from $1$. We also set
	\begin{align}\label{mth-Delta-tilde-definition}
		\tilde{\Delta}_m(t)
		\coloneqq{}&
		\biggl[
		\rho_m+\max_{3\leq j\leq m}\Gamma_{j,0}^{1/(j-2)}
		+\vartheta_m^{1/(m-1)}
		\biggr]\biggl[
		e^{2\{\eta_m(t+1)+\kappa_m(t+1)\}}
		+\sum_{j=1}^{m}t^{j-1}r_{m,j}(t)\biggr]\notag\\
		&+\sum_{j=1}^{m}r_{m,j}(t)\sum_{\ell=1}^{j-1}t^{j-1-\ell}
		\biggl[
		\rho_m+\max_{3\leq k\leq m}\Gamma_{k,0}^{1/(k-2)}
		+\vartheta_m^{1/(m-1)}
		\biggr]^{1-\ell},
	\end{align}
	and
	\begin{align}\label{mth-Delta-definition}
		\Delta_m(t)
		\coloneqq{}&
		\tilde{\Delta}_m(t)
		\biggl[
		e^{2\kappa_m(t+1)}
		+\sum_{j=3}^{m}\Gamma_{j,0}
		\sum_{\ell=2}^{j-1}\bigl(t^{j-1-\ell}+1\bigr)\rho_m^{1-\ell}
		+\vartheta_m\sum_{\ell=2}^{m}\bigl(t^{m-\ell}+1\bigr)\rho_m^{1-\ell}
		\biggr]\notag\\
		&+e^{2\kappa_m(t+1)}
		\biggl\{
		\sum_{j=1}^{m}r_{m,j}(t+1)
		\biggl[
		t^{j-1}+1
		+\sum_{r=1}^{j-2}
		\rho_m^{-r}\bigl(t^{j-2-r}+1\bigr)
		\biggr]
		+\vartheta_m\bigl(t^m+1\bigr)
		\biggr\}.
	\end{align}
	We use $\tilde\Delta_m(t)$ to bound interval probabilities under tilting and $\Delta_m(t)$ for the subsequent Berry--Esseen estimate. With this notation, the fixed-order expansion takes the following form.

	\begin{theorem}\label{theorem22}
		Assume {\rm (A1)}--{\rm (A4)}.  Use the notation in \eqref{intro-cumulant-polynomial} and \eqref{intro-tilted-definitions}. For every $x$ satisfying
		\begin{equation}\label{mth-range}
			\begin{aligned}
				0\leq x\leq \frac18\biggl[\min\Bigl\{
				T_m,
				\rho_m^{-(m-1)/(m+1)},\vartheta_m^{-1/(m+1)},
				\min_{3\leq k\leq m}
				\Gamma_{k,0}^{-(m-1)/[(k-2)(m+1)]}
				\Bigr\}-1\biggr],
			\end{aligned}
		\end{equation}
		the saddlepoint equation
		\begin{equation}\label{mth-lambda-definition}
			x=\Gamma'(\lambda)
		\end{equation}
		has a unique solution $\lambda \coloneqq\lambda(x)$ in $[0,2x]$. Then, uniformly for $x$ satisfying \eqref{mth-range},
		\begin{equation}\label{mth-explicit-result}
			\begin{aligned}
			\frac{\p(W>x)}{1-\Phi(x)}
			&=\exp\biggl\{\Gamma(\lambda)-\lambda\Gamma'(\lambda)+\frac{\lambda^2}{2}
			+O_m\{\kappa_m(\lambda)\}\biggr\}\frac{1-\Phi(\lambda)}{1-\Phi(x)}\\
			&\quad\times
			\left[1+O_m\{\eta_m(\lambda)+\kappa_m(\lambda)+(1+\lambda)\Delta_m(\lambda)\}\right].
			\end{aligned}
		\end{equation}
	\end{theorem}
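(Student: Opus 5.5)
The proof follows the classical exponential-tilting scheme, with each ingredient extracted from the Stein identity \eqref{mth-Stein}.

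\emph{Step 1: solving the saddlepoint equation.} On $[0,2x]$ we have $\Gamma''(u)=1+O(\eta_m(u))$. The range \eqref{mth-range} forces $\Gamma_{k,0}x^{k-2}$ to be small, since $(m-1)/(m+1)<1$ and $x\le 1/8$ times the corresponding power. Hence $\Gamma'$ is strictly increasing there with $\Gamma'(2x)\ge x$, which gives a unique $\lambda\in[0,2x]$.

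\emph{Step 2: controlling the log-moment generating function.} I will show that
\[
|\psi(t)-\Gamma(t)|\le C_m\kappa_m(t)\quad\text{and}\quad|\psi'(t)-\Gamma'(t)|\le C_m\kappa_m(t+1)
\]
for $|t|\le T_m$ in the range. To do so, apply \eqref{mth-Stein} with $f(w)=e^{tw}$. This gives
\[
\psi'(t)=t+\sum_{j}\Gamma_jt^{j-1}/(j-1)!+\sum_j t^{j-1}\E_t\mathcal R_{m,j}+t^m\E_t\int(\cdots)\,dt.
\]
The remainder terms are bounded by $\sum_j t^{j-1}r_{m,j}(t)+t^m\vartheta_m$, using (A2) and (A3): conditioning on $W$ turns $e^{t(W+u)}$ into $e^{tW}e^{|tu|}$. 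Integrating from $0$ to $t$ and using monotonicity of $r_{m,j}$ gives the bound on $\psi-\Gamma$. Cauchy-type estimates on $[t,t+1]$ handle the derivatives.

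\emph{Step 3: change of measure.} Write
\[
\p(W>x)=e^{\psi(\lambda)-\lambda x}\,\E_\lambda\bigl\{e^{-\lambda(W-x)}\I(W>x)\bigr\}.
\]
Since $x=\Gamma'(\lambda)$, the prefactor equals $\exp\{\Gamma(\lambda)-\lambda\Gamma'(\lambda)+O(\kappa_m(\lambda))\}$. Set $\sigma^2=\Gamma''(\lambda)$ and $Z_\lambda=(W-\Gamma'(\lambda))/\sigma$. If $Z_\lambda$ were exactly standard normal under $\p_\lambda$, the expectation would equal $e^{\lambda^2\sigma^2/2}\{1-\Phi(\lambda\sigma)\}$. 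Replacing $\sigma$ by $1$ costs a factor $1+O(\eta_m(\lambda))$, and this step produces the $e^{\lambda^2/2}\{1-\Phi(\lambda)\}$ factor. The true expectation differs from the Gaussian one by at most $\lambda\int_0^\infty e^{-\lambda\sigma z}|F_\lambda(z)-\Phi(z)|\,dz$ plus the boundary term. A uniform bound $\sup_z|\p_\lambda(Z_\lambda\le z)-\Phi(z)|\le C\Delta_m(\lambda)$, divided by the Gaussian weighted tail integral $e^{\lambda^2/2}(1-\Phi(\lambda))\asymp(1+\lambda)^{-1}$, yields the relative error $(1+\lambda)\Delta_m(\lambda)$. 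The $\kappa_m$ contribution also enters the bracket through the centering error $\psi'(\lambda)-\Gamma'(\lambda)$.

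\emph{Step 4: Berry--Esseen bound under $\p_\lambda$ (the main obstacle).} Substitute $f(w)=e^{\lambda w}g(w)$ into \eqref{mth-Stein} and expand $f^{(k)}$ by the Leibniz rule. The terms with no derivatives on $g$ reproduce $\Gamma'(\lambda)$, so they are absorbed by recentering. Collecting the first-derivative terms gives coefficient $\Gamma''(\lambda)$. This yields the conjugated identity
\[
\E_\lambda\{(W-\Gamma'(\lambda))g(W)\}=\Gamma''(\lambda)\E_\lambda g'(W)+\text{Rem},
\]
where Rem involves $\lambda^{j-1-\ell}\Gamma_j\E_\lambda g^{(\ell)}$ for $\ell\ge2$, the $\mathcal R_{m,j}$ terms, and kernel terms with $g^{(\ell)}$ weighted by $e^{\lambda u}$.

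Take $g$ to be the solution of the normal Stein equation for a smoothed indicator with width $\varepsilon$ of order $\rho_m+\max_k\Gamma_{k,0}^{1/(k-2)}+\vartheta_m^{1/(m-1)}$. Then $\|g^{(\ell)}\|$ is of order $\varepsilon^{1-\ell}$, which explains the factors $\rho_m^{1-\ell}$ in $\Delta_m$. The higher derivatives are only controlled through probabilities of short intervals under $\p_\lambda$. I therefore first prove the tilted concentration inequality
\[
\p_\lambda(a\le W\le b)\le C(b-a)+C\tilde\Delta_m(\lambda)
\]
by the recursive argument of \citet[Proposition~3.1]{CS2004}. That argument uses the same conjugated identity with $g$ a piecewise-linear function, and absorbs the recursive term using the choice of $\varepsilon$ and the range \eqref{mth-range}. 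The exponential moments $\E_\lambda e^{\pm W}$ that arise there are bounded by $e^{2\{\eta_m+\kappa_m\}}$ from Step 2. Feeding the concentration bound into the smoothing-error and remainder estimates then gives the $\Delta_m(\lambda)$ bound. This step carries the bookkeeping of tilt powers against derivative orders. I expect it, and especially closing the recursion inside the range \eqref{mth-range}, to be the hardest part.
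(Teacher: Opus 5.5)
Your architecture matches the paper's: the saddlepoint argument, $|\psi-\Gamma|\le\kappa_m$ from $f(w)=e^{tw}$, and the exact change of measure at $\lambda$, recentered at $\Gamma'(\lambda)=x$ with Gaussian variance $\Gamma''(\lambda)$. A tilted concentration inequality then feeds a smoothed-indicator Stein argument, and integration by parts produces $(1+\lambda)\Delta_m(\lambda)$. The gap is in the step you single out as the crux. A piecewise-linear $g$ is not an admissible test function. \eqref{mth-Stein}, and hence \eqref{mth-gamma-conjugate-stein}, requires $g^{(m-1)}$ to be absolutely continuous. It also involves $\E_tg^{(\ell)}(W)$ for $\ell\le m-1$ and $g^{(m)}(W+u)$ in the kernel terms, and these are meaningless for a function with kinks. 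The Chen--Shao function works only because their identity is first order. What makes a recursion possible here is a $C^m$ replacement. The paper's $\mathfrak f_m$, built from the incomplete-beta profile $Q_m$, satisfies $\I(a\le w\le b)\le\mathfrak f_m'\le1$ and $|\mathfrak f_m|\le(b-a+\epsilon_m)/2$. For $2\le r\le m$ it also satisfies $|\mathfrak f_m^{(r)}|\le A_m\epsilon_m^{1-r}$, with support in $[a-\epsilon_m,a]\cup[b,b+\epsilon_m]$. This support property converts the higher-order cumulant terms and the $e^{tu}$-weighted kernel terms into multiples of $L_t(\epsilon_m)=\sup_z\p_t(z\le W\le z+\epsilon_m)$. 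For the kernel terms this uses $|u|\le\rho_m\ll\epsilon_m$, so shifted supports are covered by a bounded number of intervals of length $\epsilon_m$. The choice \eqref{mth-concentration-length} then makes the resulting coefficients small.

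Second, the recursion does not close on the strength of \eqref{mth-range} alone. Absorbing the $\ell=1$ cumulant term requires $\sum_j\Gamma_{j,0}t^{j-2}/(j-2)!$ to be small compared with $1/A_m$ (see \eqref{mth-cumulant-iteration-first-coefficient}). However, \eqref{mth-range} only yields $\Gamma_{j,0}t^{j-2}\le4^{-(j-2)}$ for $t\le2x$. The paper gets $\Gamma_{j,0}t^{j-2}\le(16A_m)^{-(j-2)}$ by requiring $t\ge(16A_m)^{(m-1)/2}$ in Propositions~\ref{prop-mth-tilted-concentration} and~\ref{prop-mth-tilted-BE}. Together with $t\le\Gamma_{j,0}^{-(m-1)/[(j-2)(m+1)]}$, this forces $\Gamma_{j,0}$ to be small. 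The proof of the theorem therefore splits into two cases. For $\lambda\le C_m$, which covers every case in which $\rho_m$, $\vartheta_m$ or some $\Gamma_{k,0}$ is not small, it uses the original-measure Berry--Esseen bound of Lemma~\ref{prop-mth-original-BE}. That bound has error $C_m\Delta_m(0)\le C_m\Delta_m(\lambda)$, and the deterministic factor is compared with $1$ directly. Only for large $\lambda$ does it tilt. Since you apply the tilted bound for every $\lambda\in[0,2x]$, you need this split or an equivalent smallness reduction.

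Finally, the derivatives of the Stein solution obtained by differentiating the Stein equation grow like $1+|w|^{\ell-1}$, whereas {\rm (A2)} controls only $\E_t|\mathcal R_{m,j}|$. The paper bounds $\E_t\{|\mathcal R_{m,j}||Y_{m,t}|^k\}$ by shifting the tilt to $t\pm(1+t)^{-1}$ and invoking Proposition~\ref{prop-mth-mgf}. This is where $r_{m,j}(t+1)$ and $e^{2\kappa_m(t+1)}$ in $\Delta_m$ come from. You must supply that estimate, or a separately justified uniform bound on $g^{(\ell)}$ for these remainder terms; it does not follow from the concentration inequality.
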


	\begin{remark}\label{rem-iid-main-theorem}
		We apply our result to the i.i.d.\ case to illustrate that the error bound in Theorem~\ref{theorem22} has the optimal order in general. Let $Y_1,\ldots,Y_n$ be i.i.d.\ with $\E Y_1=0$, $\E Y_1^2=1$ and $|Y_1|\leq B$, and set $W_n=n^{-1/2}\sum_{i=1}^nY_i$. Then $\Gamma_j=n^{-(j-2)/2}\operatorname{cum}_j(Y_1)$. Taylor's expansion at $W_n-Y_i/\sqrt n$, using independence and the moment--cumulant relations, gives {\rm (A1)} with $\mathcal R_{m,j}=0$ and $\mathcal K_{m,2}(u)=0$. The remaining kernel is supported on $[-B/\sqrt n,B/\sqrt n]$ and its $L^1$ norm is bounded by $C_{m,B}n^{-(m-1)/2}$ pointwise. Thus {\rm (A2)}--{\rm (A4)} hold with $T_m=\sqrt n$, $\rho_m=B/\sqrt n$, $r_{m,j}(t)=0$, $\vartheta_m=C_{m,B}n^{-(m-1)/2}$ and $\Gamma_{j,0}=C_{j,B}n^{-(j-2)/2}$. Let $\lambda(t)=\sum_{j\geq0}a_jt^j$ be the Cram\'er series associated with $Y_1$ in \eqref{intro-cramer-expansion}. Substituting these bounds into Theorem~\ref{theorem22} and expanding its saddlepoint expression gives
		\[
			\frac{\p(W_n>x)}{1-\Phi(x)}
			=\exp\left\{\sum_{j=0}^{m-3}
			\frac{a_jx^{j+3}}{n^{(j+1)/2}}\right\}
			\left[1+O_{m,B}\left(\frac{1+x}{\sqrt n}
			+\frac{x^{m+1}}{n^{(m-1)/2}}\right)\right],
		\]
		uniformly for $0\leq x\leq c_{m,B}n^{(m-1)/(2m+2)}$ and sufficiently large $n$. This matches the classical error order obtained by truncating \eqref{intro-cramer-expansion}. When $a_{m-2}\ne0$, the first omitted term has order $x^{m+1}n^{-(m-1)/2}$. Retaining terms only up to order three recovers \eqref{intro-third-cumulant-tail-ratio}.
	\end{remark}

	\begin{remark}
		The proof strategy for Theorem \ref{theorem22} also applies when $m=2$, with minor adjustments. In particular, the recursive concentration argument in \eqref{mth-kernel-first-derivative-bound}, which uses $\vartheta_m^{2/(m+1)}\leq\vartheta_m^{1/(m-1)}$, needs to be modified. We also need to adjust the formula for $\tilde{\Delta}_2(t)$ and the conventions for sums over empty index sets. Since this case is not the main focus of the paper, we omit the detailed proof.
	\end{remark}

	\begin{remark}\label{rem-unbounded-support-future}
		Our result can be extended to kernels with unbounded support, but this requires additional assumptions on the kernels. We leave a detailed treatment of this extension to future work.
	\end{remark}

	An identity of the form {\rm (A1)} can be obtained by expanding a classical Stein identity. We give a general third-order construction in Section \ref{subsubsec-third-order-Stein-derivation}. The resulting auxiliary identities will be used in the exchangeable-pair applications below.

\section{Applications}\label{sec3}
	We now apply Theorem \ref{theorem22} to three settings: local dependence, the generalized combinatorial central limit theorem, and the Curie--Weiss model. In each case, we construct a suitable Stein identity and estimate its remainders. For bounded locally dependent sums, nested neighborhoods give a fourth-order expansion that retains the third and fourth cumulants. Exchangeable pairs give a third-order expansion for the generalized combinatorial central limit theorem and a fourth-order expansion for the Curie--Weiss model.

\subsection{Sums of locally dependent random variables}\label{subsec-local-dependence}

Let $\{X_i:i\in\mathcal J\}$ be centered random variables, where $\mathcal J$ is finite, and write $X_A=\{X_i:i\in A\}$ for $A\subset\mathcal J$. Set $W=\sum_{i\in\mathcal J}X_i$, assume $\Var(W)=1$, and let $n=\#(\mathcal J)$. The dependence conditions below allow the neighborhood to grow as more indices enter the expansion.

\medskip \noindent {\rm (LD1)} For each $i\in\mathcal J$, there exists $A_i\subset\mathcal J$ such that $X_i$ and $X_{A_i^c}$ are independent.

\smallskip \noindent {\rm (LD$^*$2)} For $i\in\mathcal J$ and $j\in A_i$, there exists $A_{ij}\supset A_i$ such that $\{X_i,X_j\}$ and $X_{A_{ij}^c}$ are independent.

\smallskip \noindent {\rm (LD$^*$3)} For $i\in\mathcal J$, $j\in A_i$ and $k\in A_{ij}$, there exists $A_{ijk}\supset A_{ij}$ such that $\{X_i,X_j,X_k\}$ and $X_{A_{ijk}^c}$ are independent.

\smallskip \noindent {\rm (LD$^*$4)} For $i\in\mathcal J$, $j\in A_i$, $k\in A_{ij}$ and $l\in A_{ijk}$, there exists $A_{ijkl}\supset A_{ijk}$ such that $\{X_i,X_j,X_k,X_l\}$ and $X_{A_{ijkl}^c}$ are independent.
\medskip

These indexed neighborhoods are also used in refined normal approximation (see \citet{CSZ2026}). Their sizes determine the dependence factors in our error bound. Define a common size bound by
\begin{equation}\label{local-neighborhood-size}
 \kappa\coloneqq
 \max_{\substack{i\in\mathcal J,\ j\in A_i\\k\in A_{ij},\ l\in A_{ijk}}}
 \#(A_{ijkl}).
\end{equation}
By nesting, $\kappa$ also bounds $\#(A_i)$, $\#(A_{ij})$ and $\#(A_{ijk})$. The fourth-order bound below expresses the effect of dependence through this common neighborhood size.

\begin{theorem}\label{theorem31}
Assume {\rm (LD1)} and {\rm (LD$^*$2)}--{\rm (LD$^*$4)}, and let $|X_i|\leq\delta$ for every $i\in\mathcal J$. Write $\Gamma_3=\E W^3$ and $\Gamma_4=\E W^4-3$. There exists an absolute constant $c>0$ such that, uniformly for
\[
 0\leq x\leq c\min\bigl\{(n\kappa^4\delta^5)^{-1/5},
 (n\kappa^3\delta^4)^{-3/10},
 (n\kappa^2\delta^3)^{-3/5}\bigr\},
\]
we have
\begin{equation}\label{bounded-local-fourth-order-result}
 \begin{aligned}
 \frac{\p(W>x)}{1-\Phi(x)}
 &=\exp\left\{\frac{\Gamma_3x^3}{6}
 +\frac{\Gamma_4-3\Gamma_3^2}{24}x^4\right\}\\
 &\quad\times
 \biggl[1+O\biggl\{
 n\kappa^2\delta^3(1+x)+n\kappa^3\delta^4x^3
 +\bigl(n\kappa^4\delta^5+n^3\kappa^6\delta^9\bigr)x^5
 \biggr\}\biggr].
 \end{aligned}
\end{equation}
\end{theorem}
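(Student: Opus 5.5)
We apply Theorem~\ref{theorem22} with $m=4$. The work is in three parts: build a fourth-order Stein identity of the form \eqref{mth-Stein}, check (A2)--(A4) with explicit parameters, and then expand the saddlepoint expression in \eqref{mth-explicit-result} into the closed exponential factor of \eqref{bounded-local-fourth-order-result}.

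\emph{Step 1: the Stein identity.} We start from $\E\{Wf(W)\}=\sum_i\E\{X_if(W)\}$ and peel off the nested neighborhoods one at a time. Put $W^{(i)}=W-\sum_{j\in A_i}X_j$, and define $W^{(ij)}$ and $W^{(ijk)}$ analogously using $A_{ij}$ and $A_{ijk}$. By (LD1), $\E X_if(W^{(i)})=0$, and Taylor expansion gives
\[
\E X_if(W)=\sum_{j\in A_i}\E X_iX_jf'(W^{(i)})+\cdots .
\]
The factor $f'(W^{(i)})$ is then re-expanded around $W^{(ij)}$, using (LD$^*$2) to decouple $\{X_i,X_j\}$ from $W^{(ij)}$. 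We continue through (LD$^*$3) and (LD$^*$4). At each stage we replace $\E X_iX_j\,g(W^{(ij)})$ by $\E(X_iX_j)\,\E g(W^{(ij)})$ and then expand $g(W^{(ij)})$ back around $W$. Collecting the deterministic coefficients gives $\E W^2=1$, $\Gamma_3$ and $\Gamma_4$ in front of $f'$, $f''$ and $f'''$; this is the standard combinatorial identification of cumulants via nested neighborhoods, as in \citet{CSZ2026}.

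The fluctuations are handled as follows. Products such as $X_iX_j-\E X_iX_j$ multiplying $f^{(k)}(W)$ go into $\mathcal R_{4,j}$. The fourth-order Taylor remainders are written as integrals $\int f^{(4)}(W+t)\mathcal K_{4,1}(t)\,dt$ and $\int f^{(4)}(W+Ut)\mathcal K_{4,2}(t)\,dt$. These kernels are supported on $|t|\le 4\kappa\delta$, so $\rho_4\asymp\kappa\delta$, and
\[
\vartheta_4\lesssim n\kappa^4\delta^5\,e^{C|t|\kappa\delta}.
\]

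\emph{Step 2: verifying (A2)--(A4).} Counting terms gives $|\Gamma_3|\lesssim n\kappa^2\delta^3$ and $|\Gamma_4|\lesssim n\kappa^3\delta^4$. For (A2) we need bounds on $\E_t|\mathcal R_{4,j}|$, where each $\mathcal R_{4,j}$ is a centered sum of local products. We split it into variance-type pieces and bound each under $\p_t$ by decoupling. The exponential tilt $e^{tW}$ factorizes approximately as $e^{tW^{(\cdot)}}e^{t(W-W^{(\cdot)})}$, and the second factor is at most $e^{C t\kappa\delta}$, which is bounded in the stated range. A second-moment bound under $\E_t$ with the same decoupling then gives
\[
r_{4,j}(t)\lesssim n\kappa^2\delta^3+\cdots
\]
with the appropriate powers. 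The terms $n^3\kappa^6\delta^9x^5$ should come from $\Gamma_3^2$ appearing through $\eta_4$, $\kappa_4$ and the squared-cumulant contributions in $\Delta_4$.

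We take $T_4$ of order $(\kappa\delta)^{-1}$. Substituting into \eqref{mth-range} then reproduces the three constraints of the theorem:
\[
\vartheta_4^{-1/5},\qquad \Gamma_{4,0}^{-3/10},\qquad \Gamma_{3,0}^{-3/5}.
\]

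\emph{Step 3: expanding the saddlepoint.} From $x=\lambda+\Gamma_3\lambda^2/2+\Gamma_4\lambda^3/6$ we invert to get $\lambda=x-\Gamma_3x^2/2+O(\cdot)$. Substituting,
\[
\Gamma(\lambda)-\lambda\Gamma'(\lambda)+\lambda^2/2+x^2/2=\frac{\Gamma_3x^3}{6}+\frac{\Gamma_4-3\Gamma_3^2}{24}x^4+O\bigl(\Gamma_3\Gamma_4x^5+\Gamma_3^3x^6\bigr).
\]
The ratio $(1-\Phi(\lambda))e^{\lambda^2/2}/\{(1-\Phi(x))e^{x^2/2}\}$ equals $1+O((x-\lambda)/(1+x))$. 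The leftover terms are absorbed into the error using the range restriction; for instance, $\Gamma_3^3x^6\lesssim n^3\kappa^6\delta^9x^5$ when $x\lesssim1$, and otherwise by adjusting within the range. Finally, $\kappa_4(\lambda)$, $\eta_4(\lambda)$ and $(1+\lambda)\Delta_4(\lambda)$ are bounded by the stated error.

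\emph{Main obstacle.} The hardest part is bounding the tilted moments $\E_t|\mathcal R_{4,j}|$ sharply, so that they are of order $n\kappa^2\delta^3$ (and similar) rather than losing factors of $n$. The centered products must be shown to have small variance under $\p_t$. My first approach would be to write
\[
\E_t|Z|\le \bigl(\E_tZ^2\bigr)^{1/2},
\]
expand $\E\{Z^2e^{tW}\}$ as a double sum, and for index pairs far apart use the independence structure together with $e^{tW}$ decoupling, controlling $\E e^{tW^{(\cdot)}}/\E e^{tW}\le e^{Ct\kappa\delta}$.
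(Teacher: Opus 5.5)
There is a genuine gap in Step~1. It makes your ``main obstacle'' impossible to overcome, not just hard. You put centered coefficients such as $X_iX_j-\E(X_iX_j)$ (in front of $f'(W)$), and likewise the centered $f''$-coefficients, into scalar remainders $\mathcal R_{4,j}$, and you plan to bound $\E_t|\mathcal R_{4,j}|$ by decoupled second moments. But Theorem~\ref{theorem22} charges $\lambda^jr_{4,j}(\lambda)$ through $\kappa_4(\lambda)$, and these coefficients are not small under $\p_t$: their tilted \emph{mean} grows linearly in $t$. For $Z=\sum_i\sum_{j\in A_i}\{X_iX_j-\E(X_iX_j)\}$ one has $\E_tZ\approx t\,\E(ZW)$, and $\E(ZW)$ is of the order of $\Gamma_3$ (for independent summands it equals $\sum_i\E X_i^3=\Gamma_3$ exactly). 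Hence $r_{4,2}(t)\gtrsim t|\Gamma_3|$, and Theorem~\ref{theorem22} can only return an error of order at least $|\Gamma_3|\lambda^3$. That is the size of the correction $\Gamma_3x^3/6$ you want to keep. One level up, a centered $f''$-coefficient such as $\sum_i\{X_i^3-\E X_i^3\}$ has tilted mean of order $t\,n\kappa^3\delta^4$ and costs $n\kappa^3\delta^4\lambda^4$, the size of the $\Gamma_4$-correction. In the standard scaling these errors are $n^{-1/2}x^3$ and $n^{-1}x^4$, both unbounded at $x\asymp n^{3/10}$. Since this is a mean effect, no sharper variance bound can help. Your decoupling variant (replace $\E\{X_iX_jg(W^{(ij)})\}$ by $\E(X_iX_j)\E g(W^{(ij)})$, then expand back around $W$) has the same problem if stopped early. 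Expanding back creates random coefficients like $\sum_{i,j}\E(X_iX_j)Y_{ij}$, with $Y_{ij}=\sum_{k\in A_{ij}}X_k$, in front of $f''(W)$, and their tilted mean is again of order $t\,n\kappa^3\delta^4$.

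The missing idea is that no fluctuation may be left below the fourth derivative, and the nested hypotheses are exactly what make this possible. By (LD$^*$2), $\E[\{X_iX_j-\E(X_iX_j)\}f'(W)]=\E[\{X_iX_j-\E(X_iX_j)\}\{f'(W)-f'(W-Y_{ij})\}]$. Taylor expansion turns this into an $f''$-coefficient $Q_{ijk}$ depending only on $X_i,X_j,X_k$. Its centered part is pushed again by (LD$^*$3), and the resulting $f^{(3)}$-coefficient $V_{ijkl}$ is pushed by (LD$^*$4). All remainders then land in a single kernel with the pathwise bound $\int|\mathcal K_{4,1}(u)|du\le Cn\kappa^4\delta^5$, and $\mathcal R_{4,j}=0$. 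So {\rm (A2)} is vacuous and {\rm (A3)} holds because $e^{|tu|}\le e$ on the support; no tilted moment estimate is needed. The deterministic coefficients are identified as $\Gamma_3/2$ and $\Gamma_4/6$ by testing $f(w)=w^2,w^3$.

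Two smaller corrections:
\begin{itemize}
\item With $\rho_4\asymp\kappa\delta$, the ratios $\Gamma_{3,0}\rho_4^{-1}$, $\Gamma_{4,0}\rho_4^{-2}$ and $\vartheta_4\rho_4^{-3}$ appearing in $\Delta_4$ are all of order $n\kappa\delta^2\ge1$, which may be large. You should enlarge the support parameter to $\rho_4=Cn\kappa^2\delta^3\ge\kappa\delta$ (valid since $1=\E W^2\le n\kappa\delta^2$). This makes those ratios bounded, and its range constraint $\rho_4^{-3/5}$ coincides with the $\Gamma_{3,0}^{-3/5}$ one.
\item In Step~3 the first neglected terms are $O\{(|\Gamma_3|^3+|\Gamma_3\Gamma_4|)x^5\}$, not $\Gamma_3^3x^6$; an $x^6$ term of that size could not be absorbed at the top of the range. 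Both terms are at most $Cn^3\kappa^6\delta^9x^5$, because $n\kappa^3\delta^4\le(n\kappa^2\delta^3)^2$.
\end{itemize}
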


\begin{remark}\label{rem-local-higher-order}
Theorem~\ref{theorem31} gives the fourth-order case, but the same construction extends to any fixed order under the corresponding higher-order neighborhood conditions. We obtain the higher-order Stein identities by repeating the centering steps and applications of Taylor's expansion in the proof of Lemma~\ref{lemma-local-fourth-stein}. Since the summands are bounded, the kernel supports, their $L^1$ norms and the cumulants can be controlled directly. The conditions of Theorem~\ref{theorem22} are then straightforward to verify.
\end{remark}

\begin{remark}\label{rem-local-statistics}
Local statistics form a special case of local dependence (see also \citet[Remark~2.3]{LZ2023}). Let $(\xi_\alpha)_{\alpha\in\mathcal I}$ be independent and write $X_i=f_i(\xi_{I_i})$, where $I_i\subset\mathcal I$ and $\xi_{I_i}=(\xi_\alpha)_{\alpha\in I_i}$. We may take $A_i=\{j:I_i\cap I_j\neq\varnothing\}$ and $A_{i_1\cdots i_k}=\bigcup_{\ell=1}^{k}A_{i_\ell}$. The selected vector and the variables outside its neighborhood depend on disjoint sets of independent base variables and are therefore independent. Hence the neighborhood conditions hold at every order without any further independence assumptions. If $\#I_i\leq s$ and each base variable occurs in at most $d$ summands, then $\#A_{i_1\cdots i_k}\leq ksd$. In particular, $\kappa\leq4sd$, while an expansion of any fixed order $r$ has neighborhood sizes bounded by $rsd$. Both $s$ and $d$ may depend on $n$. Theorem~\ref{theorem31} therefore covers the bounded local-statistics setting of \citet[Theorem~2.1]{FLS2020} and extends their expansion to fourth order. When the number of base variables is of order $n$, $s,d=O(1)$ and $\delta\asymp n^{-1/2}$, their third-order estimate has scale $n^{1/4}$, whereas \eqref{bounded-local-fourth-order-result} gives the scale $n^{3/10}$.
\end{remark}

We next apply Theorem \ref{theorem31} to $m$-dependent sequences. No representation in terms of independent base variables is needed here (see also the discussion in \citet[Section~2.2]{FLS2020}). With a slight abuse of notation, we use $m$ for the dependence range in the following corollary and its discussion. Recall that a sequence is $m$-dependent if $(X_1,\ldots,X_j)$ and $(X_{j+m+1},\ldots,X_n)$ are independent whenever both blocks are nonempty.

\begin{corollary}[$m$-dependent sequences]\label{cor-bounded-m-dependent}
Let $m\geq0$ be an integer, and let $X_1,\ldots,X_n$ be centered, $m$-dependent random variables such that $|X_i|\leq\delta$ and $\Var(W)=1$, where $W=\sum_{i=1}^nX_i$. Here $m$ may depend on $n$. Then the conclusion of Theorem \ref{theorem31} holds with $\kappa=5m+1$. In particular, if $\delta\leq Bn^{-1/2}$ for a constant $B>0$, then
\begin{equation}\label{m-dependent-fourth-order-result}
 \begin{aligned}
 \frac{\p(W>x)}{1-\Phi(x)}
 &=\exp\left\{\frac{\Gamma_3x^3}{6}
 +\frac{\Gamma_4-3\Gamma_3^2}{24}x^4\right\}\\
 &\quad\times
 \left[1+O_B\left\{
 \frac{(m+1)^2(1+x)}{\sqrt n}
 +\frac{(m+1)^3x^3}{n}
 +\frac{(m+1)^6x^5}{n^{3/2}}\right\}\right],
 \end{aligned}
\end{equation}
uniformly for $0\leq x\leq c_B n^{3/10}(m+1)^{-6/5}$, where $\Gamma_3=\E W^3$, $\Gamma_4=\E W^4-3$, and $c_B>0$ depends only on $B$.
\end{corollary}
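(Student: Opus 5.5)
We reduce the corollary to Theorem~\ref{theorem31}. The only task is to exhibit neighborhoods satisfying (LD1) and (LD$^*$2)--(LD$^*$4) with $\#(A_{ijkl})\leq 5m+1$; the second display then follows by substituting $\delta\leq Bn^{-1/2}$.

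\emph{Step 1: the neighborhoods.} We use windows around the selected indices, defined recursively by adding a margin of $m$ to the range spanned so far. Put $A_i=\{j:|j-i|\leq m\}\cap\{1,\ldots,n\}$. For $j\in A_i$, let $A_{ij}$ be the integer interval from $\min(i,j)-m$ to $\max(i,j)+m$, intersected with $\{1,\ldots,n\}$. Define $A_{ijk}$ and $A_{ijkl}$ in the same way from $\{i,j,k\}$ and $\{i,j,k,l\}$. Nesting holds because each window contains the previous one.

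\emph{Step 2: independence.} The complement of each window consists of at most two blocks, each at distance more than $m$ from the selected indices. The selected indices lie inside a contiguous block, namely the window minus its margins. We then need that, for an interval $I$, $X_I$ is independent of $X_{L}$ and $X_R$ jointly, where $L$ lies left of $I$ with gap greater than $m$ and $R$ lies right of $I$ with gap greater than $m$. This follows from applying the $m$-dependence definition twice. First, $(X_L,X_I)$ is independent of $X_R$, since $L\cup I$ is contained in an initial segment separated from $R$ by more than $m$. Second, $X_L$ is independent of $X_I$. Together these give mutual independence of $X_L$, $X_I$ and $X_R$. Subvectors of $X_I$, such as $\{X_i,X_j,X_k,X_l\}$, inherit this independence. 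Empty blocks are trivial.

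\emph{Step 3: the size bound.} Since $j\in A_i$, the span of $\{i,j\}$ has length at most $m$. Since $k\in A_{ij}$, adding $k$ extends the span by at most $m$, and likewise for $l$. Hence $\{i,j,k,l\}$ spans at most $3m$ consecutive steps. The final window adds $m$ on each side, so its length is at most $5m$ and $\#(A_{ijkl})\leq 5m+1=\kappa$.

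\emph{Step 4: substitution.} With $\kappa\asymp m+1$ and $\delta\leq Bn^{-1/2}$, the error terms become:
\[
n\kappa^2\delta^3\lesssim (m+1)^2n^{-1/2},\qquad n\kappa^3\delta^4\lesssim (m+1)^3n^{-1},
\]
\[
n\kappa^4\delta^5\lesssim (m+1)^4n^{-3/2},\qquad n^3\kappa^6\delta^9\lesssim (m+1)^6n^{-3/2}.
\]
Since $(m+1)^4\leq(m+1)^6$, the last two terms combine into $(m+1)^6x^5n^{-3/2}$.

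For the admissible range, the three constraints give the following.
\begin{itemize}
\item $(n\kappa^4\delta^5)^{-1/5}\gtrsim n^{3/10}(m+1)^{-4/5}$.
\item $(n\kappa^3\delta^4)^{-3/10}\gtrsim n^{3/10}(m+1)^{-9/10}$.
\item $(n\kappa^2\delta^3)^{-3/5}\gtrsim n^{3/10}(m+1)^{-6/5}$.
\end{itemize}
The minimum is therefore at least $c_Bn^{3/10}(m+1)^{-6/5}$, and the range in the corollary is admissible.

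The main obstacle is the independence verification in Step 2, because the $m$-dependence definition only separates an initial block from a final block. Step 2 handles this by a two-step factorization: independence of the left block from the rest, then of the selected interval from the right block.
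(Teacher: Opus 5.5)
Your proposal is correct and follows essentially the same route as the paper: the same interval windows $[\min_s i_s-m,\max_s i_s+m]\cap[n]$, the same two-step use of $m$-dependence to make the central block $[\min_s i_s,\max_s i_s]$ independent of both exterior blocks, and the same span count giving $\#A_{i_1\cdots i_r}\leq (r+1)m+1\leq 5m+1$. Your Step~4 only writes out the substitution $\kappa\asymp m+1$, $\delta\leq Bn^{-1/2}$ into the error terms and range of Theorem~\ref{theorem31}, which the paper leaves implicit.
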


\begin{proof}[Proof of Corollary \ref{cor-bounded-m-dependent}]
Write $[n]=\{1,\ldots,n\}$. For each admissible tuple of indices, take
\[
 A_{i_1\cdots i_r}
 =\bigl[\min_{1\leq s\leq r}i_s-m,\,
         \max_{1\leq s\leq r}i_s+m\bigr]\cap[n],
 \qquad 1\leq r\leq4.
\]
The block from $\min_s i_s$ to $\max_s i_s$ is separated by more than $m$ from each exterior block in $A_{i_1\cdots i_r}^c$. By $m$-dependence, the left block is independent of the joint vector formed by the central and right blocks, and the latter two blocks are independent. Thus the selected vector is independent of $X_{A_{i_1\cdots i_r}^c}$. The neighborhoods are nested, and each additional selected index increases their span by at most $m$. Thus $\#A_{i_1\cdots i_r}\leq(r+1)m+1$, which verifies {\rm (LD1)} and {\rm (LD$^*$2)}--{\rm (LD$^*$4)} with $\kappa\leq5m+1$. Theorem \ref{theorem31} now gives the result.
\end{proof}

For fixed $m$, Corollary~\ref{cor-bounded-m-dependent} retains the classical fourth-order scale $n^{3/10}$. The same interval neighborhoods satisfy the higher-order local dependence conditions for any fixed tuple length, so the construction also extends to higher orders when $m$ varies with $n$.

	For the next two applications, we construct the Stein identities from exchangeable pairs with bounded increments. A permutation transposition provides the pair for the generalized combinatorial central limit theorem, and a single-spin update does so for the Curie--Weiss model.

	\subsection{Generalized combinatorial central limit theorem}
	Let $n\geq3$, and let $X=\{X_{i,j}:1\leq i,j\leq n\}$ be an array of independent random variables.  Define $a_{i,j}\coloneqq\E X_{i,j}$ and $c_{i,j}^{2}\coloneqq\Var(X_{i,j})$. Assume that the deterministic mean array is centered in every row and column:
	\begin{equation}\label{cclt-row-column}
		\sum_{j=1}^{n}a_{i,j}=0,\quad 1\leq i\leq n,
		\qquad
		\sum_{i=1}^{n}a_{i,j}=0,\quad 1\leq j\leq n,
	\end{equation}
	and that the variance is normalized by
	\begin{equation}\label{cclt-normalization}
		\frac{1}{n-1}\sum_{i=1}^{n}\sum_{j=1}^{n}a_{i,j}^{2}
		+\frac{1}{n}\sum_{i=1}^{n}\sum_{j=1}^{n}c_{i,j}^{2}=1.
	\end{equation}
	Let $\pi$ be a uniform random permutation of $\{1,\ldots,n\}$, independent of $X$, and let
	\begin{equation}\label{cclt-W}
		W=\sum_{i=1}^{n}X_{i,\pi(i)}.
	\end{equation}
	Then \eqref{cclt-row-column} and \eqref{cclt-normalization} imply $\E W=0$ and $\E W^2=1$.  We shall work under the boundedness condition
	\begin{equation}\label{cclt-bounded}
		|X_{i,j}|\leq \delta/12,\qquad 1\leq i,j\leq n.
	\end{equation}

	The classical combinatorial central limit theorem originates with \citet{H1951} for deterministic permutation sums.  In that setting, \citet{G2005} obtained a Berry--Esseen bound by Stein's method and zero-bias coupling, and \citet{CFS2013AOP} proved a Cram\'er-type moderate deviation estimate.  When the array elements are random, Berry--Esseen bounds were established by \citet{HC1978} and \citet{CF2015}, while \citet{LZ2023} obtained Cram\'er-type moderate deviations for unbounded entries. The theorem below retains the third-cumulant correction for bounded entries. Here the exchangeable-pair regression has an additional centered full-array remainder, which is absent for deterministic arrays and requires a separate estimate.

	\begin{theorem}\label{theorem33}
		Assume \eqref{cclt-row-column}--\eqref{cclt-bounded}, and let $\Gamma_3=\E W^3$.  Then there exists an absolute constant $c>0$ such that, uniformly for
		\begin{equation}\label{cclt-proof-range}
			0\leq x\leq c\min\{(n\delta^3)^{-1/2},\,\delta^{-1/2}\},
		\end{equation}
		we have
		\begin{equation}\label{cclt-third-order-result}
			\frac{\p(W>x)}{1-\Phi(x)}
			=e^{\Gamma_3 x^3/6}\left[1+O\{n\delta^3(1+x^2)\}\right].
		\end{equation}
	\end{theorem}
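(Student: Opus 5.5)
We apply Theorem~\ref{theorem22} with $m=3$. The Stein identity will come from the exchangeable pair $(W,W')$ with $W'=W-X_{I,\pi(I)}-X_{J,\pi(J)}+X_{I,\pi(J)}+X_{J,\pi(I)}$. Here $(I,J)$ is a uniformly chosen pair of distinct indices and $\pi'=\pi\circ(I\,J)$. To keep the pair exchangeable, the entries in the swapped positions are taken from the same independent array $X$.

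\textbf{Step 1: the linear regression.} Using \eqref{cclt-row-column} and the fact that $X$ is independent of $\pi$, we aim for a regression of the form
\[
\E(W-W'\mid \pi,X)=\frac{2}{n-1}\bigl(W+\mathcal R\bigr).
\]
The term $\mathcal R$ is the extra centered full-array remainder. It is built from $n^{-1}\sum_{i,j}(X_{i,j}-a_{i,j})$ and from the row and column sums of $X-a$, and it vanishes for deterministic arrays. We set $\Delta=W-W'$, which satisfies $|\Delta|\leq\delta/3$ by \eqref{cclt-bounded}.

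\textbf{Step 2: the third-order identity.} We run the third-order construction of Section~\ref{subsubsec-third-order-Stein-derivation}. Exchangeability gives
\[
\E\{(W+\mathcal R)f(W)\}=\frac{n-1}{4}\E\{\Delta(f(W)-f(W'))\}.
\]
We Taylor expand $f(W')$ around $W$ up to second order and keep the integral remainder. The resulting coefficients are
\[
\frac{n-1}{4}\E(\Delta^2\mid W)=1+\mathcal R_{3,2},\qquad \frac{n-1}{8}\E(\Delta^3\mid W)=-\frac{\Gamma_3}{2}+\mathcal R_{3,3}.
\]
The constant part of the second expression equals $\Gamma_3/2$ (up to sign): apply the identity to $f(w)=w^2$ and use $\E W=0$, $\E W^2=1$. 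The first-order remainder is $\mathcal R_{3,1}=-\mathcal R$. The kernel $\mathcal K_{3,2}(t)$ is proportional to $(n-1)\Delta^3$ restricted to $t$ between $0$ and $-\Delta$, so $\rho_3=\delta/3$. This verifies (A1). For (A3), the conditional $L^1$ mass of the kernel is at most $Cn\E(|\Delta|^3\mid W)\leq Cn\delta^3$. For (A4) we use $|\Gamma_3|\leq Cn\delta^3$, so we may take $\vartheta_3,\Gamma_{3,0}\asymp n\delta^3$.

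\textbf{Step 3: tilted remainder bounds (A2), the main obstacle.} We need $\E_t|\mathcal R_{3,j}|\leq r_{3,j}(t)$ for $|t|\leq T_3\asymp\min\{(n\delta^3)^{-1/2},\delta^{-1/2}\}$, with $r_{3,j}$ of order $n\delta^3$ up to factors in $t$ that stay bounded in the range. For the variance term $\mathcal R_{3,2}$ and the third-moment term $\mathcal R_{3,3}$ we will write the conditional moments as double sums over $(i,j)$. We then use the bounded-difference exponential concentration for permutation statistics, via the standard bound $\E_t|Y|\leq(\E Y^2e^{2tW})^{1/2}/M(t)$. This requires controlling $\E e^{2tW}/M(t)^2$, which we first get from a crude Stein-based mgf bound $M(t)\leq e^{Ct^2}$. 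A direct second-moment computation for the permutation sums then gives $\E\mathcal R_{3,2}^2\leq C\delta^2$ and $\E\mathcal R_{3,3}^2\leq C(n\delta^3)^2$ (the relevant scale).

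The full-array remainder $\mathcal R$ is the genuinely new piece and the part I expect to be hardest. It is a centered sum of order $n^{-1}\cdot n\delta$ in each independent entry. I would control it by conditioning on $\pi$, writing $W$ as a function of independent entries, and using the independent-variable mgf bound. The aim is $\E_t|\mathcal R|\leq C\delta(1+t)$. Choosing $(I,J)$ in a slightly different way might be needed to make $\mathcal R$ mean zero under the tilt up to $O(t\delta)$.

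\textbf{Step 4: conclusion.} We insert these bounds into Theorem~\ref{theorem22}. The range \eqref{mth-range} with $m=3$ becomes $x\leq c\min\{\delta^{-1/2},(n\delta^3)^{-1/4},\dots\}$. The bound $\kappa_3(\lambda)\leq C(n\delta^3\lambda^2+\delta\lambda^2)$ is then small, and $(1+\lambda)\Delta_3(\lambda)=O\{n\delta^3(1+\lambda)\}$. For $m=3$ the saddlepoint expression expands to
\[
\Gamma(\lambda)-\lambda\Gamma'(\lambda)+\tfrac{\lambda^2}{2}+\log\frac{1-\Phi(\lambda)}{1-\Phi(x)}=\frac{\Gamma_3x^3}{6}+O\{\Gamma_3^2x^4+\Gamma_3x\}.
\]
Here we use $\lambda=x-\Gamma_3x^2/2+O(\Gamma_3^2x^3)$ and $\Gamma_3^2x^4\leq Cn\delta^3x^2$ in the stated range \eqref{cclt-proof-range}. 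Collecting the errors gives the relative error $O\{n\delta^3(1+x^2)\}$ of \eqref{cclt-third-order-result}.
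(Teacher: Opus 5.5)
There is a genuine gap in Step~2, where you identify the cumulant coefficient. Exchangeability gives $\E\Delta^3=0$; this is why $\gamma_{3,3}=0$ in Remark~\ref{remark-exchangeable-third-expansion}. Hence $\frac{n-1}{8}\E(\Delta^3\mid W)$ has mean zero, not $\mp\Gamma_3/2$. Testing $f(w)=w^2$ in your own identity actually gives $\Gamma_3=2\E\{\mathcal R_{3,2}W\}-\E\{\mathcal RW^2\}$. So the skewness is carried by the correlation between the conditional-variance fluctuation $\mathcal R_{3,2}$ and $W$, not by $\Delta^3$.

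With your definitions, $|\E\mathcal R_{3,3}|=|\Gamma_3|/2$, so (A2) forces $r_{3,3}(t)\geq|\Gamma_3|/2$ for every $t$. Also, to first order in $t$, $\E_t\mathcal R_{3,2}\approx t\Gamma_3/2$. Inserted into Theorem~\ref{theorem22}, this has two consequences.
\begin{itemize}
\item $\kappa_3(\lambda)\geq|\Gamma_3|\lambda^3/2$, which is the same size as the correction $\Gamma_3x^3/6$ you want to retain.
\item $\Delta_3(\lambda)\geq 2r_{3,3}(\lambda+1)\rho_3^{-1}\geq 3|\Gamma_3|/\delta$ for $\rho_3=\delta/3$. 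When $|\Gamma_3|\asymp n\delta^3$ this is of order $n\delta^2\geq c$, so the tilted Berry--Esseen bound says nothing.
\end{itemize}
Your Step~4 bound $\kappa_3(\lambda)\leq C(n\delta^3+\delta)\lambda^2$ hides this, because it drops the terms $\lambda^3r_{3,3}+\lambda^4\vartheta_3$ of \eqref{mth-kappa}.

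Separately, the $L^1$ mass of your kernel is at most $\frac{n-1}{24}\E(\Delta^4\mid\cdot)\lesssim n\delta^4$, not $n\delta^3$. With $\vartheta_3\asymp n\delta^3$, the range in \eqref{mth-range} is only $(n\delta^3)^{-1/4}$, as you note, and this is not \eqref{cclt-proof-range}.

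What is missing is a second layer of expansion that removes the $W$-dependence from the scalar coefficients. The paper never puts the kernel-mass fluctuation $\hat L$ into $\mathcal R_{3,2}$. Instead it writes
\[
\E\{\hat Lf'(W)\}=\E\int f''(W+u)\hat K_1(u)\,du+\E\{R_1f'(W)\},
\]
using couplings $\pi_{i,j},\pi_{\mathbf i,\mathbf j}$ with prescribed matches, under which the selected entries are independent of the remaining sum. The coefficient $\gamma_{3,1}=2\E\int\hat K_1$ then satisfies $|\gamma_{3,1}-\Gamma_3|\leq C\delta^3$, and $\E_t|R_1|\leq C(n^{-1}+\delta^2)$. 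The centered mass $\hat L_1$ is expanded once more, using also the three-match coupling $\pi_{\mathbf r,\mathbf s}$, giving $\hat K_4$ and $\E_t|R_4|\leq C\delta^3$. The mean-zero term $\hat L_4=-\frac1{8n}\sum_{\mathbf i}D_{\mathbf i}^3$ is moved into a kernel $\hat K_5$ by exchangeability.

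The result is $r_{3,2}\lesssim\Gamma_{3,0}^2$, $r_{3,3}\lesssim\Gamma_{3,0}^3$ and $\vartheta_3\lesssim\delta^2+n\delta^4\lesssim\Gamma_{3,0}^2$. The paper also takes $\rho_3\asymp\Gamma_{3,0}$, deliberately larger than the true support $\delta$, so that the factors $\rho_3^{-1},\rho_3^{-2}$ in $\Delta_3$ are harmless. Only then do $\Delta_3(t)\lesssim\Gamma_{3,0}+\Gamma_{3,0}^2(1+t^3)$ and the range $(n\delta^3)^{-1/2}$ follow.

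Finally, your device $\E_t|Y|\leq(\E Y^2e^{2tW})^{1/2}/M(t)$ loses a factor $e^{\{\psi(2t)-2\psi(t)\}/2}\approx e^{t^2/2}$, which is unbounded on the range. The paper instead bounds $\E_tR^2$ directly by conditioning on $\pi$, since entries not matched by $\pi$ are untouched by the tilt.
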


	\begin{remark}
		Compared with the second-order result for unbounded array elements in Theorem 3.1 of \citet{LZ2023}, \eqref{cclt-third-order-result} retains the third-cumulant correction under \eqref{cclt-bounded}.  Under the standard scale $\delta=O(n^{-1/2})$, this extends the range of $x$ from $O(n^{1/6})$ to $O(n^{1/4})$.
	\end{remark}

\subsection{Curie--Weiss model}\label{subsec-cw-model}
The Curie--Weiss model couples all spins through a mean-field interaction. Its exponential-family Gibbs distribution on the spin configurations $\Sigma=(\sigma_1,\ldots,\sigma_n)\in\{-1,1\}^n$ is given by

\begin{equation}\label{cw-density}
 Z_{\beta,h}^{-1}\exp\biggl\{\frac{\beta}{n}
 \sum_{1\leq i<j\leq n}\sigma_i\sigma_j
 +\beta h\sum_{i=1}^n\sigma_i\biggr\}.
\end{equation}

Here $Z_{\beta,h}$ is the normalizing constant, $\beta>0$ is the inverse temperature, and $h\in\mathbb R$ is the external field. Write $S=\sum_{i=1}^n\sigma_i$ for the total magnetization.

Classical concentration and limit results were obtained by \citet{E1985} and \citet{EN1978A,EN1978B}. Cram\'er-type moderate deviations were proved by \citet{CFS2013AOP} and extended to a cubic Curie--Weiss model by \citet{EICHELSBACHER2024}. For the classical model, \citet{Z2023} extended the zero-field range to $x=O(n^{1/2})$. Berry--Esseen bounds were obtained by \citet{SZ2019} for the Curie--Weiss model and by \citet{DM2023} for the Ising model.

Throughout this subsection, $\beta$ and $h$ are fixed, with either $0<\beta<1$ and $h\in\mathbb R$, or $\beta\geq1$ and $h\ne0$. With a slight abuse of notation, let $m$ denote the unique solution of the mean-field equation in \eqref{cw-normalization} satisfying $mh\geq0$. Under the Gibbs law \eqref{cw-density}, we normalize the magnetization by

\begin{equation}\label{cw-normalization}
 m=\tanh(\beta m+\beta h),\qquad
 W=\frac{S-nm}{\sigma},\qquad
 \sigma^2=\frac{n(1-m^2)}{1-\beta(1-m^2)}.
\end{equation}

We apply Theorem \ref{theorem22} at order four, retaining the cumulants $\Gamma_3=\E(W-\E W)^3$ and $\Gamma_4=\E(W-\E W)^4-3\{\Var(W)\}^2$. The normalization in \eqref{cw-normalization} gives $\E W=O_{\beta,h}(n^{-1/2})$ and $\Var(W)=1+O_{\beta,h}(n^{-1})$ by Lemma \ref{lemma-cw-moment-expansion}, rather than exact centering and unit variance. These discrepancies contribute to the error term in the following theorem.

\begin{theorem}\label{theorem32}
 Assume that either $0<\beta<1$ and $h\in\mathbb R$, or $\beta\geq1$ and $h\ne0$, with both parameters fixed. Use the Gibbs law \eqref{cw-density} and the normalization \eqref{cw-normalization}. There exists $c=c(\beta,h)>0$ such that, uniformly for $0\leq x\leq cn^{3/10}$,

 \begin{equation}\label{cw-fourth-order-result}
  \frac{\p(W>x)}{1-\Phi(x)}
  =\exp\left\{\frac{\Gamma_3x^3}{6}
  +\frac{\Gamma_4-3\Gamma_3^2}{24}x^4\right\}
  \left[1+O_{\beta,h}\left\{
   \frac{1+x}{\sqrt n}+\frac{(1+x)^3}{n}
   +\frac{(1+x)^5}{n^{3/2}}
  \right\}\right].
 \end{equation}

\end{theorem}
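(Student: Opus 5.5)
The plan is to apply Theorem~\ref{theorem22} with $m=4$ to the recentred variable $\tilde W=(W-\E W)/\sqrt{\Var(W)}$. The proof then converts the saddlepoint expression into the displayed exponential factor in $x$, and finally transfers the estimate back to $W$.

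\textbf{Step 1: a fourth-order Stein identity.} I will use the exchangeable pair $(\Sigma,\Sigma')$ obtained by choosing $I$ uniformly and resampling $\sigma_I$ from its conditional law given the other spins. The increment is $W-W'=(\sigma_I-\sigma_I')/\sigma$, so $|W-W'|\le 2/\sigma=O(n^{-1/2})$.

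The regression $\E(W-W'\mid\Sigma)$ is computed from $\tanh(\beta m_i+\beta h)$, where $m_i=(S-\sigma_i)/n$. Expanding $\tanh$ around $\beta m+\beta h$ and using the mean-field equation \eqref{cw-normalization} gives
\[
\E(W-W'\mid\Sigma)=\frac{1-\beta(1-m^2)}{n}\bigl(W+a_2W^2/\sqrt n+a_3W^3/n+\text{remainder}\bigr).
\]
Following the general third-order construction of Section~\ref{subsubsec-third-order-Stein-derivation}, pushed one order further, I Taylor-expand $\E\{(W-W')(f(W)-f(W'))\}$ to order four. I then replace the conditional moments $\E\{(W-W')^k\mid\Sigma\}$ by their deterministic leading values plus centred random parts, and identify the deterministic coefficients with $\Gamma_3$ and $\Gamma_4$.

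This yields (A1) with the following data:
\begin{itemize}
\item $\rho_4=2/\sigma\asymp n^{-1/2}$;
\item kernels whose conditional $L^1$ norms are $O(n^{-3/2})$, so (A3) holds with $\vartheta_4=O(n^{-3/2})$ for $|t|\le T_4=cn^{1/2}$;
\item remainders $\mathcal R_{4,j}$ consisting of polynomials in $W$ with small coefficients, together with centred fluctuation terms of the spin average.
\end{itemize}

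\textbf{Step 2: bounds on the remainders (A2) and cumulants (A4).} I need $\E_t|\mathcal R_{4,j}|\le r_{4,j}(t)$ with $r_{4,j}(t)\lesssim n^{-1/2}(1+t)^{a}$ and suitable powers. For this I use tilted moment bounds $\E_t|W|^k\le C(1+t)^k$ for $|t|\le cn^{1/2}$. These come from the explicit Gibbs form: tilting by $e^{tW}$ only shifts the external field by $t/(\beta\sigma)=O(n^{-1/2}t)$, so the tilted law is again a Curie--Weiss law with a nearby field. Its magnetization concentrates at the shifted mean-field solution, which moves by $O(t/\sqrt n)$.

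The cumulant bounds $|\Gamma_3|\le Cn^{-1/2}$ and $|\Gamma_4|\le Cn^{-1}$ follow from Lemma~\ref{lemma-cw-moment-expansion}, or from the same Stein identity applied with polynomial test functions. Substituting these quantities into \eqref{mth-range}, the range is $\min\{n^{1/2},n^{3/10},n^{3/10},n^{3/10}\}$, which gives $x\le cn^{3/10}$.

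For the error terms I check:
\begin{itemize}
\item $\kappa_4(\lambda)=O\{(1+\lambda)^5 n^{-3/2}+\text{terms of order } \lambda^j r_{4,j}\}$;
\item $\eta_4(\lambda)=O\{\lambda n^{-1/2}+\lambda^2n^{-1}\}$;
\item $(1+\lambda)\Delta_4(\lambda)=O\{(1+\lambda)n^{-1/2}+(1+\lambda)^3n^{-1}\}$.
\end{itemize}
Together these match the bracket in \eqref{cw-fourth-order-result}.

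\textbf{Step 3: algebra and transfer.} I solve $x=\Gamma'(\lambda)$ perturbatively to get $\lambda=x-\Gamma_3x^2/2+O(\cdot)$. Expanding $\Gamma(\lambda)-\lambda\Gamma'(\lambda)+\lambda^2/2$ together with the Mills-ratio quotient $(1-\Phi(\lambda))/(1-\Phi(x))$ produces
\[
\exp\Bigl\{\frac{\Gamma_3x^3}{6}+\frac{\Gamma_4-3\Gamma_3^2}{24}x^4\Bigr\},
\]
with errors $O(x^5n^{-3/2}+x^2n^{-1}+\dots)$ that are absorbed into the stated bracket.

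The transfer from $\tilde W$ to $W$ changes $x$ by $O(n^{-1/2}+x n^{-1})$. By the smoothness of the ratio $(1-\Phi)$ and of the exponential factor, this costs a relative error of $O\{(1+x)n^{-1/2}+x^2n^{-1}\}$. The cumulants of $\tilde W$ differ from $\Gamma_3$ and $\Gamma_4$ as defined only through the variance normalization, which changes them by $O(n^{-3/2})$ and $O(n^{-2})$; this is harmless.

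\textbf{Main obstacle.} I expect the hardest part to be verifying (A2) uniformly up to $t\asymp n^{3/10}$. This requires controlling the tilted expectations of the centred conditional-moment fluctuations. In the case $\beta\ge1$, $h\ne0$, the low-temperature regime, I must also make sure the tilt never pushes the system toward the competing metastable state. My first attempt will be the field-shift representation above combined with a large-deviation estimate showing that the opposite well has tilted mass $e^{-cn}$, which is negligible against all polynomial error terms.
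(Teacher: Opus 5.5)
Your outer architecture matches the paper's: standardize $W$, use the single-spin exchangeable pair, get tilted moments from the field shift $h\mapsto h+t/(\beta\sigma)$, then apply Theorem~\ref{theorem22} with $m=4$ and transfer back. The paper makes the field shift uniform via a Gaussian representation over a compact field interval avoiding $0$, which settles your metastability concern. The gap is Step~1: as described, it does not yield {\rm (A1)}--{\rm (A2)} with usable $r_{4,j}$.

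\emph{Why the split into deterministic means plus centred parts fails.} Splitting the conditional moments of $\Delta=W-W'$ this way and leaving the centred parts in $\mathcal R_{4,j}$ breaks down at every derivative slot; \eqref{cw-fourth-Wf} is exactly the Taylor expansion you describe.
\begin{itemize}
\item \emph{The $f''$ slot.} The term is $-\sigma^{-2}\E\{(W-R)f''(W)\}$. Its coefficient has mean zero because $\E W=\E R$, so the deterministic $f''$-coefficient is $0$, not $\Gamma_3/2$. The centred part $-\sigma^{-2}(W-R)$ has tilted size of order $n^{-1}(1+t)$. It therefore adds $\lambda^4/n$ to $\kappa_4(\lambda)$ and $(1+\lambda)^2n^{-1/2}$ to $(1+\lambda)\Delta_4(\lambda)$, both outside the stated bracket.
\item \emph{The $f^{(3)}$ slot.} The deterministic coefficient is $\frac{2}{3\sigma^2}(1+\E\hat L)$, with $\hat L$ as in \eqref{cw-basic-kernel}. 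This is not $\Gamma_4/6$ even at leading order: for $h=0$ it is positive, whereas $\Gamma_4\approx-2/\{\sigma^2(1-\beta)^3\}$.
\item \emph{The $f'$ and $f$ slots.} When $m\neq0$, $\hat L\approx c_1W$ and $R\approx d_0+d_2W^2$ with $c_1,d_2\asymp n^{-1/2}$. Under $\p_t$ the variable $W$ sits near $t$, so these have tilted sizes $n^{-1/2}(1+t)$ and $n^{-1/2}(1+t^2)$. That gives $\kappa_4(\lambda)\gtrsim\lambda^3n^{-1/2}$, which is the uncorrected $n^{1/6}$ scale.
\end{itemize}
Moreover, {\rm (A1)} fixes the $f''$ and $f^{(3)}$ coefficients at $\Gamma_3/2$ and $\Gamma_4/6$. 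The mismatches, of orders $n^{-1/2}$ and $n^{-1}$, must then sit in $\mathcal R_{4,3}$ and $\mathcal R_{4,4}$. That makes $r_{4,3}\rho_4^{-1}$ and $r_{4,4}\rho_4^{-2}$ of order one inside $\Delta_4$. So your target $r_{4,j}(t)\lesssim n^{-1/2}(1+t)^a$ is too weak for $j\geq2$. What works, and what Lemma~\ref{lemma-cw-fourth-stein} provides, is $r_{4,1}\lesssim n^{-1/2}+n^{-1}t+n^{-3/2}t^4$, $r_{4,2}\lesssim n^{-1}+n^{-3/2}t^3$, $r_{4,3}\lesssim n^{-3/2}(1+t^2)$ and $r_{4,4}\lesssim n^{-2}(1+t^2)$.

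\emph{The missing idea is a recursive reduction.} Section~\ref{subsubsec-third-order-Stein-derivation} takes decompositions such as \eqref{third-L-K1-decomposition} and \eqref{third-R-K2-decomposition} as hypotheses; for the Curie--Weiss pair, producing them is the real work. The steps are:
\begin{itemize}
\item Expand the $\tanh$ to write $R$ and $\hat L$ as explicit polynomials in $W$ plus small remainders, as in \eqref{cw-fourth-R-local}--\eqref{cw-fourth-L-local}.
\item Feed every polynomial term back through the exchangeable-pair identity, using auxiliary identities for $\E\{Wf'(W)\}$, $\E\{Wf''(W)\}$, $\E\{Wf^{(3)}(W)\}$, $\E\{W^2f(W)\}$, $\E\{W^2f'(W)\}$ and $\E\{W^3f(W)\}$.
\item Iterate until every random coefficient left in the slot of $f^{(j-1)}$ has the order above, and all fourth-derivative terms are collected into the kernel.
\end{itemize}
Only then do the deterministic coefficients $\gamma_{3,n}$ and $\gamma_{4,n}$ emerge. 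They must be identified with the true cumulants well beyond the crude bounds $|\Gamma_3|\lesssim n^{-1/2}$, $|\Gamma_4|\lesssim n^{-1}$, because the mismatch enters $\mathcal R_{4,3}$ and $\mathcal R_{4,4}$. The paper proves errors $O(n^{-3/2})$ and $O(n^{-2})$ in Lemma~\ref{lemma-cw-moment-expansion}. So the main obstacle is not bounding tilted expectations of fluctuations, which is immediate from $\E_t|W|^k\lesssim(1+|t|)^k$. It is organizing the expansion so that no $W$-dependent term of the wrong order survives in a scalar slot.

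Two smaller points. Your bound on $(1+\lambda)\Delta_4(\lambda)$ omits the term $(1+\lambda)^5n^{-3/2}$ coming from $\vartheta_4(t^4+1)$; this is harmless, since it is already in the bracket. For $0\leq x<2|\E W|$, the transferred point $(x-\E W)/\sqrt{\Var(W)}$ may be negative, so that range needs the Berry--Esseen bound of Lemma~\ref{prop-mth-original-BE}.
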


\begin{remark}
 The moment calculation in Lemma \ref{lemma-cw-moment-expansion} yields

 \begin{equation}\label{cw-explicit-cumulants}
  \begin{aligned}
  \Gamma_3&=-\frac{2m}{\sigma\{1-\beta(1-m^2)\}^2}
  +O_{\beta,h}(n^{-3/2}),\\
  \Gamma_4&=\frac{2\{3m^2-1+\beta(1-m^2)(3m^2+1)\}}
  {\sigma^2\{1-\beta(1-m^2)\}^4}
  +O_{\beta,h}(n^{-2}).
  \end{aligned}
 \end{equation}
 \noindent These explicit leading terms may replace $\Gamma_3$ and $\Gamma_4$ in \eqref{cw-fourth-order-result} without changing its error.
\end{remark}

\begin{remark}
 When $0<\beta<1$ and $h=0$, \citet{Z2023} gives a normal-tail approximation without cumulant corrections, with an error bound valid up to $x=O(\sqrt n)$. For $h\neq0$, Proposition 4.3 of \citet{CFS2013AOP} establishes an uncorrected Cram\'er-type moderate-deviation estimate on the $O(n^{1/6})$ scale. For $0<\beta<1$ and arbitrary $h\in\mathbb R$, \citet[Theorem~1.4 and Corollary~1.7(1)]{ADS2026} used the cumulant method to obtain a Cram\'er-type expansion on the $O(\sqrt n)$ scale.

 Theorem \ref{theorem32} gives a fourth-order expansion via a Stein identity and also covers $\beta\geq1$ with $h\ne0$. For nonzero fields, it refines the uncorrected estimate of \citet{CFS2013AOP} and extends its uniform range from $O(n^{1/6})$ to $O(n^{3/10})$.
\end{remark}

	\section{Proof of Theorem \ref{theorem22}}\label{sec4}
	We first state the auxiliary estimates used in the proof, deferring their proofs to Section \ref{sec-auxiliary-proofs}. The proof uses exponential tilting, following the classical argument in \citet[Chapter~8]{PETROV1975}. The higher-order Stein identity provides the estimates needed to carry out this argument under our assumptions. Recall the cumulant polynomial $\Gamma(u)$ from \eqref{intro-cumulant-polynomial} and the tilted measure and logarithmic moment generating function from \eqref{intro-tilted-definitions}. For a tilt parameter $t$ satisfying $\Gamma'(t)=x$, the exact change of measure gives

	\begin{equation}\label{intro-change-measure}
		\p(W>x)
		=e^{\psi(t)-tx}
		\E_t\{e^{-t(W-x)}\I(W>x)\}.
	\end{equation}
	The two factors in \eqref{intro-change-measure} require different estimates. For the exponential factor, we need to control $\psi(t)-\Gamma(t)$. For the expectation, we need a normal approximation under $\p_t$, which we obtain using a concentration inequality and the normal Stein equation. When $x$ is bounded, a Berry--Esseen bound under the original measure will suffice.

	We begin with the exponential factor in \eqref{intro-change-measure}. The following proposition controls $\psi(t)-\Gamma(t)$, with $\kappa_m(x)$ defined in \eqref{mth-kappa}.

\begin{proposition}[Exponential moment inequality under the original measure]\label{prop-mth-mgf}
		Under the assumptions of Theorem \ref{theorem22}, for all $|t|\leq T_m$,
		\begin{equation}\label{mth-mgf-estimate}
			\left|\psi(t)-\Gamma(t)\right|\leq \kappa_m(|t|).
		\end{equation}
\end{proposition}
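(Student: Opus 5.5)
Apply the Stein identity (A1) with $f(w)=e^{tw}$, fix $|t|\leq T_m$, and derive a differential inequality for $\psi$. Since $f^{(j-1)}(w)=t^{j-1}e^{tw}$, dividing \eqref{mth-Stein} by $M(t)$ gives
\[
\psi'(t)=\frac{\E\{We^{tW}\}}{M(t)}
=t+\sum_{j=3}^m\frac{\Gamma_j t^{j-1}}{(j-1)!}
+\sum_{j=1}^m t^{j-1}\E_t\mathcal R_{m,j}
+\frac{t^m}{M(t)}\E\int_{|u|\le\rho_m}e^{tW}\bigl[e^{tu}\mathcal K_{m,1}(u)+e^{tUu}\mathcal K_{m,2}(u)\bigr]du .
\]
The first two terms are exactly $\Gamma'(t)$. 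So $\psi'(t)-\Gamma'(t)$ equals a remainder $\mathcal E(t)$, and the task is to bound $|\mathcal E(s)|$ for $|s|\le|t|$ and integrate from $0$, using $\psi(0)=\Gamma(0)=0$.

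Before using the identity we must check that all expectations are finite. Boundedness of $W$ is not assumed, so finiteness of $M(t)$ has to come from the identity itself. I would first apply it with truncated functions, say $f(w)=e^{t(w\wedge N)}$ smoothed, or $f(w)=e^{tw}$ times a cutoff. That gives an a priori bound on $\E e^{tW}$ uniform in $N$, via the same differential inequality. Then let $N\to\infty$ by monotone convergence. This technical step is probably where the most care is needed. Alternatively, I would argue that the statement implicitly assumes finiteness, as in (A2), whose tilted expectations presuppose $M(t)<\infty$.

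The remainder is bounded term by term.

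\emph{$\mathcal R$-terms.} By (A2), $|s^{j-1}\E_s\mathcal R_{m,j}|\le |s|^{j-1}r_{m,j}(s)\le |t|^{j-1}r_{m,j}(t)$, using monotonicity and evenness of $r_{m,j}$.

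\emph{Kernel terms.} Condition on $W$ and use $e^{su}\le e^{|su|}$ and $e^{sUu}\le e^{|su|}$. Then (A3) gives
\[
\Bigl|\E\{e^{sW}\textstyle\int\cdots\}\Bigr|\le\E\bigl\{e^{sW}\E[\textstyle\int e^{|su|}(|\mathcal K_{m,1}|+|\mathcal K_{m,2}|)du\mid W]\bigr\}\le\vartheta_m M(s).
\]
This is exactly why (A3) is conditional on $W$. The $U$-dependence is harmless, since the derived kernel may depend on $U$ but the bound $e^{|su|}$ is uniform. Hence $|\mathcal E(s)|\le\sum_j|t|^{j-1}r_{m,j}(t)+|t|^m\vartheta_m$ for $|s|\le|t|$.

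Finally, integrate from $0$ to $t$:
\[
|\psi(t)-\Gamma(t)|\le\int_0^{|t|}|\mathcal E(s)|\,ds\le\sum_{j=1}^m|t|^jr_{m,j}(|t|)+|t|^{m+1}\vartheta_m=\kappa_m(|t|).
\]
This uses differentiability of $\psi$ on $(-T_m,T_m)$, which follows from finiteness of $M$ on a slightly larger interval or from a one-sided argument. The only delicate point, as noted, is justifying the application of (A1) to the exponential.
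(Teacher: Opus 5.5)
Your proposal is correct and is essentially the paper's proof: substitute $f(w)=e^{tw}$ into {\rm (A1)}, identify $\psi'(t)-\Gamma'(t)$ with the remainder terms, bound these using {\rm (A2)} (with evenness and monotonicity of $r_{m,j}$) and the conditional bound {\rm (A3)}, and integrate from $0$ using $\psi(0)=\Gamma(0)=0$. The paper keeps the factors $1/j$ and $1/(m+1)$ coming from $\int_0^{|t|}v^{j-1}r_{m,j}(v)\,dv$ and $\int_0^{|t|}v^m\vartheta_m\,dv$, but your cruder bound already equals $\kappa_m(|t|)$.

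On finiteness, your second option is the right one, and it is what the paper uses implicitly. {\rm (A2)} presupposes $M(t)<\infty$ on $[-T_m,T_m]$, which makes $\psi$ differentiable on the open interval; the paper then handles the endpoints by continuity of $M$. The truncation route is unnecessary. It is also not directly supported by {\rm (A2)}, which only controls expectations under genuine tilts.
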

	To obtain a normal approximation under $\p_t$, we first rewrite the Stein identity under that measure. The substitution $g(w)=e^{tw}f(w)$ introduces binomial sums of derivatives, which we denote as follows. For $0\leq k\leq m$, define $f_{k,t}(w)\coloneqq\sum_{\ell=0}^{k}\binom{k}{\ell}t^{k-\ell}f^{(\ell)}(w)$. Substituting $g(w)$ into \eqref{mth-Stein}, dividing by $M(t)$, and rearranging, we obtain
	\begin{equation}\label{mth-conjugate-stein}
		\E_t f'(W)=\E_t\{(W-t)f(W)\}-\sum_{j=3}^{m}\frac{\Gamma_j}{(j-1)!}\E_t f_{j-1,t}(W)-\mathfrak R_{m,t}(f),
	\end{equation}
	where we define
	\begin{align}\label{mth-conjugate-remainder}
		\mathfrak R_{m,t}(f)
		&\coloneqq
		\sum_{j=1}^{m}\E_t\{\mathcal R_{m,j}f_{j-1,t}(W)\}
		+\E_t\int_{|u|\leq\rho_m}e^{tu}f_{m,t}(W+u)\mathcal K_{m,1}(u)du\\
		&\quad+\E_t\int_{|u|\leq\rho_m}e^{tUu}f_{m,t}(W+Uu)\mathcal K_{m,2}(u)du. \notag
	\end{align}
	Using $\Gamma'(t)-t=\sum_{j=3}^{m}\Gamma_jt^{j-1}/(j-1)!$, we rewrite \eqref{mth-conjugate-stein} as
	\begin{align}\label{mth-gamma-conjugate-stein}
		\E_t f'(W)
		={}&\E_t\{(W-\Gamma'(t))f(W)\}
		-\sum_{j=3}^{m}\frac{\Gamma_j}{(j-1)!}
		\E_t\{f_{j-1,t}(W)-t^{j-1}f(W)\}
		-\mathfrak R_{m,t}(f).
	\end{align}

	The first term on the right of \eqref{mth-gamma-conjugate-stein} leads us to estimate moments of $W-\Gamma'(t)$ under the tilted measure.

	\begin{lemma}[Centered exponential moment under the tilted measure]\label{lemma-mth-centered-exp}
		Assume {\rm (A1)}--{\rm (A4)}.  Uniformly for $0\leq t\leq T_m-1$,
		\begin{equation}\label{mth-centered-exp-bound}
			\E_te^{\pm(W-\Gamma'(t))}
			\leq e^{1/2}\exp\{2[\eta_m(t+1)+\kappa_m(t+1)]\}.
		\end{equation}
		Consequently, for $1\leq k\leq m$,
		\begin{equation}\label{mth-centered-moment-bound}
			\E_t|W-\Gamma'(t)|^k
			\leq C_m\exp\{2[\eta_m(t+1)+\kappa_m(t+1)]\}.
		\end{equation}
	\end{lemma}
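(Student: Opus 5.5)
The plan is to write the tilted exponential moment as a ratio of original moment generating functions and then apply Proposition \ref{prop-mth-mgf} twice. By the definition \eqref{intro-tilted-definitions},
\[
\E_te^{\pm(W-\Gamma'(t))}=\exp\bigl\{\psi(t\pm1)-\psi(t)\mp\Gamma'(t)\bigr\}.
\]
Since $0\leq t\leq T_m-1$, we have $|t\pm1|\leq T_m$. Proposition \ref{prop-mth-mgf} then replaces $\psi(t\pm1)$ and $\psi(t)$ by $\Gamma(t\pm1)$ and $\Gamma(t)$, at a cost of at most $\kappa_m(|t\pm1|)+\kappa_m(t)$. Here I use that $\kappa_m$ is increasing on $[0,T_m]$, which holds because each $r_{m,j}$ is even and increasing on $[0,T_m]$. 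Note also that $|t-1|\leq t+1$ for $t\geq0$. Together these bound the cost by $2\kappa_m(t+1)$.

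It remains to control the deterministic part $\Gamma(t\pm1)-\Gamma(t)\mp\Gamma'(t)$. Taylor's formula with integral remainder gives
\[
\Gamma(t\pm1)-\Gamma(t)\mp\Gamma'(t)=\int_0^1(1-s)\Gamma''(t\pm s)\,ds.
\]
We have $\Gamma''(u)=1+\sum_{j=3}^m\Gamma_ju^{j-2}/(j-2)!$. By (A4), $|\Gamma''(u)|\leq 1+\eta_m(|u|)$, and for $|u|\leq t+1$ this is at most $1+\eta_m(t+1)$, because $\eta_m$ has nonnegative coefficients. Integrating against $(1-s)$ yields the bound $\tfrac12\{1+\eta_m(t+1)\}$. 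Combining the two estimates gives
\[
\E_te^{\pm(W-\Gamma'(t))}\leq e^{1/2}\exp\{\eta_m(t+1)/2+2\kappa_m(t+1)\},
\]
which is stronger than \eqref{mth-centered-exp-bound}.

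For \eqref{mth-centered-moment-bound}, I will use the elementary inequality $|y|^k\leq k!\,e^{|y|}\leq k!\,(e^{y}+e^{-y})$. Applying it with $y=W-\Gamma'(t)$ and invoking \eqref{mth-centered-exp-bound} for both signs gives the claim with $C_m=2\,m!\,e^{1/2}$.

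There is no real obstacle in this argument. The only points needing care are the legitimacy of the tilt and the monotonicity of $\kappa_m$. For the tilt, $t\pm1$ must lie in $[-T_m,T_m]$ so that Proposition \ref{prop-mth-mgf} applies; this also implies $M(t\pm1)<\infty$. Monotonicity of $\kappa_m$ is what converts $\kappa_m(|t-1|)$ into $\kappa_m(t+1)$.
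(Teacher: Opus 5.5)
Your proposal is correct and follows the paper's proof essentially step for step. The paper also writes $\log\E_te^{s(W-\Gamma'(t))}=\psi(t+s)-\psi(t)-s\Gamma'(t)$ for $s=\pm1$, applies Proposition~\ref{prop-mth-mgf} together with the monotonicity of $\kappa_m$, bounds the Taylor remainder $\int_0^1(1-v)\Gamma''(t+vs)\,dv$ by $\tfrac12\{1+\eta_m(t+1)\}$, and deduces the moment bound from $|y|^k\leq C_k(e^y+e^{-y})$.
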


	The centered moment bound controls the first term in \eqref{mth-gamma-conjugate-stein}, but the higher derivatives require control of short-interval probabilities. For the statement of the concentration inequality, let $B(m,m)=\int_0^1v^{m-1}(1-v)^{m-1}dv$ be the beta function evaluated at $(m,m)$. For $0\leq s \leq 1$, define
	\begin{equation}\label{mth-smoothing-functions}
		q_m(s)\coloneqq B(m,m)^{-1}\int_0^s v^{m-1}(1-v)^{m-1}dv,\qquad
		Q_m(s)\coloneqq\int_0^s q_m(v)dv.
	\end{equation}
	Then $q_m(1)=1$, $q_m^{(r)}(0)=q_m^{(r)}(1)=0$ for $1\leq r\leq m-1$, and $Q_m(1)=1/2$.  Define
	\begin{equation*}
		A_m\coloneqq\max\biggl\{1,
		\max_{2\leq r\leq m}\sup_{0\leq s\leq1}|Q_m^{(r)}(s)|\biggr\},
	\end{equation*}
	and
	\begin{align}\label{mth-concentration-length}
		\epsilon_m
		\coloneqq
		2^{m+4}A_m\biggl[
		\rho_m+\max_{3\leq j\leq m}\Gamma_{j,0}^{1/(j-2)}
		+\vartheta_m^{1/(m-1)}
		\biggr].
	\end{align}
	
\begin{proposition}[Concentration inequality under the tilted measure]\label{prop-mth-tilted-concentration}
		For $\epsilon_m$ defined in \eqref{mth-concentration-length} and every
		\[
		\begin{aligned}
			(16A_m)^{(m-1)/2}\leq t\leq \min\biggl\{
			T_m-1,\rho_m^{-(m-1)/(m+1)},\vartheta_m^{-1/(m+1)},
			\min_{3\leq k\leq m}\Gamma_{k,0}^{-(m-1)/[(k-2)(m+1)]}\biggr\},
		\end{aligned}
		\]
		under the assumptions of Theorem \ref{theorem22}, uniformly for such $t$ and for all $a<b$,
		\begin{equation}\label{mth-tilted-concentration}
			\begin{aligned}
			\p_t(a\leq W\leq b)
			&\leq
			C_m(b-a+\epsilon_m)\biggl[
			e^{2\{\eta_m(t+1)+\kappa_m(t+1)\}}
			+\sum_{j=1}^{m}t^{j-1}r_{m,j}(t)
			\biggr]\\
			&\quad+C_m\sum_{j=1}^{m}r_{m,j}(t)
			\sum_{\ell=1}^{j-1}t^{j-1-\ell}\epsilon_m^{1-\ell}.
			\end{aligned}
		\end{equation}
		In particular, with $\tilde{\Delta}_m(t)$ defined in \eqref{mth-Delta-tilde-definition}, we have
		\begin{equation}\label{mth-local-concentration}
			\sup_{z\in\mathbb R}\p_t(z\leq W\leq z+\epsilon_m)
			\leq C_m\tilde{\Delta}_m(t).
		\end{equation}
	\end{proposition}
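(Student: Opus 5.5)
We will prove the concentration inequality by adapting the recursive argument of \citet[Proposition~3.1]{CS2004} to the tilted identity \eqref{mth-gamma-conjugate-stein}. Fix $a<b$ and set $\epsilon=\epsilon_m$. We take $f$ with $f'(w)=1$ on $[a,b]$, vanishing far to the left, and with smooth transitions on $[a-\epsilon,a]$ and $[b,b+\epsilon]$ built from the Beta-type function $q_m$ in \eqref{mth-smoothing-functions}. Concretely, we let $f'(w)=q_m((w-a+\epsilon)/\epsilon)$ on the left ramp and $1-q_m((w-b)/\epsilon)$ on the right ramp, and we choose $f$ centred at $(a+b)/2$ so that $|f|\leq (b-a)/2+\epsilon$. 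Then $f\in C^{m}$, $f^{(\ell)}$ is supported on the two ramps of length $\epsilon$ for $2\leq\ell\leq m$, and $\|f^{(\ell)}\|_\infty\leq A_m\epsilon^{1-\ell}$ (this is where $Q_m$ and $A_m$ enter). Since $f'\geq\I_{[a,b]}$, the left side of \eqref{mth-gamma-conjugate-stein} is at least $\p_t(a\leq W\leq b)$.

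Next we bound each term on the right. The first term is at most $\|f\|_\infty\E_t|W-\Gamma'(t)|\leq C_m(b-a+\epsilon)e^{2\{\eta_m(t+1)+\kappa_m(t+1)\}}$ by Lemma~\ref{lemma-mth-centered-exp}. For the cumulant terms, $f_{j-1,t}-t^{j-1}f=\sum_{\ell=1}^{j-1}\binom{j-1}{\ell}t^{j-1-\ell}f^{(\ell)}$. The $\ell=1$ piece is bounded by $t^{j-2}\Gamma_{j,0}$, a small multiple of $\sup_z\p_t(z\leq W\leq z+(b-a)+2\epsilon)$ plus a constant. Each piece with $\ell\geq2$ is bounded by $\Gamma_{j,0}t^{j-1-\ell}A_m\epsilon^{1-\ell}$ times the probability of a ramp of length $\epsilon$. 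The range restriction on $t$, namely $t\leq\Gamma_{k,0}^{-(m-1)/[(k-2)(m+1)]}$, together with $\epsilon\geq 2^{m+4}A_m\Gamma_{j,0}^{1/(j-2)}$, makes every coefficient $\Gamma_{j,0}t^{j-1-\ell}A_m\epsilon^{1-\ell}$ at most $2^{-m-3}$ or so. The remainder $\mathfrak R_{m,t}(f)$ splits in two. The $\mathcal R_{m,j}$ terms give $r_{m,j}(t)\sum_\ell t^{j-1-\ell}\|f^{(\ell)}\|_\infty$, which produces exactly the terms $t^{j-1}r_{m,j}(t)(b-a+\epsilon)$ and $r_{m,j}(t)t^{j-1-\ell}\epsilon^{1-\ell}$ in \eqref{mth-tilted-concentration}. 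For the kernel terms we change measure back, writing $\E_t[\cdot]=\E[e^{tW-\psi(t)}\cdot]$, and condition on $W$ using (A3). Since $f^{(\ell)}(W+u)$ with $\ell\geq 2$ is supported where $W$ lies within $\rho_m$ of a ramp, these terms are bounded by $\vartheta_m t^{m-\ell}\epsilon^{1-\ell}A_m$ times a tilted short-interval probability on an interval of length $\epsilon+2\rho_m$. The smallness condition $t\leq\vartheta_m^{-1/(m+1)}$, combined with $\epsilon\geq\vartheta_m^{1/(m-1)}$, again makes these coefficients small. The $\ell\leq1$ pieces are bounded directly by $\vartheta_m t^{m}(b-a+\epsilon)$, which is absorbed after noting $t^{m+1}\vartheta_m\leq1$.

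The main obstacle is closing the recursion. Let $\Lambda(L)=\sup_z\p_t(z\leq W\leq z+L)$. The bounds above give an inequality of the form $\Lambda(b-a)\leq C_m(b-a+\epsilon)D+E+\tfrac12\Lambda(b-a+4\epsilon)$, where $D$ and $E$ denote the bracketed factors in \eqref{mth-tilted-concentration}. The lower bound $t\geq(16A_m)^{(m-1)/2}$ is used so that the $t$-powers do not spoil this factor $\tfrac12$. We also need the subadditivity $\Lambda(L+4\epsilon)\leq\Lambda(L)+4\Lambda(\epsilon)$, together with an a priori finiteness of $\Lambda$ (it is at most $1$). Iterating, or solving directly for $\Lambda(\epsilon)$ first, yields $\Lambda(\epsilon)\leq C_m\tilde\Delta_m(t)$. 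Substituting this back gives the general interval bound. The delicate points are getting the absorbing constant strictly below $1$ uniformly in the stated range, and handling the exponential weight $e^{tu}$ in the kernel terms, where $|u|\leq\rho_m$ and $t\rho_m\leq1$ keep $e^{tu}$ bounded. Finally, \eqref{mth-local-concentration} follows by taking $b-a=\epsilon_m$, since then $\epsilon_m$ is comparable to the bracket defining $\tilde\Delta_m$.
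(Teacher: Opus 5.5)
Your argument is correct, but it closes the recursion differently from the paper. The paper splits the $\ell=1$ cumulant term as $\E_t\mathfrak f_m'(W)=\p_t(a\le W\le b)+\{\E_t\mathfrak f_m'(W)-\p_t(a\le W\le b)\}$. It moves the first piece, whose coefficient is at most $1/15$, to the left-hand side. Every other recursive contribution is then bounded by ramp probabilities only, with each $\rho_m$-enlarged ramp covered by two intervals of length $\epsilon_m$. This gives $\p_t(a\le W\le b)\le\cdots+\frac{23}{56}L_t(\epsilon_m)$. Taking $b=a+\epsilon_m$ closes the recursion in one step, and substituting \eqref{mth-local-concentration} back yields \eqref{mth-tilted-concentration}.

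You instead bound all recursive contributions, including the whole term $\E_tf'(W)$, by $\theta\Lambda(b-a+4\epsilon)$. The coefficients are at most $1/15$ from the $\ell=1$ cumulant term, at most $2^{-m-3}$ from the cumulant terms with $\ell\ge2$, and at most $1/16$ from the kernel terms with $\ell\ge2$. So indeed $\theta<1/2$, and iterating $k$ times gives
\[
\Lambda(L)\le\sum_{i=0}^{k-1}\theta^i\bigl\{C_m(L+(4i+1)\epsilon)D+E\bigr\}+\theta^k .
\]
Letting $k\to\infty$ gives $\Lambda(L)\le C_m\{(L+\epsilon)D+E\}$ for every $L>0$. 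This is \eqref{mth-tilted-concentration} directly, and $L=\epsilon_m$ gives \eqref{mth-local-concentration}.

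Your route avoids separating the interior of $[a,b]$ from the ramps. The price is worse constants and reliance on the a priori bound $\Lambda\le1$ over arbitrarily long intervals. Note that with the factor $1/2$ as you state it, ``solving directly'' at $L=\epsilon$ via $\Lambda(5\epsilon)\le5\Lambda(\epsilon)$ leaves the coefficient $5/2$, so subadditivity alone does not close it. The iteration does, and so does the direct solve once you use the sharper value $\theta<1/5$.

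Two small corrections. First, the $\ell=1$ kernel term carries the factor $mt^{m-1}$ against $f'$, so it is not $O\{\vartheta_mt^m(b-a+\epsilon)\}$. Bound it as the paper does: $m\vartheta_mt^{m-1}\le m\vartheta_m^{2/(m+1)}\le m\vartheta_m^{1/(m-1)}\le C_m\epsilon_m$.

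Second, the lower bound $t\ge(16A_m)^{(m-1)/2}$ matters because the admissible range is nonempty. This forces $\Gamma_{j,0}^{2/(m+1)}\le(16A_m)^{-(j-2)}$, and hence $\Gamma_{j,0}t^{j-2}\le(16A_m)^{-(j-2)}$, which gives the $1/15$. The weight $e^{tu}$ needs no separate treatment, since {\rm (A3)} already contains $e^{|tu|}$.
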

	\begin{remark}\label{rem-original-exponential-concentration}
		Let $x$ lie in the range of Proposition \ref{prop-mth-tilted-concentration}.  For every $\varepsilon>0$,
		\[
		\p_x(x\leq W\leq x+\varepsilon)
		=\frac{\E\{e^{xW}\I(x\leq W\leq x+\varepsilon)\}}{M(x)}
		\geq\frac{e^{x^2}}{M(x)}
		\p(x\leq W\leq x+\varepsilon).
		\]
		Combining this change of measure with Propositions \ref{prop-mth-mgf} and \ref{prop-mth-tilted-concentration} gives a concentration inequality for $W$ under the original measure, with the exponentially decaying factor $e^{\Gamma(x)-x^2}$. The Stein identity thus gives another way to obtain such an inequality (compare Proposition 3.2 of \citet{FLS2020}).  For related concentration inequalities, see \citet{HC1978,SS2006,S2010}.
	\end{remark}
	By combining the tilted concentration inequality with the normal Stein equation, we obtain the following Berry--Esseen bound.

\begin{proposition}[Berry--Esseen bound under the tilted measure]\label{prop-mth-tilted-BE}
		Define $Y_{m,t}\coloneqq W-\Gamma'(t)$.  Under the assumptions of Theorem \ref{theorem22}, uniformly for $t$ in the range of Proposition \ref{prop-mth-tilted-concentration} such that $t+1\leq T_m$,
		\begin{equation}\label{mth-tilted-BE}
			\sup_{z\in\mathbb R}\left|\p_t(Y_{m,t}\leq z)-\Phi\bigl(z/\sqrt{\Gamma''(t)}\bigr)\right|\leq C_m\Delta_m(t),
		\end{equation}
		where $\Delta_m(t)$ is defined in \eqref{mth-Delta-definition}.
	\end{proposition}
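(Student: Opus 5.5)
We compare $\p_t(Y_{m,t}\leq z)$ with $\Phi(z/\sqrt{\Gamma''(t)})$ through the normal Stein equation with variance $\sigma_t^2\coloneqq\Gamma''(t)$, applied to a smoothed indicator. In the range of Proposition~\ref{prop-mth-tilted-concentration}, $\eta_m(t)$ is small, so $\sigma_t^2\in[1/2,2]$.

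For fixed $z$ we let $h_{z,\epsilon}(y)=1-q_m((y-z)/\epsilon_m)$ on $[z,z+\epsilon_m]$, equal to $1$ to the left and $0$ to the right. Since the derivatives of $q_m$ vanish at both endpoints up to order $m-1$, $h_{z,\epsilon}$ is $C^{m-1}$ with $|h^{(r)}|\leq C_m\epsilon_m^{-r}$. The usual sandwich $h_{z-\epsilon_m,\epsilon}\leq\I(\cdot\leq z)\leq h_{z,\epsilon}$ then reduces \eqref{mth-tilted-BE} to two bounds: $|\E_t h(Y_{m,t})-\E h(\sigma_tZ)|\leq C_m\Delta_m(t)$, plus a Gaussian smoothing error $O(\epsilon_m)$, which is dominated by $\Delta_m(t)$.

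We solve $\sigma_t^2f'(y)-yf(y)=h(y)-\E h(\sigma_tZ)$ and set $g(w)=f(w-\Gamma'(t))$. Identity \eqref{mth-gamma-conjugate-stein} then expresses $\E_t\{Y_{m,t}g(W)-g'(W)\}$ as the sum of the cumulant terms $\sum_j\frac{\Gamma_j}{(j-1)!}\E_t\{g_{j-1,t}-t^{j-1}g\}$ and $\mathfrak R_{m,t}(g)$. In the cumulant terms, the $\ell=1$ binomial contribution is $\sum_j\Gamma_jt^{j-2}/(j-2)!\,\E_tg'=(\Gamma''(t)-1)\E_tg'$. This is exactly what converts $\E_tg'$ into $\sigma_t^2\E_tg'$. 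Hence
\[
\E_th(Y_{m,t})-\E h(\sigma_tZ)=-\sum_{j=3}^m\frac{\Gamma_j}{(j-1)!}\sum_{\ell=2}^{j-1}\binom{j-1}{\ell}t^{j-1-\ell}\E_tg^{(\ell)}(W)-\mathfrak R_{m,t}(g).
\]

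Every remaining term involves $g^{(\ell)}$ for $2\leq\ell\leq m$, and we bound them by a solution-derivative estimate. Differentiating the Stein equation $\ell-1$ times gives
\[
\sigma_t^2f^{(\ell)}=yf^{(\ell-1)}+(\ell-1)f^{(\ell-2)}+h^{(\ell-1)},
\]
so inductively $|f^{(\ell)}(y)|\leq C_m(1+|y|^{\ell})\bigl[1+\sum_{r<\ell}\epsilon_m^{-r}\I(y\in\text{transition band})\bigr]$. We take the uniform $O(1)$ bounds for $f,f'$ and a weighted polynomial bound for the rest.

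The expectations then split as follows.
\begin{itemize}
\item Terms with the $\epsilon_m^{-r}$ factor are bounded by $\epsilon_m^{-r}\sup_z\p_t(z\leq W\leq z+\epsilon_m)\leq C_m\epsilon_m^{1-r}\tilde\Delta_m(t)$, using \eqref{mth-local-concentration}.
\item Polynomial weights are bounded by Lemma~\ref{lemma-mth-centered-exp}, which produces the factor $e^{2\{\eta_m+\kappa_m\}}$.
\end{itemize}
With $\epsilon_m\geq\rho_m$, the cumulant terms then give $\tilde\Delta_m\Gamma_{j,0}\sum_\ell(t^{j-1-\ell}+1)\rho_m^{1-\ell}$, matching the first bracket of $\Delta_m$.

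For $\mathfrak R_{m,t}(g)$ we proceed in three steps.
\begin{enumerate}
\item The $\mathcal R_{m,j}$ terms are bounded via (A2), using $|g^{(\ell)}|$ bounds and a Hölder/tilt shift: $\E_t|\mathcal R|\,|Y|^k$ is controlled by $r_{m,j}(t+1)e^{2\kappa_m(t+1)}$ after moving part of the exponential to the tilt $t+1$. This gives the second line of $\Delta_m$.
\item For the kernel terms, $e^{tu}f_{m,t}(W+u)$ contains $f^{(\ell)}(W+u)$, and $|u|\leq\rho_m\leq\epsilon_m$. The transition band for $W+u$ therefore lies in a band of width $3\epsilon_m$ for $W$. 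We apply (A3) conditionally on $W$ to bound $\E_t\{\int e^{|tu|}|\mathcal K|\,|W\}\leq\vartheta_m$. We then apply the concentration inequality, which yields $\vartheta_m\sum_\ell(t^{m-\ell}+1)\rho_m^{1-\ell}\tilde\Delta_m$ and $\vartheta_m(t^m+1)e^{2\kappa_m}$.
\item The $U$-randomized kernel is handled the same way, since $U$ is independent of everything else.
\end{enumerate}

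I expect the main obstacle to be this bookkeeping of the highest derivative $f^{(m)}$, which carries $\epsilon_m^{-(m-1)}$. Because $h$ is only $C^{m-1}$, $f^{(m)}$ exists merely a.e. We must make sure each such term is paired with a concentration probability contributing $\epsilon_m$, and with a prefactor ($\vartheta_m$ or $\Gamma_{j,0}$) small enough. The choice of $\epsilon_m$ in \eqref{mth-concentration-length} is designed for this purpose.
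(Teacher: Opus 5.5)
Your overall route is the paper's: the Stein equation with variance $\Gamma''(t)$, the substitution $g(w)=f(w-\Gamma'(t))$ into \eqref{mth-gamma-conjugate-stein} with the $\ell=0$ and $\ell=1$ cancellations, the derivative recursion, concentration for the interval terms, and (A3) applied conditionally on $W$ for the kernels. Two deviations are harmless.
\begin{itemize}
\item Smoothing at width $\epsilon_m$ instead of $\rho_m$ works, since $\epsilon_m\leq C_m\tilde\Delta_m(t)$ and $\epsilon_m^{-r}\leq\rho_m^{-r}$.
\item Shifting the tilt by $\pm1$ against the centered weight $e^{\pm Y_{m,t}}$ is also valid. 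The paper instead shifts by $\pm(1+t)^{-1}$ against $|W|^k$, and your version even avoids its factor $t^k$. It must, however, be a genuine change of tilt and not H\"older, because (A2) controls only $\E_t|\mathcal R_{m,j}|$.
\end{itemize}

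\textbf{The gap: the derivative estimate is used in a form you have not established.} You state $|f^{(\ell)}(y)|\leq C_m(1+|y|^{\ell})[1+\sum_{r<\ell}\epsilon_m^{-r}\I(y\in B_z)]$. The top-order piece is then $\epsilon_m^{1-\ell}|y|^{\ell}\I(y\in B_z)$, but you bound it by $\epsilon_m^{1-\ell}\sup_z\p_t(z\leq W\leq z+\epsilon_m)$, silently dropping the polynomial weight.

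This is not justified. The inequality \eqref{mth-local-concentration} is uniform in the location $z$ but gives no decay in $z$. Hence $\sup_z\E_t\{|Y_{m,t}|^{\ell}\I(Y_{m,t}\in B_z)\}$ is not $O(\tilde\Delta_m(t))$ from the tools available. Combining concentration with the exponential moments of Lemma~\ref{lemma-mth-centered-exp} gives only $\tilde\Delta_m(t)\log^{\ell}\{1/\tilde\Delta_m(t)\}$, and Cauchy--Schwarz loses a square root. The same problem affects $f^{(\ell)}(Y_{m,t}+u)$ in the kernel terms.

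There is also an index slip: $\epsilon_m^{-r}\sup_z\p_t(\cdot)\leq C_m\epsilon_m^{-r}\tilde\Delta_m(t)$, not $C_m\epsilon_m^{1-r}\tilde\Delta_m(t)$. The exponent $1-\ell$ you need comes from $r=\ell-1$.

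\textbf{The fix.} Use the finer structure that the recursion $\Gamma''(t)f^{(\ell+1)}=yf^{(\ell)}+\ell f^{(\ell-1)}+h^{(\ell)}$ actually produces. The highest inverse power comes from $h^{(\ell-1)}$ alone, and each factor of $y$ carries one fewer power of $\epsilon_m^{-1}$. Thus
\[
|f^{(\ell)}(y)|\leq C_m(1+|y|^{\ell-1})+C_m\sum_{r=1}^{\ell-1}\epsilon_m^{-r}(1+|y|^{\ell-1-r})\I(y\in B_z),
\]
which is \eqref{mth-BE-derivative-bound} with $\rho_m$ replaced by $\epsilon_m$.
\begin{itemize}
\item Only the unweighted term $r=\ell-1$ is paired with concentration. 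It gives $\epsilon_m^{1-\ell}\tilde\Delta_m(t)\leq\rho_m^{1-\ell}\tilde\Delta_m(t)$.
\item For $r\leq\ell-2$, drop the indicator and apply Lemma~\ref{lemma-mth-centered-exp} to get $C_me^{2\{\eta_m(t+1)+\kappa_m(t+1)\}}\rho_m^{-r}$.
\item This is absorbed into $C_m\rho_m^{1-\ell}\tilde\Delta_m(t)$, because $\rho_m\leq1$ in the range and $\tilde\Delta_m(t)\geq\rho_m e^{2\{\eta_m(t+1)+\kappa_m(t+1)\}}$ by \eqref{mth-Delta-tilde-definition}.
\end{itemize}
Your sketch omits this absorption, and it is what places the polynomial-weight contributions from the cumulant and kernel terms inside the first bracket of $\Delta_m(t)$. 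With these corrections your argument goes through and coincides with the paper's.
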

	To handle bounded values of $x$, we also need a Berry--Esseen bound under the original measure. Proposition \ref{prop-mth-tilted-BE} does not cover $t=0$, so we establish this bound separately using the original Stein identity \eqref{mth-Stein}.

\begin{lemma}[Berry--Esseen bound under the original measure]
\label{prop-mth-original-BE}
		Under the assumptions of Theorem \ref{theorem22},
		\begin{equation}\label{mth-original-BE}
			\sup_{z\in\mathbb R}
			\left|\p(W\leq z)-\Phi(z)\right|
			\leq C_m\Delta_m(0).
		\end{equation}
	\end{lemma}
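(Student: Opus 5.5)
The plan is to run the standard Stein argument for the Kolmogorov distance directly with the original identity \eqref{mth-Stein}, i.e. the $t=0$ case of the tilted argument. We first need a concentration bound under $\p$ itself. Fix $z$ and let $f=f_h$ solve the Stein equation $f'(w)-wf(w)=h(w)-\E h(Z)$, with $h$ a smooth approximation of $\I(w\leq z)$. Because \eqref{mth-Stein} involves derivatives up to order $m$, $f$ must be $m$ times differentiable. We therefore replace the indicator by a smoothed version
\[
h_\epsilon(w)=1-q_m\bigl((w-z)/\epsilon_m\bigr)
\]
on $[z,z+\epsilon_m]$. This function has $m-1$ bounded derivatives of size $\epsilon_m^{-r}$, and it differs from the indicator only on an interval of length $\epsilon_m$. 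Applying \eqref{mth-Stein} to $f_h$ gives
\[
\E h(W)-\E h(Z)=-\sum_{j=3}^m\frac{\Gamma_j}{(j-1)!}\E f^{(j-1)}(W)-\sum_{j=1}^m\E\{\mathcal R_{m,j}f^{(j-1)}(W)\}-(\text{kernel terms}).
\]

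The first step is the concentration inequality at $t=0$. We repeat the recursive argument behind Proposition \ref{prop-mth-tilted-concentration} with $t=0$. Take $f$ equal to a primitive-type function built from $Q_m((w-a)/\epsilon)$, so that $f'$ is about the indicator of $[a,b]$ with smoothed edges and $|f|\leq b-a+\epsilon$. Feeding this $f$ into \eqref{mth-Stein} bounds $\p(a\leq W\leq b)$ by $(b-a+\epsilon_m)(1+\E|W|+\sum_j r_{m,j}(0))$. This comes with lower-order terms $r_{m,j}(0)\epsilon_m^{1-\ell}$. The terms involving $\Gamma_j$ and the kernels contain $f^{(j-1)}$ with $j\geq3$. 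These derivatives are of size $\epsilon_m^{-(j-2)}$ times interval probabilities over windows of length $O(\epsilon_m+\rho_m)$. By the choice \eqref{mth-concentration-length}, $\Gamma_{j,0}\epsilon_m^{-(j-2)}\leq 2^{-(m+4)}$ and $\vartheta_m\epsilon_m^{-(m-1)}$ is small. Taking the supremum over windows then absorbs the recursive term, which gives $\sup_z\p(z\leq W\leq z+\epsilon_m)\leq C_m\tilde\Delta_m(0)$. At $t=0$ we have $\E|W|^k\leq C_m e^{2\{\eta_m(1)+\kappa_m(1)\}}$ by Lemma \ref{lemma-mth-centered-exp}. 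This is valid because $T_m\geq1$.

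The second step is the Berry--Esseen bound. We use the classical bounds $\|f_h\|,\|f_h'\|\leq C$ and $|f_h^{(r)}|\leq C_m\epsilon_m^{1-r}$ for $2\leq r\leq m$. We also use that $f_h^{(r)}$, $r\geq2$, equals $(wf)^{(r-1)}+h^{(r-1)}$ up to bounded smooth parts. For each $j\geq3$ we write $f^{(j-1)}=(wf)^{(j-2)}+h^{(j-2)}$. The smooth part is bounded by moments of $W$. The contribution of $h^{(j-2)}$ is supported on $[z,z+\epsilon_m]$, so it is bounded by $\epsilon_m^{-(j-2)}$ times the interval probability from the first step. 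This yields the terms $\Gamma_{j,0}\rho_m^{1-\ell}\tilde\Delta_m(0)$ in $\Delta_m(0)$, after noting $\epsilon_m\geq\rho_m$. The kernel terms are handled the same way. The integrand $f^{(m)}(W+u)$ with $|u|\leq\rho_m$ requires a concentration bound on a window of length $\epsilon_m+2\rho_m$, which is at most three $\epsilon_m$-windows. The bound (A3) at $t=0$ supplies the conditional $L^1$ control $\vartheta_m$; conditioning on $W$ is what allows the concentration estimate to enter. The $\mathcal R_{m,j}$ terms give $r_{m,j}(0)(1+\sum_r\epsilon_m^{-r})$. These match the second line of \eqref{mth-Delta-definition} at $t=0$, with $r_{m,j}(1)\geq r_{m,j}(0)$ by monotonicity. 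Finally, the smoothing error satisfies $|\p(W\leq z)-\E h(W)|\leq \p(z\leq W\leq z+\epsilon_m)\leq C_m\tilde\Delta_m(0)$. Together with $|\E h(Z)-\Phi(z)|\leq C\epsilon_m$, which is at most $\tilde\Delta_m(0)$, this gives \eqref{mth-original-BE}.

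The main obstacle is the recursive concentration step. There the terms with higher derivatives reintroduce the interval probabilities, and the constants must be tracked so that these can be absorbed. This holds only because $\epsilon_m$ dominates $\Gamma_{j,0}^{1/(j-2)}$ and $\vartheta_m^{1/(m-1)}$ with the explicit factor $2^{m+4}A_m$. If absorption fails for small $\epsilon$, the first attempt would be to bound the concentration function on scale $\epsilon_m$ directly. One covers longer intervals by unions of $\epsilon_m$-windows and solves the resulting linear inequality for the supremum.
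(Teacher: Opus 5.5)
Your argument is correct and has the same two-stage structure as the paper's proof. The first stage is the recursive smoothing argument with $\mathfrak f_m$ applied directly to \eqref{mth-Stein} at $t=0$, which gives concentration on $\epsilon_m$-windows. The second is a smoothed indicator plus the normal Stein equation. Where you differ is in how you control the Stein solution.

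The paper smooths at width $\rho_m$ and uses only the pointwise bounds \eqref{mth-BE-derivative-bound}, obtained by differentiating the Stein equation. These grow like $|w|^{\ell-1}$, so the remainder terms require bounds on $\E\{|\mathcal R_{m,j}|(1+|W|^k)\}$. The paper handles these with $1+|w|^k\leq C_m(e^{w}+e^{-w})$ and Proposition \ref{prop-mth-mgf}, which is why $r_{m,j}(1)$ and $e^{2\kappa_m(1)}$ appear in $\Delta_m(0)$.

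You smooth at width $\epsilon_m$ and use sup-norm bounds $\|f^{(r)}\|_\infty\leq C\|h^{(r-1)}\|_\infty\leq C_m\epsilon_m^{1-r}$ for $2\leq r\leq m$, together with the single splitting $f^{(j-1)}=(wf)^{(j-2)}+h^{(j-2)}$. With these, (A2) at $t=0$ is enough for the remainder terms. The cumulant and kernel terms reduce to $O_m(\tilde\Delta_m(0))$ once you assume $\epsilon_m\leq 1$ without loss of generality, because $\Gamma_{j,0}\epsilon_m^{2-j}$ and $\vartheta_m\epsilon_m^{1-m}$ are bounded by the choice of $\epsilon_m$. You therefore end with $C_m\{\tilde\Delta_m(0)+r_{m,1}(0)\}\leq C_m\Delta_m(0)$, slightly sharper than the statement.

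The cost is that those sup-norm bounds are not classical for $r\geq 3$. Lemma 2.4 of Chen, Goldstein and Shao (2011) covers only $r=2$. For higher $r$ you need the theorem of Daly (Electron.\ J.\ Probab., 2008), $\|f^{(k+2)}\|_\infty\leq 2\|h^{(k+1)}\|_\infty$, and you should cite it. These bounds do not follow from the recursion $f^{(\ell+1)}=wf^{(\ell)}+\ell f^{(\ell-1)}+h^{(\ell)}$. If you had only recursion-type bounds, your step ``the $\mathcal R_{m,j}$ terms give $r_{m,j}(0)(\cdots)$'' would fail. The reason is that (A2) controls $\E|\mathcal R_{m,j}|$ but not $\E\{|\mathcal R_{m,j}||W|^k\}$, so you would need the paper's exponential-moment device.

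The paper's route is self-contained and mirrors the tilted argument of Proposition \ref{prop-mth-tilted-BE}. Yours trades that self-containment for an external derivative estimate, and in return gets a shorter remainder analysis.
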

	We now combine these estimates in \eqref{intro-change-measure}, with $\lambda$ defined by \eqref{mth-lambda-definition}. For bounded $\lambda$, the normal approximation under the original measure is enough. For larger $\lambda$, we use the tilted normal approximation and the exponential-moment estimate.

	\begin{proof}[Proof of Theorem \ref{theorem22}]
		To verify that the saddlepoint is well defined, note that
		\begin{equation}\label{mth-saddlepoint-series-bound}
			\sum_{k=3}^{m}\frac{4^{-(k-2)}}{(k-2)!}
			\leq e^{1/4}-1<\frac12.
		\end{equation}
		Fix $x$ satisfying \eqref{mth-range}.  Since $x\geq0$, the upper bound in \eqref{mth-range} must be nonnegative.  Hence the minimum on the right-hand side of \eqref{mth-range} is at least one.  In particular, $\Gamma_{k,0}\leq1$, $3\leq k\leq m$.  Moreover, for each $3\leq k\leq m$, \eqref{mth-range} implies
		\begin{equation}\label{mth-saddlepoint-cumulant-bound}
			\Gamma_{k,0}(2x)^{k-2}
			\leq4^{-(k-2)}\Gamma_{k,0}^{2/(m+1)}
			\leq4^{-(k-2)}.
		\end{equation}
		Combining \eqref{mth-eta}, \eqref{mth-saddlepoint-series-bound}, and \eqref{mth-saddlepoint-cumulant-bound}, we obtain $|\Gamma''(t)-1|\leq\eta_m(2x)\leq\frac12$ for $0\leq t\leq2x$. Hence $\Gamma'(t)$ is strictly increasing in $t$ on $[0,2x]$.  Since $\Gamma'(0)=0$ and $\Gamma'(2x)=\int_0^{2x}\Gamma''(t)\,dt\geq x$, there exists a unique $\lambda\in[0,2x]$ satisfying \eqref{mth-lambda-definition}.  Finally, \eqref{mth-range} and $\lambda\leq2x$ yield
		\[
			\lambda+1\leq
			\min\Bigl\{
			T_m,
			\rho_m^{-(m-1)/(m+1)},
			\vartheta_m^{-1/(m+1)},
			\min_{3\leq k\leq m}
			\Gamma_{k,0}^{-(m-1)/[(k-2)(m+1)]}
			\Bigr\}.
		\]
		
		Suppose first that $\lambda$ is bounded by a sufficiently large constant depending only on $m$, chosen to be at least $(16A_m)^{(m-1)/2}$.  Since $\Gamma_{k,0}\leq1$, $3\leq k\leq m$, the identity $x=\Gamma'(\lambda)$ implies $|\Gamma'(\lambda)|\leq C_m$, so $x$ is bounded as well.  Hence $1-\Phi(\lambda)$ and $1-\Phi(x)$ are bounded below by a positive constant depending only on $m$, and Lemma \ref{prop-mth-original-BE} implies the required relative normal approximation.  It remains to compare the deterministic factor in \eqref{mth-explicit-result} with one.  Using $x=\Gamma'(\lambda)$ and Proposition \ref{prop-mth-mgf}, we obtain
	\begin{align*}
		&\biggl|
		\exp\biggl\{
		\Gamma(\lambda)-\lambda\Gamma'(\lambda)
		+\frac{\lambda^2}{2}
		+O_m\{\kappa_m(\lambda)\}
		\biggr\}
		\frac{1-\Phi(\lambda)}{1-\Phi(x)}
		-1
		\biggr|(1-\Phi(x))\\
		\leq{}&
		C_m\bigl\{
		\eta_m(\lambda)
		+\kappa_m(\lambda)
		\bigr\}e^{C_m\kappa_m(\lambda)}.
	\end{align*}
	Together with Lemma \ref{prop-mth-original-BE}, this proves the theorem when $\lambda$ is bounded, after increasing the constant $C_m$ in the remainder term.
	
	If $\rho_m$, $\vartheta_m$, or some $\Gamma_{k,0}$, $3\leq k\leq m$, is bounded below by a fixed positive constant depending only on $m$, the corresponding upper bound in \eqref{mth-range} implies $\lambda+1\leq C_m$.  Such cases are therefore covered by the argument for bounded $\lambda$ above.  It remains to consider $\lambda$ sufficiently large and $\rho_m$, $\vartheta_m$, and $\Gamma_{k,0}$, $3\leq k\leq m$, sufficiently small.
	
		The estimates at the beginning of the proof show that $t=\lambda$ satisfies all the upper bounds in Proposition \ref{prop-mth-tilted-concentration}.  The choice of the cutoff between the two cases gives $\lambda\geq(16A_m)^{(m-1)/2}$, so the lower bound in that proposition is also satisfied.  Since $\lambda+1\leq T_m$, Proposition \ref{prop-mth-tilted-BE} applies as well. By the saddlepoint equation $x=\Gamma'(\lambda)$, the two events coincide: $W>x\Longleftrightarrow Y_{m,\lambda}>0$. The change-of-measure identity at $\lambda$ then takes the form
		\begin{align}\label{mth-recovery-start}
			\p(W>x)
			=e^{\psi(\lambda)-\lambda x}
			\E_\lambda\{e^{-\lambda(W-x)}\I(W>x)\}
			=e^{\psi(\lambda)-\lambda x}
			\E_\lambda\{e^{-\lambda Y_{m,\lambda}}\I(Y_{m,\lambda}>0)\}.
		\end{align}
		Moreover, Proposition \ref{prop-mth-tilted-BE}, applied at $t=\lambda$, yields
		\begin{equation*}
			\sup_{y\in\mathbb R}\left|\p_\lambda(Y_{m,\lambda}\leq y)-\Phi\bigl(y/\sqrt{\Gamma''(\lambda)}\bigr)\right|\leq C_m\Delta_m(\lambda).
		\end{equation*}
		To compare the expectation in \eqref{mth-recovery-start} with its Gaussian counterpart, we apply integration by parts on $[0,\infty)$ to the difference of the two Stieltjes integrals.  For $Z\sim N(0,1)$, we obtain
		\begin{align}\label{mth-normal-integral-comparison}
			&\bigl|\E_\lambda\{e^{-\lambda Y_{m,\lambda}}\I(Y_{m,\lambda}>0)\}
			-\E\{e^{-\lambda\sqrt{\Gamma''(\lambda)}Z}\I(Z>0)\}\bigr|\\
			\leq{}&
			\sup_y\left|\p_\lambda(Y_{m,\lambda}\leq y)-\Phi\bigl(y/\sqrt{\Gamma''(\lambda)}\bigr)\right|\notag\\
			&+\lambda\int_{0}^{\infty}e^{-\lambda y}
			\sup_u\left|\p_\lambda(Y_{m,\lambda}\leq u)-\Phi\bigl(u/\sqrt{\Gamma''(\lambda)}\bigr)\right|dy
			\leq C_m\Delta_m(\lambda). \notag
		\end{align}
		Moreover, the Gaussian expectation can be evaluated explicitly:
		\begin{equation}\label{mth-normal-integral-value}
			\E\{e^{-\lambda\sqrt{\Gamma''(\lambda)}Z}\I(Z>0)\}
			=e^{\lambda^2\Gamma''(\lambda)/2}\{1-\Phi(\lambda\sqrt{\Gamma''(\lambda)})\}.
		\end{equation}
		By Mills' ratio and the range condition,
		\begin{equation}\label{mth-normal-integral-asymptotic}
		e^{\lambda^2\Gamma''(\lambda)/2}\{1-\Phi(\lambda\sqrt{\Gamma''(\lambda)})\}
		=e^{\lambda^2/2}\{1-\Phi(\lambda)\}\left[1+O_m\{\eta_m(\lambda)\}\right].
		\end{equation}
		The Gaussian reference term is of order $(1+\lambda)^{-1}$.  Hence, after division by this term, the absolute error in \eqref{mth-normal-integral-comparison} is of order $(1+\lambda)\Delta_m(\lambda)$.  Combining \eqref{mth-normal-integral-comparison}, \eqref{mth-normal-integral-value}, and \eqref{mth-normal-integral-asymptotic}, we obtain
		\begin{align}\label{mth-weighted-normal-tail}
			\E_\lambda\{e^{-\lambda Y_{m,\lambda}}\I(Y_{m,\lambda}>0)\}
			={}&e^{\lambda^2/2}\{1-\Phi(\lambda)\}
			\left[1+O_m\{\eta_m(\lambda)+(1+\lambda)\Delta_m(\lambda)\}\right].
		\end{align}
		Finally, applying Proposition \ref{prop-mth-mgf} at $t=\lambda$ and using $x=\Gamma'(\lambda)$, we obtain
		\begin{align}\label{mth-exponent-replacement}
			\psi(\lambda)-\lambda x+\frac{\lambda^2}{2}
			={}&
			\Gamma(\lambda)-\lambda\Gamma'(\lambda)
			+\frac{\lambda^2}{2}
			+O_m\{\kappa_m(\lambda)\}.
		\end{align}
		Combining \eqref{mth-exponent-replacement} with \eqref{mth-recovery-start} and \eqref{mth-weighted-normal-tail}, and then dividing by $1-\Phi(x)$, proves \eqref{mth-explicit-result}.
		\end{proof}

	\section{Proofs of applications}\label{sec5}

	The dependence structure enters each proof through the Stein identity and its remainder terms. Once these are controlled, Theorem \ref{theorem22} gives the tail approximation in terms of the saddlepoint, which we then express through $x$. The third-order construction below explains how to obtain such identities before we turn to the three applications.

	Throughout this section, for $y\in\mathbb R$, define $H_y(t)\coloneqq\I(-y\leq t\leq0)-\I(0<t\leq-y)$. We shall repeatedly use the elementary difference formula
	\begin{equation}\label{difference-formula}
		f(w)-f(w-y)=\int_{-\infty}^{\infty}f'(w+t)H_y(t)dt.
	\end{equation}
	In each application, the difference formula \eqref{difference-formula} converts the local or exchangeable-pair decomposition into the kernel form used in Section \ref{sec2}.

\subsection{Construction of a third-order Stein identity}
\label{subsubsec-third-order-Stein-derivation}
Let $\E W=0$, $\E W^2=1$ and $\Gamma_3=\E W^3$. For $m=3$, the identity in {\rm (A1)} becomes
\begin{align}\label{third-order-Stein-identity}
 \E\{Wf(W)\}
 &=\E f'(W)+\frac{\Gamma_3}{2}\E f''(W)
   +\sum_{j=1}^{3}\E\{\mathcal R_{3,j}f^{(j-1)}(W)\}\notag\\
 &\quad+\E\int_{|t|\leq\rho_3}f^{(3)}(W+t)\mathcal K_{3,1}(t)dt
   +\E\int_{|t|\leq\rho_3}f^{(3)}(W+Ut)\mathcal K_{3,2}(t)dt.
\end{align}
Starting from a classical Stein identity, we expand three contributions: the fluctuation of the kernel mass, the regression remainder, and the Taylor remainder. Separating their mean coefficients from the remaining terms produces \eqref{third-order-Stein-identity}, with integral remainders involving $f^{(3)}(w)$. All hatted kernels below are independent of $U$ and supported on a common deterministic interval $[-\rho_3,\rho_3]$. This interval must contain the supports introduced by the successive expansions, not just the support of the initial kernel. Every identity is understood for the stated class of test functions whenever the expectations and integrals are finite.

Suppose that a random function $\hat K(u)$ and a random variable $R$ satisfy, for every absolutely continuous $f(w)$,
\begin{equation}\label{intro-general-Stein-identity}
 \E\{Wf(W)\}
 =\E\int_{|u|\leq\rho_3}f'(W+u)\hat K(u)du+\E\{Rf(W)\}.
\end{equation}
Set $\hat L=\int_{|u|\leq\rho_3}\hat K(u)du-1$. Adding and subtracting $\E f'(W)$ in \eqref{intro-general-Stein-identity} gives
\begin{equation}\label{intro-Stein-kernel-mass-decomposition}
 \E\{Wf(W)\}
 =\E f'(W)+\E\{\hat Lf'(W)\}+\E\{Rf(W)\}
 +\E\int_{|u|\leq\rho_3}
       [f'(W+u)-f'(W)]\hat K(u)du.
\end{equation}
Take $f(w)$ to be twice differentiable with an absolutely continuous second derivative.  To expand $\E\{\hat Lf'(W)\}$, suppose that, for every absolutely continuous $g(w)$,
\begin{equation}\label{third-L-K1-decomposition}
 \E\{\hat Lg(W)\}
 =\E\int_{|t|\leq\rho_3}g'(W+t)\hat K_1(t)dt+\E\{R_1g(W)\}.
\end{equation}
Define
\begin{equation}\label{third-L1-definition}
 \gamma_{3,1}=2\E\int_{|t|\leq\rho_3}\hat K_1(t)dt,
 \qquad
 \hat L_1=\int_{|t|\leq\rho_3}\hat K_1(t)dt-\frac{\gamma_{3,1}}2,
\end{equation}
and suppose that, for every absolutely continuous $h(w)$,
\begin{equation}\label{third-L1-K4-decomposition}
 \E\{\hat L_1h(W)\}
 =\E\int_{|t|\leq\rho_3}h'(W+t)\hat K_4(t)dt+\E\{R_4h(W)\}.
\end{equation}
Applying \eqref{third-L-K1-decomposition} with $g(w)=f'(w)$ and \eqref{third-L1-K4-decomposition} with $h(w)=f''(w)$ yields
\begin{align}\label{third-L-fprime-expansion}
 \E\{\hat Lf'(W)\}
 &=\frac{\gamma_{3,1}}2\E f''(W)
   +\E\{R_1f'(W)\}+\E\{R_4f''(W)\}\notag\\
 &\quad+\E\int_{|t|\leq\rho_3}f^{(3)}(W+t)\hat K_4(t)dt
   +\E\int_{|t|\leq\rho_3}f^{(3)}(W+Ut)t\hat K_1(t)dt.
\end{align}
For $\E\{Rf(W)\}$, suppose that
\begin{equation}\label{third-R-K2-decomposition}
 \E\{Rf(W)\}
 =\E\int_{|t|\leq\rho_3}f'(W+t)\hat K_2(t)dt+\E\{R_2f(W)\}.
\end{equation}
Define $\hat L_2=\int_{|t|\leq\rho_3}\hat K_2(t)dt$ and $\hat L_{21}=\int_{|t|\leq\rho_3}t\hat K_2(t)dt$, and suppose that, for every absolutely continuous $g(w)$,
\begin{equation}\label{third-L2-K3-decomposition}
 \E\{\hat L_2g(W)\}
 =\E\int_{|t|\leq\rho_3}g'(W+t)\hat K_3(t)dt+\E\{R_3g(W)\}.
\end{equation}
Set $\gamma_{3,2}=2\E\int_{|t|\leq\rho_3}\hat K_3(t)dt$ and $\hat L_3=\int_{|t|\leq\rho_3}\hat K_3(t)dt-\gamma_{3,2}/2$. Applying \eqref{third-L2-K3-decomposition} with $g(w)=f'(w)$ and Taylor's expansion with integral remainder in \eqref{third-R-K2-decomposition}, we obtain
\begin{align}\label{third-R-refined-expansion}
 \E\{Rf(W)\}
 &=\frac{\gamma_{3,2}}2\E f''(W)
   +\E\{(\hat L_{21}+\hat L_3)f''(W)\}
   +\E\{R_2f(W)\}+\E\{R_3f'(W)\}\notag\\
 &\quad+\E\int_{|t|\leq\rho_3}f^{(3)}(W+Ut)
       [t\hat K_3(t)+(1-U)t^2\hat K_2(t)]dt.
\end{align}
For the last term on the right-hand side of \eqref{intro-Stein-kernel-mass-decomposition}, define
\begin{equation}\label{third-L4-definition}
 \gamma_{3,3}=2\E\int_{|t|\leq\rho_3}t\hat K(t)dt,
 \qquad
 \hat L_4=\int_{|t|\leq\rho_3}t\hat K(t)dt-\frac{\gamma_{3,3}}2,
\end{equation}
and suppose that, for every absolutely continuous $h(w)$,
\begin{equation}\label{third-L4-K5-decomposition}
 \E\{\hat L_4h(W)\}
 =\E\int_{|t|\leq\rho_3}h'(W+t)\hat K_5(t)dt+\E\{R_5h(W)\}.
\end{equation}
Taylor's expansion and \eqref{third-L4-K5-decomposition} with $h(w)=f''(w)$ give
\begin{equation}\label{third-K-Taylor-expansion}
 \begin{aligned}
 &\E\int_{|t|\leq\rho_3}[f'(W+t)-f'(W)]\hat K(t)dt\\
 ={}&\frac{\gamma_{3,3}}2\E f''(W)+\E\{R_5f''(W)\}
   +\E\int_{|t|\leq\rho_3}f^{(3)}(W+t)\hat K_5(t)dt\\
 &{}+\E\int_{|t|\leq\rho_3}f^{(3)}(W+Ut)(1-U)t^2\hat K(t)dt.
 \end{aligned}
\end{equation}
Combining \eqref{intro-Stein-kernel-mass-decomposition}, \eqref{third-L-fprime-expansion}, \eqref{third-R-refined-expansion}, and \eqref{third-K-Taylor-expansion} proves \eqref{third-order-Stein-identity} with
\begin{equation}\label{third-canonical-direct-identification}
 \begin{gathered}
 \mathcal R_{3,1}=R_2,\quad \mathcal R_{3,2}=R_1+R_3,\quad
 \mathcal R_{3,3}=\frac{\gamma_{3,1}+\gamma_{3,2}+\gamma_{3,3}-\Gamma_3}{2}
     +R_4+R_5+\hat L_{21}+\hat L_3,\\
 \mathcal K_{3,1}(t)=\hat K_4(t)+\hat K_5(t),\quad
 \mathcal K_{3,2}(t)=t\{\hat K_1(t)+\hat K_3(t)\}
      +(1-U)t^2\{\hat K(t)+\hat K_2(t)\}.
 \end{gathered}
\end{equation}
The remainder kernels in \eqref{third-canonical-direct-identification} are supported on $[-\rho_3,\rho_3]$, and $|Ut|\leq\rho_3$ throughout the second integral in \eqref{third-order-Stein-identity}.

\begin{remark}\label{remark-exchangeable-third-expansion}
 Suppose that $(W,W')$ is exchangeable and $D=D(W,W')$ is antisymmetric. Let $\mathcal F$ be a $\sigma$-field with respect to which $W$ and $R$ are measurable, and assume $\E\{D\mid\mathcal F\}=\lambda(W-R)$ for some $\lambda>0$. Write $\Delta=W-W'$ and assume $|\Delta|\leq\rho_3$. In \eqref{intro-general-Stein-identity}, take
 \begin{equation}\label{third-exchangeable-initial-kernel}
  \hat K(t)=\frac{D}{2\lambda}
  [\I(-\Delta\leq t\leq0)-\I(0<t\leq-\Delta)].
 \end{equation}
 Since $D\Delta^2$ is antisymmetric, $\gamma_{3,3}=-(2\lambda)^{-1}\E D\Delta^2=0$ and $\hat L_4=-D\Delta^2/(4\lambda)$. Exchangeability gives \eqref{third-L4-K5-decomposition} with
 \begin{equation}\label{third-exchangeable-terminal-kernel}
  \hat K_5(t)=-\frac{D\Delta^2}{8\lambda}
  [\I(-\Delta\leq t\leq0)-\I(0<t\leq-\Delta)],\qquad R_5=0.
 \end{equation}
 Both kernels in \eqref{third-exchangeable-initial-kernel} and \eqref{third-exchangeable-terminal-kernel} vanish outside $[-\rho_3,\rho_3]$.
\end{remark}

\subsection{Proof of Theorem \ref{theorem31}}\label{subsec-proof-theorem31}

For locally dependent sums, the variables in each Taylor coefficient are independent of the sum outside a suitable neighborhood. This allows us to center the coefficient and expand its contribution over the next neighborhood. We use Taylor's expansion directly through the integral remainder involving $f^{(4)}(w)$, collecting the mean terms into the third and fourth cumulants. The resulting identity, stated below, has no scalar remainder terms.

\begin{lemma}\label{lemma-local-fourth-stein}
Under the assumptions of Theorem \ref{theorem31}, define
\begin{equation}\label{local-neighborhood-sums}
 Y_i\coloneqq\sum_{j\in A_i}X_j,\quad
 Y_{ij}\coloneqq\sum_{k\in A_{ij}}X_k,\quad
 Y_{ijk}\coloneqq\sum_{l\in A_{ijk}}X_l,\quad
 Y_{ijkl}\coloneqq\sum_{r\in A_{ijkl}}X_r.
\end{equation}
There is a random kernel $\mathcal K_{4,1}(u)$ supported on $[-\kappa\delta,\kappa\delta]$ such that
\begin{equation}\label{local-fourth-stein}
 \E\{Wf(W)\}
 =\E f'(W)+\frac{\Gamma_3}{2}\E f''(W)
 +\frac{\Gamma_4}{6}\E f^{(3)}(W)
 +\E\int_{|u|\leq\kappa\delta}
 f^{(4)}(W+u)\mathcal K_{4,1}(u)du
\end{equation}
for every three times differentiable $f(w)$ with absolutely continuous $f^{(3)}(w)$, whenever the expectations are finite. Moreover,
\begin{equation}\label{local-fourth-size-bounds}
 |\Gamma_3|\leq Cn\kappa^2\delta^3,\qquad
 |\Gamma_4|\leq Cn\kappa^3\delta^4,\qquad
 \int_{|u|\leq\kappa\delta}|\mathcal K_{4,1}(u)|du
 \leq Cn\kappa^4\delta^5\quad\text{a.s.}
\end{equation}
An explicit kernel and the proof are given in Appendix \ref{appB}.
\end{lemma}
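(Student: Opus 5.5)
We prove Lemma~\ref{lemma-local-fourth-stein} by the classical local-dependence Taylor expansion, iterated through the nested neighborhoods, centering each coefficient at every step. Since $X_i$ is independent of $W-Y_i$ and $\E X_i=0$, we have $\E\{X_if(W-Y_i)\}=0$. Hence
\[
\E\{Wf(W)\}=\sum_i\E\{X_i[f(W)-f(W-Y_i)]\}.
\]
Taylor expansion to order four in $Y_i$ gives
\[
\E\{Wf(W)\}=\sum_i\E\{X_iY_if'(W)\}-\tfrac12\sum_i\E\{X_iY_i^2f''(W)\}+\tfrac16\sum_i\E\{X_iY_i^3f^{(3)}(W)\}+\text{integral remainder in }f^{(4)}.
\]
The remainder is written through the difference formula \eqref{difference-formula} as $\int f^{(4)}(W+u)K(u)\,du$. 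Its kernel is supported on $[-|Y_i|,|Y_i|]\subset[-\kappa\delta,\kappa\delta]$ and has $L^1$ norm $O(|X_i||Y_i|^4)$. Summing over $i$ gives total mass $\le n\kappa^4\delta^5$. To avoid the random $U$ in the kernel (the statement has only $\mathcal K_{4,1}$), I use the form $f^{(3)}(W)-f^{(3)}(W-y)-\dots$ written with deterministic-in-$u$ kernels $(u+y)^k$-type weights, e.g.\ $\int f^{(4)}(W+u)\,\tfrac{(y+u)^3}{6}H_y(u)\,du$ up to sign. Such a kernel is a random function of $(X,Y)$ only.

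Next I handle the coefficients. Write $X_iY_i=\E X_iY_i+(X_iY_i-\E X_iY_i)$. Since $\sum_i\E X_iY_i=\E W^2=1$, the first part gives $\E f'(W)$. For the centered part, (LD$^*$2) makes $\xi_{ij}=X_iX_j-\E X_iX_j$ independent of $W-Y_{ij}$. So
\[
\E\{\xi_{ij}f'(W)\}=\E\{\xi_{ij}[f'(W)-f'(W-Y_{ij})]\},
\]
and I expand this to third order in $Y_{ij}$, leaving an $f^{(4)}$ kernel of mass $\delta^2\cdot(\kappa\delta)^3$ per pair. Summing over $\le n\kappa$ pairs gives $n\kappa^4\delta^5$. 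The new $f''$ and $f^{(3)}$ coefficients, such as $\xi_{ij}X_k$ and $\xi_{ij}X_kX_l$, are again split into mean plus centered part. Each centered part is expanded using (LD$^*$3), and then (LD$^*$4), to the remaining order. For example, the centered $f''$ coefficient from level two is expanded one more step over $A_{ijk}$ to produce $f^{(3)}$ terms, and those are expanded once more over $A_{ijkl}$ to land in $f^{(4)}$. Terms like $X_iY_i^2$ at level one are handled similarly: their mean is kept, and their centered versions, which are independent of $W-Y_{ijk}$ by (LD$^*$3), are expanded. Every leftover is either a deterministic multiple of $f''(W)$ or $f^{(3)}(W)$, or an $f^{(4)}$ kernel supported on $[-\kappa\delta,\kappa\delta]$. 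Here we use $|Y_{ijkl}|\le\kappa\delta$ and the nesting of the neighborhoods. Each kernel has mass at most $(\#\text{index tuples})\cdot\delta^{\#\text{factors}}\kappa^{\cdot}\delta^{\cdot}$, and every such product is $\le n\kappa^4\delta^5$ because each index beyond $i$ contributes a factor $\kappa$ and each extra variable a factor $\delta$, for five variables in total. This yields \eqref{local-fourth-stein} with no scalar remainders.

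It remains to identify the deterministic coefficients, and this is the step needing care. I would not compute them combinatorially. Instead, once an identity of the form
\[
\E\{Wf(W)\}=\E f'(W)+\tfrac{a}{2}\E f''(W)+\tfrac{b}{6}\E f^{(3)}(W)+\E\int f^{(4)}(W+u)\mathcal K_{4,1}(u)\,du
\]
is established with deterministic $a,b$, I test it with polynomials. Taking $f(w)=w^2$ gives $\E W^3=a$, so $a=\Gamma_3$. Taking $f(w)=w^3$ gives $\E W^4=3\E W^2+3a\E W+b=3+b$, so $b=\Gamma_4$; here the kernel term vanishes because $f^{(4)}=0$. The bounds $|a|\le Cn\kappa^2\delta^3$ and $|b|\le Cn\kappa^3\delta^4$ follow directly from the explicit mean sums, e.g.\ $\sum_{i,\,j\in A_i,\,k\in A_{ij}}|\E X_iX_jX_k|$. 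The main obstacle is the bookkeeping in the middle step. One must keep all kernels free of $U$ and check that each centered coefficient is independent of $W$ minus the correct nested neighborhood sum. The latter holds because each coefficient involves only variables indexed in the tuple $(i,j,k,l)$ that defines the next neighborhood.
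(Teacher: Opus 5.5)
Your proposal is correct and follows essentially the same route as the paper. Both start from $\E\{Wf(W)\}=\sum_i\E[X_i\{f(W)-f(W-Y_i)\}]$, center the Taylor coefficients successively over $A_{ij},A_{ijk},A_{ijkl}$ using (LD$^*$2)--(LD$^*$4), use $U$-free remainder kernels of the form $\pm(u+y)^{3-r}H_y(u)/(3-r)!$, and identify the coefficients by testing with $f(w)=w^2$ and $f(w)=w^3$. The only difference is bookkeeping: the paper first aggregates all coefficients of a given derivative into $Q_{ijk}$ and $V_{ijkl}$ and then centers them, whereas you center each summand (e.g.\ each $X_iX_jX_k$ with $j,k\in A_i$) separately, which is equivalent by linearity.
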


\begin{proof}[Proof of Theorem \ref{theorem31}]
Since $1=\E W^2\leq n\kappa\delta^2$ and $\kappa\leq n$, we have
\begin{equation}\label{local-parameter-comparison}
 \kappa\delta\leq n\kappa^2\delta^3,\qquad
 \kappa^2\delta^2\leq n\kappa^3\delta^4\leq n^2\kappa^4\delta^6,
 \qquad (n\kappa^4\delta^5)^{-1/5}\kappa\delta\leq1.
\end{equation}
If $n\kappa^2\delta^3$ is bounded below by a fixed positive constant, the stated range bounds $x$ and the cumulant correction. The conclusion then follows by increasing the absolute constant in the error term, so we assume that $n\kappa^2\delta^3$ is sufficiently small.

Apply Theorem \ref{theorem22} with $m=4$ and
\begin{equation}\label{local-theorem-parameters}
 \begin{aligned}
 &\mathcal R_{4,j}=0\ (1\leq j\leq4),\qquad
 \mathcal K_{4,2}(u)=0,\qquad T_4=(n\kappa^4\delta^5)^{-1/5},\\
 &\rho_4=Cn\kappa^2\delta^3,\qquad \vartheta_4=Cn\kappa^4\delta^5,\qquad
 \Gamma_{3,0}=Cn\kappa^2\delta^3,\qquad\Gamma_{4,0}=Cn\kappa^3\delta^4.
 \end{aligned}
\end{equation}
Lemma \ref{lemma-local-fourth-stein} verifies {\rm (A1)}, {\rm (A2)} and {\rm (A4)}. For $|t|\leq T_4$, \eqref{local-parameter-comparison} bounds $e^{|tu|}$ by $e$ on the actual kernel support. Hence the pathwise bound in \eqref{local-fourth-size-bounds} also verifies {\rm (A3)}. The range in Theorem \ref{theorem31}, with its constant chosen sufficiently small, ensures \eqref{mth-range}.

Let $\lambda$ solve
\begin{equation}\label{local-saddlepoint-equation}
 x=\Gamma'(\lambda)=\lambda+\frac{\Gamma_3}{2}\lambda^2
 +\frac{\Gamma_4}{6}\lambda^3.
\end{equation}
By \eqref{local-parameter-comparison} and \eqref{local-theorem-parameters}, the definitions \eqref{mth-Delta-tilde-definition} and \eqref{mth-Delta-definition} give, for $0\leq t\leq2x$,
\[
 \tilde\Delta_4(t)\leq Cn\kappa^2\delta^3,\qquad
 \Delta_4(t)\leq C\{n\kappa^2\delta^3+n\kappa^3\delta^4(1+t)+n\kappa^4\delta^5(1+t^4)\}.
\]
Consequently, the error quantities in Theorem \ref{theorem22} satisfy
\begin{equation}\label{local-theorem-error-bound}
 \eta_4(\lambda)+\kappa_4(\lambda)+(1+\lambda)\Delta_4(\lambda)
 \leq C\{n\kappa^2\delta^3(1+x)+n\kappa^3\delta^4(1+x)^3+(n\kappa^4\delta^5+n^3\kappa^6\delta^9)(1+x)^5\}.
\end{equation}
The right-hand side is bounded throughout the stated range. Theorem \ref{theorem22} therefore gives
\begin{equation}\label{local-lambda-form}
 \begin{aligned}
 \frac{\p(W>x)}{1-\Phi(x)}
 &=\exp\{\Gamma(\lambda)-\lambda x+\lambda^2/2\}
 \frac{1-\Phi(\lambda)}{1-\Phi(x)}\\
 &\quad\times\bigl[1+O\{n\kappa^2\delta^3(1+x)+n\kappa^3\delta^4(1+x)^3
 +(n\kappa^4\delta^5+n^3\kappa^6\delta^9)(1+x)^5\}\bigr].
 \end{aligned}
\end{equation}
To express \eqref{local-lambda-form} in terms of $x$, expand \eqref{local-saddlepoint-equation} to obtain
\begin{equation}\label{local-saddlepoint-expansion}
 \lambda=x-\frac{\Gamma_3}{2}x^2
 +\left(\frac{\Gamma_3^2}{2}-\frac{\Gamma_4}{6}\right)x^3
 +O(n^3\kappa^6\delta^9x^4).
\end{equation}
Here $\lambda\asymp x$ for $x>0$, and $n\kappa^2\delta^3x$ is uniformly small. Mills' ratio gives
\begin{equation}\label{local-mills-ratio-expansion}
 \log\frac{1-\Phi(\lambda)}{1-\Phi(x)}
 =\frac{x^2-\lambda^2}{2}
 +O\left(\frac{|x-\lambda|}{1+x}\right).
\end{equation}
Combining \eqref{local-saddlepoint-expansion} and \eqref{local-mills-ratio-expansion} yields
\begin{equation}\label{local-exponent-expansion}
 \Gamma(\lambda)-\lambda x+\lambda^2/2
 +\log\frac{1-\Phi(\lambda)}{1-\Phi(x)}
 =\frac{\Gamma_3x^3}{6}
 +\frac{\Gamma_4-3\Gamma_3^2}{24}x^4
 +O\{n\kappa^2\delta^3(1+x)+n^3\kappa^6\delta^9(1+x)^5\}.
\end{equation}
Substitute \eqref{local-exponent-expansion} into \eqref{local-lambda-form}. For $0\leq x\leq1$, the terms $n\kappa^3\delta^4(1+x)^3+(n\kappa^4\delta^5+n^3\kappa^6\delta^9)(1+x)^5$ are bounded by $Cn\kappa^2\delta^3$. For $x\geq1$, replace $(1+x)^3$ and $(1+x)^5$ by $Cx^3$ and $Cx^5$, respectively. These bounds prove \eqref{bounded-local-fourth-order-result}.
\end{proof}

	\subsection{Proof of Theorem \ref{theorem33}}\label{subsec-proof-theorem33}

	Recall the permutation sum $W$ in \eqref{cclt-W}, with $a_{i,j}=\E X_{i,j}$. Write $\bar X_{i,j}\coloneqq X_{i,j}-a_{i,j}$. By \eqref{cclt-bounded}, $|a_{i,j}|\leq\delta/12$ and $|\bar X_{i,j}|\leq\delta/6$.

	Swapping two images of the permutation gives an exchangeable pair. Let $[n]_2\coloneqq\{(i_1,i_2)\in[n]^2:i_1\neq i_2\}$. Let $\mathbf I=(I_1,I_2)$ be chosen uniformly from $[n]_2$ and independently of $(X,\pi)$, and define
	\begin{equation*}
		W'\coloneqq W-X_{I_1,\pi(I_1)}-X_{I_2,\pi(I_2)}
		+X_{I_1,\pi(I_2)}+X_{I_2,\pi(I_1)} .
	\end{equation*}
	Then $(W,W')$ is exchangeable.  Define $\Delta\coloneqq W-W'$.  Set $\lambda=2/(n-1)$, and define $R\coloneqq n^{-1}\sum_{i=1}^{n}\sum_{j=1}^{n}\bar X_{i,j}$. A direct summation using \eqref{cclt-row-column} yields $\E\{\Delta\mid X,\pi\}=\lambda(W-R)$. Hence, by exchangeability,
	\begin{equation}\label{cclt-exchangeable-stein}
		\E\{Wf(W)\}=\frac{1}{2\lambda}\E\{\Delta[f(W)-f(W')]\}+\E\{Rf(W)\}.
	\end{equation}
	For $\mathbf i=(i_1,i_2)\in[n]_2$, define
	\[
	D_{\mathbf i}\coloneqq X_{i_1,\pi(i_1)}+X_{i_2,\pi(i_2)}
	-X_{i_1,\pi(i_2)}-X_{i_2,\pi(i_1)}.
	\]
	Thus $\Delta=D_{\mathbf I}$.  By \eqref{cclt-exchangeable-stein}, we obtain
	\begin{equation}\label{cclt-basic-stein}
		\E\{Wf(W)\}
		=\E\int_{|t|\leq\delta/3}f'(W+t)\hat K(t)dt+\E\{Rf(W)\},
	\end{equation}
	where we define
	\begin{equation}\label{cclt-K}
		\hat K(t)\coloneqq\frac1{4n}\sum_{\mathbf i\in[n]_2}D_{\mathbf i}H_{D_{\mathbf i}}(t).
	\end{equation}

	The kernel-mass fluctuations in \eqref{cclt-basic-stein} still depend on the permutation sum. To expand them, we prescribe selected matches and remove the corresponding array entries, leaving a sum independent of those entries. The changes in the sums determine the kernel supports. The full-array term $R$ is kept as a scalar remainder. The following lemma records the resulting identity and bounds, whose proof is given in Appendix \ref{appC}.

	\begin{lemma}\label{lemma-cclt-third-stein}
		Assume \eqref{cclt-row-column}--\eqref{cclt-bounded}, with $0<\delta\leq1$. There exist a constant $\gamma_{3,1}$, random functions $\hat K_1(t),\hat K_4(t),\hat K_5(t)$, and random variables $R_1,R_4$ such that, on defining
		\begin{alignat*}{2}
			\mathcal R_{3,1}&=R,\qquad
			\mathcal R_{3,2}=R_1,\qquad&
			\mathcal R_{3,3}&=R_4+\frac{\gamma_{3,1}-\Gamma_3}{2},\\
			\mathcal K_{3,1}(t)&=\hat K_4(t)+\hat K_5(t),\qquad&
			\mathcal K_{3,2}(t)&=t\hat K_1(t)+(1-U)t^2\hat K(t),
		\end{alignat*}
		the identity \eqref{third-order-Stein-identity} holds.  Moreover, for $|t|\leq \delta^{-1/2}$,
		\begin{align}
			\E_t|\mathcal R_{3,1}|&\leq C\delta,\label{cclt-R-bound}\displaybreak[1]\\
			\E_t|\mathcal R_{3,2}|&\leq C(n^{-1}+\delta^2),\label{cclt-R1-bound}\displaybreak[1]\\
			\E_t|\mathcal R_{3,3}|&\leq C\delta^3.\label{cclt-R4-bound}
		\end{align}
		Furthermore, $\mathcal K_{3,1}(u)$ and $\mathcal K_{3,2}(u)$ are supported on $[-\delta,\delta]$ and satisfy, for $|t|\leq\delta^{-1/2}$,
		\begin{equation}\label{cclt-kernel-bound}
			\E\biggl\{\int_{|u|\leq\delta}e^{2|tu|}
			\bigl[|\mathcal K_{3,1}(u)|+|\mathcal K_{3,2}(u)|\bigr]du
			\,\bigg|\,W\biggr\}
			\leq C(\delta^2+n\delta^4).
		\end{equation}
	\end{lemma}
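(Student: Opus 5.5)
The plan is to follow the general third-order construction of Section~\ref{subsubsec-third-order-Stein-derivation}, specialised to the transposition exchangeable pair. We start from \eqref{cclt-basic-stein}, which is \eqref{intro-general-Stein-identity} with the kernel \eqref{cclt-K} and scalar remainder $R$. Since $|D_{\mathbf i}|\leq\delta/3$, this kernel is supported on $[-\delta/3,\delta/3]$.

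We do \emph{not} expand $\E\{Rf(W)\}$ further: we keep it as $\mathcal R_{3,1}=R$, which corresponds to setting $\hat K_2=\hat K_3=0$ and $R_2=R$ in \eqref{third-canonical-direct-identification}. The third-order symmetric term is handled by Remark~\ref{remark-exchangeable-third-expansion}. Antisymmetry of $D\Delta^2$ gives $\gamma_{3,3}=0$, and exchangeability gives \eqref{third-L4-K5-decomposition} with $R_5=0$ and $\hat K_5$ as in \eqref{third-exchangeable-terminal-kernel}.

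The genuine work is the kernel-mass fluctuation $\hat L=\int\hat K-1=\frac{1}{4n}\sum_{\mathbf i}D_{\mathbf i}^2-1$. We must produce \eqref{third-L-K1-decomposition} and then \eqref{third-L1-K4-decomposition}.

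For the first decomposition, we write $D_{\mathbf i}^2$ through the four entries it uses. For each pair $\mathbf i$ and each prescribed pair of images $(j_1,j_2)=(\pi(i_1),\pi(i_2))$, we condition on these matches and remove the affected rows and columns. The remaining partial sum $W^{(\mathbf i,\mathbf j)}$ is then independent of the entries $X_{i_\cdot,j_\cdot}$. Here we use a standard coupling: choosing $\pi$ with prescribed values at $i_1,i_2$ via at most two transpositions changes $W$ by at most $O(\delta)$.

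Then $\E\{D^2g(W)\}=\E\{D^2g(W^{(\cdot)})\}+\E\int g'(W+t)\,D^2H_{W-W^{(\cdot)}}(t)\,dt$. Centering $D^2$ against its conditional mean, computed with the row and column constraints \eqref{cclt-row-column}, yields a deterministic part plus a small remainder. The deterministic part reproduces $\E\hat L=0$ up to the $O(n^{-1})$ discrepancy coming from the normalization \eqref{cclt-normalization}. This identifies $\hat K_1$, supported on $[-\delta,\delta]$ with mass $O(n\cdot n^{-1}\delta^2)=O(\delta^2)$ after averaging, and it identifies $R_1$. The bound $\E_t|R_1|\leq C(n^{-1}+\delta^2)$ then comes from the mismatch between $W$ and $W^{(\cdot)}$ together with the $1/n$ centering error.

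We repeat the same matching-and-removal step once more for $\hat L_1=\int\hat K_1-\gamma_{3,1}/2$, which now involves products of up to six entries. This gives $\hat K_4$ with mass $O(n\delta^4+\delta^3)$ and $R_4$. Collecting everything through \eqref{third-canonical-direct-identification} yields the stated $\mathcal R_{3,j}$ and $\mathcal K_{3,j}$.

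For the bounds, we verify the tilted moments \eqref{cclt-R-bound}--\eqref{cclt-R4-bound} with the same device each time. Each remainder is a normalized sum of bounded centered terms that are independent of a reduced sum $W^{(\cdot)}$, and $|W-W^{(\cdot)}|\leq C\delta$. Hence for $|t|\leq\delta^{-1/2}$, writing $e^{tW}=e^{tW^{(\cdot)}}e^{t(W-W^{(\cdot)})}$, the weight $e^{tW}$ changes by at most a factor $e^{C\delta^{1/2}}$, which is bounded. Independence then factorizes the expectation, and $\E_t|\cdot|$ reduces to an untilted $L^1$ or $L^2$ bound times $M$-ratios of order one.

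For $R=n^{-1}\sum_{i,j}\bar X_{i,j}$, this gives $\E_t|R|\leq C(\E R^2)^{1/2}+\text{(tilt correction)}\leq C\delta$. Here we use that $\bar X$ is independent of $\pi$ but not of $W$, so the tilt correction needs care: we write $e^{tW}$ via a leave-row-out decomposition, and the bound on $|t|$ keeps the correction at $O(\delta)$. The kernel bound \eqref{cclt-kernel-bound} follows pathwise, since $e^{2|tu|}\leq e^{2\delta^{1/2}}$ on the support and the masses are $O(\delta^2+n\delta^4)$.

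The main obstacle is the second-level centering that produces $\hat K_4$ and $R_4$ with $\E_t|R_4|=O(\delta^3)$. In that step the permutation constraint couples the prescribed matches to all other rows, and the full-array remainder $R$ interacts with the tilt. I would first try to handle it by conditioning on the set of prescribed images and bounding the correlation between products of entries and the reduced sum via \eqref{cclt-row-column}. If that falls short, I would absorb the excess into $R_1$ at the cost of the $n^{-1}$ term.
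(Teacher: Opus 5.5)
Your construction broadly follows the paper's: you keep $R$ as $\mathcal R_{3,1}$, take $\hat K_5$, $R_5=0$ and $\gamma_{3,3}=0$ from Remark~\ref{remark-exchangeable-third-expansion}, and expand $\hat L$ through prescribed-match couplings. However, the bound $\E_t|\mathcal R_{3,3}|\leq C\delta^3$ is never reached. You leave the second-level expansion of $\hat L_1$ open, and the fallback you propose is not available. The leftover terms from $\E\{\hat L_1h(W)\}$ with $h=f''$ multiply $f''(W)$, so they cannot be absorbed into $R_1$, which multiplies $f'(W)$. They also have to be $O(\delta^3)$, not $O(n^{-1}+\delta^2)$. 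The paper closes this step by writing $\Delta^{(\mathbf i)}_{\mathbf i,\mathbf j}$ out explicitly and expanding its indicator terms with the three-pair coupling $\pi_{\mathbf r,\mathbf s}$. It then bounds the leftover pieces by counting: for each $\mathbf i$ there are exactly $2n-3$ pairs $\mathbf j$ with $\pi(i_1)=j_1$ or $\pi(i_2)=j_2$. Moreover, because $\mathcal R_{3,3}=R_4+(\gamma_{3,1}-\Gamma_3)/2$, you also need $|\Gamma_3-\gamma_{3,1}|\leq C\delta^3$, and nothing in your plan produces this. The paper obtains it by evaluating $\Gamma_3=\E W^3$ explicitly under \eqref{cclt-row-column} and comparing it with $2\E\int\hat K_1$, using $\E\Delta^{(i)}_{i,j}=-a_{i,j}/(n-1)$ and its two-pair analogue.

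Your device for the tilted moments would also fail. The variable $R=n^{-1}\sum_{i,j}\bar X_{i,j}$ involves every entry of the array, as does the full-array sum $\sum_{i,j}(X_{i,j}^2-\E X_{i,j}^2)$ in $R_1$. So no reduced sum close to $W$ is independent of the whole remainder. You must pass to $\E_tR^2$, a sum over about $n^4$ pairs that is $O(\delta^2)$ only because of cancellation. Replacing $e^{tW}$ by $e^{tW^{(\cdot)}}$ up to a factor $e^{C\delta^{1/2}}$ is an inequality for nonnegative integrands. It therefore forces absolute values and yields only $\E_tR^2\leq Cn^2\delta^2$. Reducing instead to untilted $L^2$ norms by Cauchy--Schwarz costs $(\E e^{2tW})^{1/2}/\E e^{tW}$, roughly $e^{t^2/2}$, which is unbounded for $|t|\leq\delta^{-1/2}$.

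The missing idea is exact conditional independence. Given $\pi$, $W$ depends only on the matched entries $X_{i,\pi(i)}$. Tilting by $e^{tW}$ therefore leaves the unmatched entries independent, with their original centered law. Consequently, every off-diagonal pair in $\E_tR^2$ containing an unmatched entry has zero tilted expectation. Pairs in which both entries are matched number exactly $n^2$ pathwise, and there are at most $n^2$ unmatched diagonal terms. Each term is at most $C\delta^2$, so $\E_tR^2\leq C\delta^2$ with no restriction on $t$. The same argument is needed for the full-array sums in $R_1$ and for any full-array part of $\hat L_1$ that you keep as a remainder.
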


	\begin{proof}[Proof of Theorem \ref{theorem33}]
		The moment computation \eqref{cclt-gamma-proof}, together with \eqref{cclt-bounded}, shows that $|\Gamma_3|\leq Cn\delta^3$. Moreover, \eqref{cclt-normalization} and \eqref{cclt-bounded} imply $1\leq Cn\delta^2$.

		If either $n\delta^3$ or $\delta$ is not sufficiently small, the range \eqref{cclt-proof-range} is bounded and $n\delta^3(1+x^2)$ is bounded away from zero.  In this case, \eqref{cclt-third-order-result} follows by increasing the absolute constant in the error term.  We may therefore assume that $n\delta^3$ and $\delta$ are sufficiently small.  In the notation of Theorem \ref{theorem22} with $m=3$, take
		\begin{equation}\label{cclt-third-parameter-choices}
			\begin{alignedat}{4}
			T_3&=\delta^{-1/2},\qquad &
			\Gamma_{3,0}&=Cn\delta^3,\qquad &
			\rho_3&=C\Gamma_{3,0},\\
			r_{3,1}(t)&=C\delta,\qquad &
			r_{3,2}(t)&=C(n^{-1}+\delta^2),\qquad &
			r_{3,3}(t)&=C\delta^3,\qquad &
			\vartheta_3&=C(\delta^2+n\delta^4).
			\end{alignedat}
		\end{equation}
		The constants may be chosen so that $\delta\leq\rho_3$.  Hence Lemma \ref{lemma-cclt-third-stein}, together with \eqref{cclt-R-bound}--\eqref{cclt-kernel-bound}, verifies {\rm (A1)}--{\rm (A3)}, while the bound for $\Gamma_3$ verifies {\rm (A4)}. Since $1\leq Cn\delta^2$, the choices in \eqref{cclt-third-parameter-choices} satisfy
		\begin{equation}\label{cclt-third-parameter-bounds}
		r_{3,j}(t)\leq C\Gamma_{3,0}^j,\quad 1\leq j\leq3,
		\qquad \vartheta_3\leq C\Gamma_{3,0}^2,
		\qquad \rho_3^{-1}\leq C\Gamma_{3,0}^{-1},
		\qquad \rho_3^{-2}\leq C\Gamma_{3,0}^{-2}.
		\end{equation}
		After decreasing $c$ in \eqref{cclt-proof-range}, \eqref{cclt-third-parameter-bounds} shows that \eqref{mth-range} holds.  For $0\leq t\leq2x$, substitution into \eqref{mth-kappa}--\eqref{mth-Delta-definition} yields
		\begin{equation}\label{cclt-third-error-components}
		\tilde{\Delta}_3(t)\leq C\Gamma_{3,0},\qquad
		\Delta_3(t)\leq C\{\Gamma_{3,0}
		+\Gamma_{3,0}^2(1+t^3)\}.
		\end{equation}
		Consequently, \eqref{cclt-third-error-components} gives, uniformly in this range, $\eta_3(t)+\kappa_3(t)+(1+t)\Delta_3(t)\leq C\Gamma_{3,0}(1+t^2)$.

		Let $\lambda$ be the solution of the saddlepoint equation in \eqref{mth-lambda-definition}.  Since $m=3$, this equation is $x=\lambda+\Gamma_3\lambda^2/2$. Since $x-\lambda=\Gamma_3\lambda^2/2$ and $0\leq\lambda\leq2x$, the Mills-ratio bounds imply
		\[
		\left|\log\left\{
		\frac{1-\Phi(\lambda)}{1-\Phi(x)}
		\exp\left(-\frac{x^2-\lambda^2}{2}\right)\right\}\right|
		\leq\frac{C|x-\lambda|}{1+x}
		\leq C\Gamma_{3,0}(1+x).
		\]
		Therefore,
		\[
		\frac{1-\Phi(\lambda)}{1-\Phi(x)}
		=\exp\left\{\frac{x^2-\lambda^2}{2}\right\}
		\left[1+O\{\Gamma_{3,0}(1+x)\}\right].
		\]
		Furthermore, the saddlepoint equation implies
		\begin{equation*}
		\Gamma(\lambda)-\lambda\Gamma'(\lambda)+\frac{\lambda^2}{2}
		+\frac{x^2-\lambda^2}{2}
		=\frac{\Gamma_3\lambda^3}{6}
		+\frac{\Gamma_3^2\lambda^4}{8}
		=\frac{\Gamma_3x^3}{6}
		+O(\Gamma_{3,0}^2x^4).
		\end{equation*}
		In the range \eqref{cclt-proof-range}, both $\Gamma_{3,0}^2x^4$ and the error terms in \eqref{mth-explicit-result} are bounded by $C\Gamma_{3,0}(1+x^2)$.  Substitution into \eqref{mth-explicit-result}, followed by $\Gamma_{3,0}=Cn\delta^3$, proves \eqref{cclt-third-order-result}.
	\end{proof}

\subsection{Proof of Theorem \ref{theorem32}}\label{subsec-proof-theorem32}
For $\Sigma$ with the Gibbs law \eqref{cw-density}, we construct $(W,W')$ by updating one spin. Recall $m$, $W$ and $\sigma$ from \eqref{cw-normalization}. By stability, $\lambda=\{1-\beta(1-m^2)\}/n\asymp_{\beta,h}n^{-1}$ and $\sigma\asymp_{\beta,h}\sqrt n$. Choose $I$ uniformly from $[n]$, independently of $\Sigma$. Given $I=i$, draw $s\in\{-1,1\}$ from the conditional law of $\sigma_i$ given the other spins in \eqref{cw-density}, with probability
\[
 \frac{1+s\tanh\{\beta(S-\sigma_i)/n+\beta h\}}2.
\]
Replace the selected spin $\sigma_i$ by $s$, write $\sigma_i'=s$, and define
\begin{equation}\label{cw-exchangeable-pair}
 W'=W-\frac{\sigma_I-\sigma_I'}{\sigma},\qquad
 \Delta=W-W',\qquad |\Delta|\leq\frac2\sigma.
\end{equation}
Conditionally on $I$ and the other spins, $\sigma_I$ and $\sigma_I'$ are independent draws from the same conditional law. Thus $(W,W')$ has a symmetric joint distribution and is exchangeable. Define $R$ by $\E(\Delta\mid\Sigma)=\lambda(W-R)$. The expansion of $R$ is given in \eqref{cw-fourth-R-local}. Exchangeability gives
\begin{equation}\label{cw-exchangeable-stein}
 \E\{Wf(W)\}
 =\frac1{2\lambda}\E\{\Delta[f(W)-f(W')]\}
 +\E\{Rf(W)\}.
\end{equation}
Using the notation of Section \ref{subsubsec-third-order-Stein-derivation}, we take
\begin{equation}\label{cw-basic-kernel}
 \hat K(u)=\frac1{2\lambda}\E\{\Delta H_\Delta(u)\mid\Sigma\},
 \qquad
 \hat L=\int_{|u|\leq2/\sigma}\hat K(u)du-1.
\end{equation}
Thus \eqref{intro-general-Stein-identity} holds with kernel support $[-2/\sigma,2/\sigma]$. Repeatedly applying \eqref{cw-exchangeable-stein} to the polynomial terms in $R$ and $\hat L$ gives the fourth-order identity below. Its remainder bounds follow from tilted moment estimates, as shown in Appendix \ref{appD}.

\begin{lemma}\label{lemma-cw-fourth-stein}

 Assume the conditions of Theorem \ref{theorem32}. Let $\gamma_{3,n}$ and $\gamma_{4,n}$ denote the leading terms in \eqref{cw-explicit-cumulants}. There exist random variables $\mathcal R_{4,1},\ldots,\mathcal R_{4,4}$ and a random kernel $\mathcal K_{4,1}(u)$ such that, for every three times differentiable $f(w)$ with absolutely continuous $f^{(3)}(w)$, whenever the expectations below are finite,
 \begin{align}\label{cw-fourth-stein}
  \E\{Wf(W)\}
  ={}&\E f'(W)+\frac{\gamma_{3,n}}2\E f''(W)
  +\frac{\gamma_{4,n}}6\E f^{(3)}(W)\notag\\
  &+\sum_{j=1}^4\E\{\mathcal R_{4,j}f^{(j-1)}(W)\}
  +\E\int_{|u|\leq2/\sigma}f^{(4)}(W+u)
  \mathcal K_{4,1}(u)du.
 \end{align}
 For some $c_0=c_0(\beta,h)>0$, uniformly over $|t|\leq c_0n^{3/10}$,
 \begin{align}
  \E_t|\mathcal R_{4,1}|
   &\leq C_{\beta,h}\{n^{-1/2}+n^{-1}|t|+n^{-3/2}|t|^4\},
   \label{cw-fourth-R41-bound}\displaybreak[1]\\
  \E_t|\mathcal R_{4,2}|
   &\leq C_{\beta,h}\{n^{-1}+n^{-3/2}|t|^3\},
   \label{cw-fourth-R42-bound}\displaybreak[1]\\
  \E_t|\mathcal R_{4,3}|
   &\leq C_{\beta,h}n^{-3/2}(1+t^2),
   \label{cw-fourth-R43-bound}\displaybreak[1]\\
  \E_t|\mathcal R_{4,4}|
   &\leq C_{\beta,h}n^{-2}(1+t^2).
   \label{cw-fourth-R44-bound}
 \end{align}
 The kernel is supported on $[-2/\sigma,2/\sigma]$ and satisfies
 \begin{equation}\label{cw-kernel-bound}
  \E\left\{\int_{|u|\leq2/\sigma}e^{|tu|}
  |\mathcal K_{4,1}(u)|du\,\middle|\,W\right\}
  \leq C_{\beta,h}n^{-3/2}.
 \end{equation}
 Finally,
 \begin{equation}\label{cw-cumulant-matching}
  |\Gamma_3-\gamma_{3,n}|\leq C_{\beta,h}n^{-3/2},\qquad
  |\Gamma_4-\gamma_{4,n}|\leq C_{\beta,h}n^{-2}.
 \end{equation}
\end{lemma}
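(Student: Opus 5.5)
The plan is to build \eqref{cw-fourth-stein} by pushing the exchangeable-pair identity \eqref{cw-exchangeable-stein} to fourth order, in the same way as the third-order construction of Section~\ref{subsubsec-third-order-Stein-derivation}. All coefficients will be made explicit through Taylor expansions of the conditional moments of $\Delta$ around the mean-field point $\beta m+\beta h$.

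\textbf{Step 1: the conditional moments of $\Delta$.} Given $\Sigma$, $I$ is uniform, and $\sigma_I-\sigma_I'$ takes the value $0$ or $2\sigma_I$. Hence
\[
\E(\Delta^k\mid\Sigma)=\frac{2^{k-1}}{n\sigma^k}\sum_{i=1}^n\sigma_i^{k-1}\bigl[\sigma_i-\tanh\{\beta(S-\sigma_i)/n+\beta h\}\bigr].
\]
I write $S/n=m+\sigma W/n$ and expand $\tanh$ around $\beta m+\beta h$ up to a fixed order, using the mean-field equation \eqref{cw-normalization} to cancel the constant term. For $k=1$ this gives $\E(\Delta\mid\Sigma)=\lambda(W-R)$ with
\[
R=b_0n^{-1/2}+b_2n^{-1/2}W^2+b_3n^{-1}W^3+\cdots
\]
plus a remainder of order $n^{-3/2}|W|^4$ (the term of order $n^{-3/2}|W|^4$ appearing in \eqref{cw-fourth-R41-bound} after tilting). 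The coefficients $b_j$ are explicit functions of $(\beta,m)$.

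For $k=2$, I obtain $\hat L$ in \eqref{cw-basic-kernel} as $c_1n^{-1/2}W+c_2n^{-1}W^2+\cdots$ plus a constant of order $n^{-1}$. For $k=3$, the odd moment gives $\E(\Delta^3\mid\Sigma)/\lambda=O(n^{-1/2})$. For $k=4$, the even moment gives $\E(\Delta^4\mid\Sigma)/\lambda=O(n^{-1})$.

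\textbf{Step 2: reducing polynomial terms to the identity \eqref{cw-fourth-stein}.} Each term of the form $W^pf^{(j)}(W)$ arising from $\E\{Rf(W)\}$, $\E\{\hat Lf'(W)\}$ and the Taylor expansion of $\int[f'(W+u)-f'(W)]\hat K(u)du$ is reduced by applying \eqref{cw-exchangeable-stein} once more, with test function $g=w^{p-1}f^{(j)}$. This lowers the power of $W$ and raises the derivative order. At each stage I separate the mean coefficient from the centered fluctuation. Constants collect into the coefficients of $f''$ and $f^{(3)}$; I call them $\gamma_{3,n}/2$ and $\gamma_{4,n}/6$, and discard contributions of smaller order into $\mathcal R_{4,3}$ and $\mathcal R_{4,4}$. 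Leftover terms are assigned as follows. Terms with $f$ go into $\mathcal R_{4,1}$, with $f'$ into $\mathcal R_{4,2}$, and so on. Taylor remainders involving $f^{(4)}(W+u)$ with $|u|\le2/\sigma$ go into $\mathcal K_{4,1}$.

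The kernel bound \eqref{cw-kernel-bound} holds because the kernel is built from $\Delta^4/\lambda$ terms times $n^{-1/2}$ factors, or from centered $\Delta^3$ terms. Moreover, $e^{|tu|}\le e$ on the support, since $|t|\le c_0n^{3/10}\ll\sigma$.

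\textbf{Step 3: cumulant matching.} To get \eqref{cw-cumulant-matching} and the explicit formulas \eqref{cw-explicit-cumulants}, I test the identity with $f(w)=w^2$ and $f(w)=w^3$, which is legitimate since $W$ is bounded. This expresses $\E W^3$ and $\E W^4$ through $\gamma_{3,n}$, $\gamma_{4,n}$ and the means of the remainders. Combining with $\E W=O(n^{-1/2})$ and $\Var W=1+O(n^{-1})$ (Lemma~\ref{lemma-cw-moment-expansion}), the differences reduce to $\E\mathcal R_{4,j}$. These are bounded using the $t=0$ cases of \eqref{cw-fourth-R41-bound}--\eqref{cw-fourth-R44-bound}, together with an extra cancellation: the leading parts of the remainders are centered by construction.

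\textbf{Main obstacle: the tilted moment bounds.} The hardest step is \eqref{cw-fourth-R41-bound}--\eqref{cw-fourth-R44-bound}, which requires
\[
\E_t|W|^k\le C_{\beta,h,k}(1+|t|)^k\qquad\text{for }|t|\le c_0n^{3/10}.
\]
I would first establish $\E e^{sW}\le Ce^{Cs^2}$ for $|s|\le c\sqrt n$. This can be done either by the standard Hubbard--Stratonovich representation of the Curie--Weiss measure combined with strict concavity at the unique maximizer $m$ (which holds in the stated parameter regime), or by a Stein/exchangeable-pair argument for the moment generating function in the spirit of \citet{CFS2013AOP}. I would then control the tilted moments around the tilted center $\Gamma'(t)\approx t$ via the bound $\E_te^{\pm(W-t)}\le C$, obtained as in Lemma~\ref{lemma-mth-centered-exp}.

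With these estimates, each remainder is a polynomial in $W$ with the displayed powers of $n$. For example, the term $n^{-1}W^2f'$ remaining after centering contributes $n^{-1}+n^{-3/2}|t|^3$, once the leading mean is absorbed. Taking expectations under $\p_t$ then yields the stated bounds.
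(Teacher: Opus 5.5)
Your Steps 1--2 follow the paper's construction in outline: expand $\tanh$ at the mean-field point, reduce polynomial terms through the exchangeable pair, and collect Taylor remainders into the kernel. The genuine gap is Step 3, which does not prove \eqref{cw-cumulant-matching}. That estimate requires accuracy two factors of $n^{-1/2}$ below the size of $\Gamma_3$ and $\Gamma_4$.

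Testing \eqref{cw-fourth-stein} with $f\equiv1$ and $f(w)=w^2$ gives exactly $\E\mathcal R_{4,1}=\E W$ and
\[
\Gamma_3-\gamma_{3,n}=\E\{\mathcal R_{4,1}(W^2-1)\}+2\E\{\mathcal R_{4,2}W\}+2\E\mathcal R_{4,3}+O(n^{-3/2}).
\]
So what must be small are weighted expectations of the remainders, not $\E\mathcal R_{4,j}$. The $t=0$ cases of \eqref{cw-fourth-R41-bound}--\eqref{cw-fourth-R42-bound} give at best $O(n^{-1/2})$ and $O(n^{-1})$ for them. Nor is the leading part centered: $\E\mathcal R_{4,1}=\E W$ is of exact order $n^{-1/2}$ when $m\neq0$.

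What actually produces the gain is structure plus moment information. The $n^{-1/2}$ part of $\mathcal R_{4,1}$ is deterministic, so it only sees $\E W^2-1=O(n^{-1})$. The expansions also have a parity structure. In $R$, the terms carrying odd powers of $n^{-1/2}$ are even in $W$, and conversely. In $\hat L$, they are odd in $W$. Combined with odd moments of $W$ being $O(n^{-1/2})$, this supplies the missing factors.

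For $\Gamma_4$ to $O(n^{-2})$ this must be pushed one order further. That uses the next term \eqref{cw-tanh-fourth-remainder} of the $\tanh$ expansion and moments up to order six or seven. This is exactly the moment computation of Lemma \ref{lemma-cw-moment-expansion}: the recursion \eqref{cw-raw-moment-recursion}, the rough moments \eqref{cw-rough-moments} and the parity-based errors \eqref{cw-moment-reduction-errors}. Your plan does not contain it. Lemma \ref{lemma-cw-moment-expansion}, which you cite for $\E W$ and $\Var W$, is proved inside this lemma's proof and already states \eqref{cw-cumulant-matching}. Invoking it makes Step 3 superfluous; not invoking it leaves you without the moment estimates Step 3 needs.

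Two smaller points. First, your route to the tilted moments needs correcting.
\begin{itemize}
\item Obtaining them \emph{as in} Lemma \ref{lemma-mth-centered-exp} is circular. That lemma rests on Proposition \ref{prop-mth-mgf}, hence on (A2), which is what you are proving.
\item Centering at $t$ instead of $\Gamma'(t)$ is wrong, since $\E_tW-t\approx\Gamma_3t^2/2$ reaches order $n^{1/10}$ at $|t|=c_0n^{3/10}$.
\item The detour is unnecessary. The remainder estimates only use $\E_t|W|^k\le C(1+|t|)^k$. This follows from your sub-Gaussian bound $\E e^{sW}\le Ce^{Cs^2}$, which the Gaussian representation gives for all $s$. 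Write $|w|^k\le C_k\delta^{-k}(e^{\delta w}+e^{-\delta w})$ with $\delta=(1+|t|)^{-1}$, and use convexity of $\psi$ together with $\psi(s)\ge s\E W$.
\end{itemize}
The paper instead identifies $\p_t$ with the Gibbs law at field $h+t/(\beta\sigma)$ and uses moment bounds uniform over a compact field interval.

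Second, your sample estimate is false. A term $n^{-1}W^2$ left in $\mathcal R_{4,2}$ has $\E_t$-size of order $n^{-1}(1+t^2)$. This violates \eqref{cw-fourth-R42-bound} for $1\ll|t|\le c_0n^{3/10}$. The $t$-dependent bounds dictate where the reductions must stop.
\begin{itemize}
\item The $W^2$ and $W^3$ terms of $R$ must be reduced.
\item The $W$ and $W^2$ terms of $\hat L$ must be reduced.
\item Even the deterministic $O(n^{-3/2})\,Wf^{(3)}(W)$ terms must be moved into the kernel via \eqref{cw-fourth-Wf3}, as the paper does through $\chi_{4,n}$. Otherwise they violate \eqref{cw-fourth-R44-bound}.
\end{itemize}
This bookkeeping is the substance of the construction and must be carried out explicitly.
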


\begin{proof}[Proof of Theorem \ref{theorem32}]

 Theorem \ref{theorem22} requires exact centering and unit variance, so we first standardize $W$ and later transfer the expansion back. Set $\mu=\E W$, $s^2=\Var(W)$, and $\overline W=(W-\mu)/s$. By Lemma \ref{lemma-cw-moment-expansion},
 \begin{equation}\label{cw-affine-orders}
  |\mu|\leq C_{\beta,h}n^{-1/2},\qquad
  |s^2-1|\leq C_{\beta,h}n^{-1}.
 \end{equation}
 In particular, $s$ is bounded away from zero for sufficiently large $n$. Substituting $f\{(w-\mu)/s\}$ into \eqref{cw-fourth-stein} shows that $\overline W$ satisfies \eqref{mth-Stein} at order four, with cumulants $\overline\Gamma_3=\Gamma_3/s^3$, $\overline\Gamma_4=\Gamma_4/s^4$, and
 \begin{align}\label{cw-standardized-remainders}
  \overline{\mathcal R}_{4,1}
   &=s^{-1}(\mathcal R_{4,1}-\mu),
  &\overline{\mathcal R}_{4,2}
   &=s^{-2}(1+\mathcal R_{4,2})-1,\notag\\*
  \overline{\mathcal R}_{4,3}
   &=s^{-3}\left(\mathcal R_{4,3}
      +\frac{\gamma_{3,n}-\Gamma_3}{2}\right),
  &\overline{\mathcal R}_{4,4}
   &=s^{-4}\left(\mathcal R_{4,4}
      +\frac{\gamma_{4,n}-\Gamma_4}{6}\right).
 \end{align}
 The transformed kernels are $\overline{\mathcal K}_{4,1}(u)=s^{-4}\mathcal K_{4,1}(su)$ and $\overline{\mathcal K}_{4,2}(u)=0$. Tilting $\overline W$ by $t$ is equivalent to tilting $W$ by $t/s$. Equations \eqref{cw-fourth-R41-bound}--\eqref{cw-cumulant-matching} and \eqref{cw-affine-orders} therefore give the same bounds for the transformed remainders, after enlarging $C_{\beta,h}$ and decreasing $c_0$. Thus {\rm (A2)} holds with the right-hand sides of \eqref{cw-fourth-R41-bound}--\eqref{cw-fourth-R44-bound} as $r_{4,1}(t),\ldots,r_{4,4}(t)$. Conditions {\rm (A1)}, {\rm (A3)} and {\rm (A4)} hold with $T_4=c_0n^{3/10}$, $\rho_4=C_{\beta,h}n^{-1/2}$, $\vartheta_4=C_{\beta,h}n^{-3/2}$, $\Gamma_{3,0}=C_{\beta,h}n^{-1/2}$ and $\Gamma_{4,0}=C_{\beta,h}n^{-1}$.

 Put $\varepsilon_n(t)=(1+t)n^{-1/2}+(1+t)^3n^{-1}+(1+t)^5n^{-3/2}$ for $t\geq0$. Substituting these bounds into \eqref{mth-kappa}--\eqref{mth-Delta-definition} gives
 \begin{equation}\label{cw-standardized-error-bound}
  \eta_4(t)+\kappa_4(t)+(1+t)\Delta_4(t)
  \leq C_{\beta,h}\varepsilon_n(t),\qquad 0\leq t\leq cn^{3/10},
 \end{equation}
 for sufficiently small $c=c(\beta,h)>0$. All restrictions in \eqref{mth-range} have order $n^{3/10}$. Theorem \ref{theorem22} and the saddlepoint calculation in the proof of Theorem \ref{theorem31} now yield
 \begin{equation}\label{cw-standardized-result}
  \frac{\p(\overline W>y)}{1-\Phi(y)}
  =\exp\biggl\{\frac{\overline\Gamma_3y^3}{6}
   +\frac{\overline\Gamma_4-3\overline\Gamma_3^2}{24}y^4\biggr\}
   [1+O_{\beta,h}\{\varepsilon_n(y)\}],
 \end{equation}
 uniformly for $0\leq y\leq cn^{3/10}$.

 To return to $W$, put $y=(x-\mu)/s$. Equation \eqref{cw-affine-orders} gives $|y-x|\leq C_{\beta,h}(n^{-1/2}+x/n)$. For $x\geq2|\mu|$, both $x$ and $y$ are nonnegative, and Mills' ratio gives
 \begin{equation}\label{cw-normal-tail-transfer}
  \left|\log\frac{1-\Phi(y)}{1-\Phi(x)}\right|
  \leq C_{\beta,h}\left\{\frac{1+x}{\sqrt n}
   +\frac{(1+x)^2}{n}\right\}.
 \end{equation}
 Moreover, the cumulant bounds and \eqref{cw-affine-orders} imply
 \begin{equation}\label{cw-exponent-transfer}
  \left|\frac{\overline\Gamma_3y^3-\Gamma_3x^3}{6}
   +\frac{(\overline\Gamma_4-3\overline\Gamma_3^2)y^4
    -(\Gamma_4-3\Gamma_3^2)x^4}{24}\right|
  \leq C_{\beta,h}\left\{\frac{(1+x)^3}{n}
   +\frac{(1+x)^5}{n^{3/2}}\right\}.
 \end{equation}
 Combining \eqref{cw-standardized-result}, \eqref{cw-normal-tail-transfer} and \eqref{cw-exponent-transfer} proves \eqref{cw-fourth-order-result} on this range, after decreasing $c$ if necessary. For $0\leq x<2|\mu|$, apply \eqref{mth-original-BE} to $\overline W$ and use \eqref{cw-affine-orders}, which gives an error of $O_{\beta,h}(n^{-1/2})$. Increasing the constant covers the finitely many remaining values of $n$.
\end{proof}

\section{Proofs of the auxiliary results in Section \ref{sec4}}\label{sec-auxiliary-proofs}
We now prove the propositions and lemmas stated in Section \ref{sec4}, using the notation introduced there.

\begin{proof}[Proof of Proposition \ref{prop-mth-mgf}]
		Following the exponential-moment approach of \citet{CHATTERJEE2007,CFS2013AOP}, we control $\psi'(t)-\Gamma'(t)$ through the remainder assumptions and then integrate. For fixed $|t|<T_m$, the choice $f(w)=e^{tw}$ gives $f^{(k)}(w)=t^ke^{tw}$. Using $\Gamma'(t)=t+\sum_{j=3}^{m}\Gamma_jt^{j-1}/(j-1)!$ and $\psi'(t)=\E_tW$, we substitute this function into \eqref{mth-Stein} and divide by $M(t)$ to obtain
		\begin{align}\label{mth-mgf-derivative-identity}
			\psi'(t)-\Gamma'(t)
			&=\sum_{j=1}^{m}t^{j-1}\E_t\mathcal R_{m,j}+t^m\E_t\int_{|u|\leq\rho_m}
			e^{tu}\mathcal K_{m,1}(u)\,du \notag\\
			&\quad+t^m\E_t\int_{|u|\leq\rho_m}
			e^{tUu}\mathcal K_{m,2}(u)\,du.
		\end{align}
		By {\rm (A2)} and the identity $r_{m,j}(-t)=r_{m,j}(t)$,
		\begin{equation}\label{mth-mgf-remainder-bound}
			|\E_t\mathcal R_{m,j}|
			\leq\E_t|\mathcal R_{m,j}|
			\leq r_{m,j}(|t|).
		\end{equation}
		For the two kernel terms, $0\leq U\leq1$ implies $e^{tu}\leq e^{|tu|}$ and $e^{tUu}\leq e^{|tu|}$. Applying the conditional bound in {\rm (A3)} and taking expectations gives
		\begin{align}
			&\left|\E_t\int_{|u|\leq\rho_m}
			e^{tu}\mathcal K_{m,1}(u)\,du\right|
			+\left|\E_t\int_{|u|\leq\rho_m}
			e^{tUu}\mathcal K_{m,2}(u)\,du\right|\notag\\*
			\leq&\frac{1}{M(t)}\E\biggl[e^{tW}
			\E\biggl\{\int_{|u|\leq\rho_m}e^{|tu|}
			\bigl\{|\mathcal K_{m,1}(u)|+|\mathcal K_{m,2}(u)|\bigr\}\,du
			\,\bigg|\,W\biggr\}\biggr]
			\leq
			\vartheta_m.\label{mth-mgf-kernel-bound}
		\end{align}
		Combining \eqref{mth-mgf-derivative-identity}, \eqref{mth-mgf-remainder-bound}, and \eqref{mth-mgf-kernel-bound}, we obtain
		\begin{equation}\label{mth-mgf-derivative-bound}
			|\psi'(t)-\Gamma'(t)|
			\leq\sum_{j=1}^{m}|t|^{j-1}r_{m,j}(|t|)
			+|t|^m\vartheta_m,
			\qquad |t|<T_m.
		\end{equation}

		For $t\geq0$, integrate \eqref{mth-mgf-derivative-bound} over $[0,t]$. For $t<0$, use $\psi(t)-\Gamma(t)=-\int_t^0\{\psi'(s)-\Gamma'(s)\}\,ds$ and then make the substitution $v=-s$.  Since $\psi(0)=\Gamma(0)=0$, integrating \eqref{mth-mgf-derivative-bound} over the corresponding interval and using the monotonicity of $r_{m,j}(v)$ on $[0,T_m]$, we obtain, in both cases,
		\begin{align*}
			|\psi(t)-\Gamma(t)|
			\leq\sum_{j=1}^{m}\int_0^{|t|}
			v^{j-1}r_{m,j}(v)\,dv
			+\vartheta_m\int_0^{|t|} v^m\,dv
			\leq\sum_{j=1}^{m}\frac{|t|^j}{j}r_{m,j}(|t|)
			+\frac{|t|^{m+1}}{m+1}\vartheta_m
			\leq\kappa_m(|t|).
		\end{align*}
		By continuity of $M(t)$, taking limits from the interior and using $\kappa_m(|t|)\leq\kappa_m(T_m)$ gives the same bound at $t=\pm T_m$. This proves \eqref{mth-mgf-estimate} for $|t|\leq T_m$.
	\end{proof}

	\begin{proof}[Proof of Lemma \ref{lemma-mth-centered-exp}]
		For $s\in\{-1,1\}$, we compare the moment generating function at $t$ and $t+s$ to estimate the centered exponential moments. This comparison leaves a second-order Taylor remainder for $\Gamma(u)$. The resulting exponential bound will also control the polynomial moments. Proposition \ref{prop-mth-mgf} and the monotonicity of $\kappa_m(x)$ for $x\geq0$ give
		\[
			\log\E_te^{s(W-\Gamma'(t))}
			=\psi(t+s)-\psi(t)-s\Gamma'(t)
			\leq|\Gamma(t+s)-\Gamma(t)-s\Gamma'(t)|
			+2\kappa_m(t+1).
		\]
		Taylor's expansion gives
		\[
		\Gamma(t+s)-\Gamma(t)-s\Gamma'(t)=s^2\int_0^1(1-v)\Gamma''(t+vs)\,dv.
		\]
		Since $|t+vs|\leq t+1$, \eqref{mth-eta} and {\rm (A4)} imply $|\Gamma(t+s)-\Gamma(t)-s\Gamma'(t)|\leq\frac12\{1+\eta_m(t+1)\}$. Consequently,
		\[
		\log\E_te^{s(W-\Gamma'(t))}\leq\frac12+2\{\eta_m(t+1)+\kappa_m(t+1)\},
		\]
		which proves \eqref{mth-centered-exp-bound}.  The moment bound \eqref{mth-centered-moment-bound} follows from $|y|^k\leq C_k(e^y+e^{-y})$.
	\end{proof}

	\begin{proof}[Proof of Proposition \ref{prop-mth-tilted-concentration}]
		Following Proposition 3.1 of \citet{CS2004}, we use a smoothed test function in \eqref{mth-gamma-conjugate-stein} to derive recursive bounds for interval probabilities. To express this recursion, for $r\geq0$ set $L_t(r)\coloneqq\sup_{z\in\mathbb R}\p_t(z\leq W\leq z+r)$. We choose the test function so that its derivative majorizes the interval indicator and its higher derivatives are supported near the endpoints. Their expectations are then controlled by $L_t(\epsilon_m)$, whose coefficient can be absorbed by the choice of smoothing width. Fix $a<b$ and, using $Q_m(s)$ from \eqref{mth-smoothing-functions}, define
		\begin{equation*}
			\mathfrak f_m(w)=
			\begin{cases}
				-\dfrac{b-a+\epsilon_m}{2}, & w\leq a-\epsilon_m,\\[1.5ex]
				-\dfrac{b-a+\epsilon_m}{2}
				+\epsilon_m Q_m\Bigl(\dfrac{w-a+\epsilon_m}{\epsilon_m}\Bigr),
				& a-\epsilon_m<w\leq a,\\[2ex]
				w-\dfrac{a+b}{2}, & a<w\leq b,\\[1.5ex]
				\dfrac{b-a}{2}+(w-b)
				-\epsilon_m Q_m\Bigl(\dfrac{w-b}{\epsilon_m}\Bigr),
				& b<w\leq b+\epsilon_m,\\[2ex]
				\dfrac{b-a+\epsilon_m}{2}, & w>b+\epsilon_m.
			\end{cases}
		\end{equation*}
		Then $\mathfrak f_m(w)$ is $m$ times continuously differentiable.  Moreover,
		\begin{equation}\label{mth-smooth-basic-bounds}
			|\mathfrak f_m(w)|\leq\frac{b-a+\epsilon_m}{2},\qquad
			\I(a\leq w\leq b)\leq\mathfrak f_m'(w)\leq1,
		\end{equation}
		and, for $2\leq r\leq m$,
		\begin{equation}\label{mth-smooth-derivative-bounds}
			|\mathfrak f_m^{(r)}(w)|
			\leq A_m\epsilon_m^{1-r}\bigl[\I(a-\epsilon_m\leq w\leq a)+\I(b\leq w\leq b+\epsilon_m)\bigr].
		\end{equation}
		By \eqref{mth-smooth-basic-bounds}, $\p_t(a\leq W\leq b)\leq \E_t\mathfrak f_m'(W)$, so it remains to bound $\E_t\mathfrak f_m'(W)$ using \eqref{mth-gamma-conjugate-stein}. For the first term on its right-hand side, \eqref{mth-smooth-basic-bounds} and Lemma \ref{lemma-mth-centered-exp} yield
		\begin{align}\notag
			\left|\E_t\{(W-\Gamma'(t))\mathfrak f_m(W)\}\right|
			\leq
			C_m(b-a+\epsilon_m)
			e^{2\{\eta_m(t+1)+\kappa_m(t+1)\}}.
		\end{align}
		For the cumulant terms, we have
		\begin{equation}\label{mth-cumulant-term-expansion}
		\sum_{j=3}^{m}\frac{\Gamma_j}{(j-1)!}
		\E_t\{\mathfrak f_{m,j-1,t}(W)-t^{j-1}\mathfrak f_m(W)\}
		=\sum_{j=3}^{m}\frac{\Gamma_j}{(j-1)!}
		\sum_{\ell=1}^{j-1}\binom{j-1}{\ell}t^{j-1-\ell}
		\E_t\mathfrak f_m^{(\ell)}(W).
		\end{equation}
		We first consider the term with $\ell=1$ on the right-hand side of \eqref{mth-cumulant-term-expansion}, which is $\sum_{j=3}^{m}\Gamma_jt^{j-2}\allowbreak\E_t\mathfrak f_m'(W)/(j-2)!$. The upper bound on $t$ ensures $\Gamma_{j,0}t^{j-2}\leq\Gamma_{j,0}^{2/(m+1)}$.  Moreover, since the same $t$ satisfies the lower bound in Proposition \ref{prop-mth-tilted-concentration}, $(16A_m)^{(m-1)/2}\leq\Gamma_{j,0}^{-(m-1)/[(j-2)(m+1)]}$, which implies
		\begin{equation*}
			\frac{|\Gamma_j|t^{j-2}}{(j-2)!}
			\leq\frac{\Gamma_{j,0}t^{j-2}}{(j-2)!}
			\leq\frac{(16A_m)^{-(j-2)}}{(j-2)!},
			\qquad 3\leq j\leq m.
		\end{equation*}
		Consequently,
		\begin{align}\label{mth-cumulant-main-sum}
			\sum_{j=3}^{m}\frac{|\Gamma_j|t^{j-2}}{(j-2)!}
			&\leq\sum_{j=3}^{m}\frac{\Gamma_{j,0}t^{j-2}}{(j-2)!}
			\leq\sum_{r=1}^{m-2}\frac{(16A_m)^{-r}}{r!} \notag\\
			&\leq\sum_{r=1}^{\infty}(16A_m)^{-r}
			=\frac{1}{16A_m-1}\leq\frac{1}{15A_m}\leq\frac{1}{15}.
		\end{align}
		Here we used $A_m\geq1$. Note that $\mathfrak f_m'(w)=1$ on $[a,b]$, while $0\leq\mathfrak f_m'(w)\leq1$ on $[a-\epsilon_m,a]$ and $[b,b+\epsilon_m]$.  Hence, by the definition of $L_t(r)$,
		\begin{align}\label{mth-first-derivative-splitting}
			0\leq\E_t\mathfrak f_m'(W)-\p_t(a\leq W\leq b)
			\leq2L_t(\epsilon_m).
		\end{align}
		By \eqref{mth-cumulant-main-sum}, the coefficient of $\p_t(a\leq W\leq b)$ arising from $\E_t\mathfrak f_m'(W)$ is at most $1/15$ in absolute value, so this term is moved to the left-hand side of the inequality obtained from \eqref{mth-gamma-conjugate-stein}. The remaining part of the $\ell=1$ term is bounded by \eqref{mth-first-derivative-splitting}.  For $\ell\geq2$, \eqref{mth-smooth-derivative-bounds} shows that $\mathfrak f_m^{(\ell)}(w)$ is supported on the same two transition intervals. Hence all the remaining cumulant terms satisfy
		\begin{align}\label{mth-cumulant-iteration}
			&\sum_{j=3}^{m}\frac{\Gamma_{j,0}}{(j-1)!}\biggl[
			(j-1)t^{j-2}\E_t\left|\mathfrak f_m'(W)
			-\I(a\leq W\leq b)\right|+\sum_{\ell=2}^{j-1}\binom{j-1}{\ell}t^{j-1-\ell}
			\E_t|\mathfrak f_m^{(\ell)}(W)|\biggr]\\
			\leq{}&2A_m\sum_{j=3}^{m}\frac{\Gamma_{j,0}}{(j-1)!}
			\sum_{\ell=1}^{j-1}\binom{j-1}{\ell}
			t^{j-1-\ell}\epsilon_m^{1-\ell}L_t(\epsilon_m).\notag
		\end{align}
		For $\ell=1$, \eqref{mth-cumulant-main-sum} bounds the coefficient of $L_t(\epsilon_m)$ on the right-hand side of \eqref{mth-cumulant-iteration} by
		\begin{equation}\label{mth-cumulant-iteration-first-coefficient}
			2A_m\sum_{j=3}^{m}\frac{\Gamma_{j,0}t^{j-2}}{(j-2)!}
			\leq\frac{2A_m}{16A_m-1}
			\leq\frac{2}{15}.
		\end{equation}
		For $2\leq\ell\leq j-1$, \eqref{mth-concentration-length} and the upper bound on $t$ imply $\Gamma_{j,0}^{1/(j-2)}\epsilon_m^{-1}\leq(2^{m+4}A_m)^{-1}$ and $\Gamma_{j,0}^{1/(j-2)}t\leq1$. Consequently,
		\[
		\Gamma_{j,0}t^{j-1-\ell}\epsilon_m^{1-\ell}
		=\{\Gamma_{j,0}^{1/(j-2)}t\}^{j-1-\ell}
		\{\Gamma_{j,0}^{1/(j-2)}\epsilon_m^{-1}\}^{\ell-1}
		\leq(2^{m+4}A_m)^{-1},
		\]
		and the coefficient of $L_t(\epsilon_m)$ corresponding to $2\leq\ell\leq j-1$ is bounded by
		\begin{align}\label{mth-cumulant-iteration-higher-coefficient}
				2A_m\sum_{j=3}^{m}\frac{\Gamma_{j,0}}{(j-1)!}
				\sum_{\ell=2}^{j-1}\binom{j-1}{\ell}
				t^{j-1-\ell}\epsilon_m^{1-\ell}
				&\leq 2^{-(m+3)}
				\sum_{j=3}^{m}\frac{1}{(j-1)!}
				\sum_{\ell=2}^{j-1}\binom{j-1}{\ell}\\ \notag
				&= 2^{-(m+3)}\sum_{j=3}^{m}\frac{2^{j-1}-j}{(j-1)!}
				<2^{-(m+3)}(e^2-2e)
				<2^{-m-2}.
		\end{align}
		Combining \eqref{mth-cumulant-iteration-first-coefficient} and \eqref{mth-cumulant-iteration-higher-coefficient} shows that the left-hand side of \eqref{mth-cumulant-iteration} is bounded by
		\begin{equation}\label{mth-cumulant-iteration-total-bound}
		\left(\frac{2}{15}+2^{-m-2}\right)L_t(\epsilon_m).
		\end{equation}
		For the scalar remainder terms in \eqref{mth-conjugate-remainder}, \eqref{A2-moment} and \eqref{mth-smooth-basic-bounds}--\eqref{mth-smooth-derivative-bounds} yield
		\begin{align}\label{mth-concentration-R}
			\sum_{j=1}^{m}\left|\E_t\{\mathcal R_{m,j}\mathfrak f_{m,j-1,t}(W)\}\right|
			\leq
			C_m\sum_{j=1}^{m}r_{m,j}(t)\biggl[t^{j-1}(b-a+\epsilon_m)
			+\sum_{\ell=1}^{j-1}t^{j-1-\ell}\epsilon_m^{1-\ell}\biggr].
		\end{align}
		It remains to consider the two kernel integrals in \eqref{mth-conjugate-remainder}.  It follows from \eqref{A3-kernel} and $|\mathfrak f_m'(w)|\leq1$ that the terms with $\ell=1$ satisfy
		\begin{align}
			&\left|\E_t\int_{|u|\leq\rho_m}e^{tu}mt^{m-1}
			\mathfrak f_m'(W+u)\mathcal K_{m,1}(u)du\right|+\left|\E_t\int_{|u|\leq\rho_m}e^{tUu}mt^{m-1}
			\mathfrak f_m'(W+Uu)\mathcal K_{m,2}(u)du\right|\notag\\*
			\leq{}&m\vartheta_mt^{m-1}
			\leq m\vartheta_m^{2/(m+1)}
			\leq m\vartheta_m^{1/(m-1)}
			\leq C_m\epsilon_m.\label{mth-kernel-first-derivative-bound}
		\end{align}
		The second inequality in \eqref{mth-kernel-first-derivative-bound} follows from $t\leq\vartheta_m^{-1/(m+1)}$, while the third uses $m\geq3$ and $\vartheta_m\leq1$, the latter being implied by the nonempty range of $t$.  For $2\leq\ell\leq m$, the support of $\mathfrak f_m^{(\ell)}(W+v)$ is contained in two intervals of length $\epsilon_m+2\rho_m\leq2\epsilon_m$ whenever $|v|\leq\rho_m$. Each can be covered by two intervals of length $\epsilon_m$, which implies $L_t(2\epsilon_m)\leq2L_t(\epsilon_m)$.  Hence
		\begin{align}\label{mth-kernel-iteration}
			&\biggl|\E_t\int_{|u|\leq\rho_m}e^{tu}
			\sum_{\ell=2}^{m}\binom{m}{\ell}t^{m-\ell}
			\mathfrak f_m^{(\ell)}(W+u)\mathcal K_{m,1}(u)du\biggr| \notag\\
			&\quad+\biggl|\E_t\int_{|u|\leq\rho_m}e^{tUu}
			\sum_{\ell=2}^{m}\binom{m}{\ell}t^{m-\ell}
			\mathfrak f_m^{(\ell)}(W+Uu)\mathcal K_{m,2}(u)du\biggr| \notag\\
			\leq{}&4A_m\vartheta_m\sum_{\ell=2}^{m}\binom{m}{\ell}
			t^{m-\ell}\epsilon_m^{1-\ell}L_t(\epsilon_m).
		\end{align}
		Furthermore, the upper bound $t\leq\vartheta_m^{-1/(m+1)}$ and $\vartheta_m\leq1$ yield
		\begin{align*}
			\vartheta_mt^{m-\ell}\epsilon_m^{1-\ell}
			=\{\vartheta_m^{1/(m+1)}t\}^{m-\ell}
			\{\vartheta_m^{1/(m-1)}\epsilon_m^{-1}\}^{\ell-1}
			\vartheta_m^{2(m-\ell)/(m^2-1)}
			\leq(2^{m+4}A_m)^{-1}.
		\end{align*}
		Thus the coefficient of $L_t(\epsilon_m)$ on the right-hand side of \eqref{mth-kernel-iteration} satisfies
		\begin{equation}\label{mth-kernel-iteration-coefficient}
			4A_m\vartheta_m\sum_{\ell=2}^{m}\binom{m}{\ell}
			t^{m-\ell}\epsilon_m^{1-\ell}
			\leq2^{-m-2}\sum_{\ell=2}^{m}\binom{m}{\ell}
			=\frac14-\frac{m+1}{2^{m+2}}.
		\end{equation}
		Hence the left-hand side of \eqref{mth-kernel-iteration} is bounded by $\{1/4-(m+1)/2^{m+2}\}L_t(\epsilon_m)$. The terms corresponding to $\ell=0$ are bounded directly by \eqref{A3-kernel}:
		\begin{align}\notag
			&\biggl|\E_t\int_{|u|\leq\rho_m}e^{tu}t^m
			\mathfrak f_m(W+u)\mathcal K_{m,1}(u)du\biggr|+\biggl|\E_t\int_{|u|\leq\rho_m}e^{tUu}t^m
			\mathfrak f_m(W+Uu)\mathcal K_{m,2}(u)du\biggr|\\ \notag
			\leq & C_m\vartheta_mt^m(b-a+\epsilon_m)
			\leq C_m(b-a+\epsilon_m).
		\end{align}
		Here we used $t\leq\vartheta_m^{-1/(m+1)}$. We now combine the estimates for the cumulant, scalar remainder, and kernel terms.  By \eqref{mth-cumulant-main-sum}, the cumulant term containing $\p_t(a\leq W\leq b)$ has coefficient at most $1/15$ and has already been moved to the left-hand side.  Dividing by $1-1/15$ multiplies the remaining right-hand side by at most $15/14$.  Before this division, \eqref{mth-cumulant-iteration-total-bound} bounds the cumulant-iteration term, while \eqref{mth-kernel-iteration-coefficient} bounds the term from the kernel iteration by $\{1/4-(m+1)/2^{m+2}\}L_t(\epsilon_m)$.  Their final coefficient is bounded by
		\[
		\frac{15}{14}\left(\frac{2}{15}+2^{-m-2}
		+\frac14-\frac{m+1}{2^{m+2}}\right)
		\leq\frac{15}{14}\left(\frac{2}{15}+\frac14\right)
		=\frac{23}{56}.
		\]
		The terms containing $t^{j-1}r_{m,j}(t)(b-a+\epsilon_m)$ in \eqref{mth-concentration-R} form the second term in the brackets below. Consequently,
		\begin{align}\label{mth-tilted-concentration-recursive}
			\p_t(a\leq W\leq b)
			\leq{}& C_m(b-a+\epsilon_m)\biggl[
			e^{2\{\eta_m(t+1)+\kappa_m(t+1)\}}
			+\sum_{j=1}^{m}t^{j-1}r_{m,j}(t)
			\biggr]\notag\\
			&+C_m\sum_{j=1}^{m}r_{m,j}(t)
			\sum_{\ell=1}^{j-1}t^{j-1-\ell}\epsilon_m^{1-\ell}
			+\frac{23}{56}L_t(\epsilon_m).
		\end{align}
		Choosing $a=z$ and $b=z+\epsilon_m$ makes the interval length equal to the smoothing width, allowing the recursive inequality to close. Taking the supremum over $z$ and moving $\frac{23}{56}L_t(\epsilon_m)$ to the left-hand side yields \eqref{mth-local-concentration}.  Substituting \eqref{mth-local-concentration} into \eqref{mth-tilted-concentration-recursive} removes the remaining $L_t(\epsilon_m)$ term and yields \eqref{mth-tilted-concentration}.
	\end{proof}

\begin{proof}[Proof of Proposition \ref{prop-mth-tilted-BE}]
		Collecting the coefficients of $f(W)$ and $f'(W)$ in \eqref{mth-conjugate-stein} identifies a Gaussian reference with center $\Gamma'(t)$ and variance $\Gamma''(t)$. For $m\geq4$, the variance correction must be retained to obtain the stated error bound. The comparison for the centered variable $Y_{m,t}$ therefore uses a normal Stein equation with variance $\Gamma''(t)$. For $z\in\mathbb R$, define its smoothed indicator using $q_m(s)$ from \eqref{mth-smoothing-functions} by
		\begin{equation*}
			h_{m,z}(w)=
			\begin{cases}
				1, & w<z,\\[0.8ex]
				1-q_m\Bigl(\dfrac{w-z}{\rho_m}\Bigr), & z\leq w<z+\rho_m,\\[2ex]
				0, & w\geq z+\rho_m.
			\end{cases}
		\end{equation*}
		By construction, $h_{m,z}(w)$ is $(m-1)$ times continuously differentiable and $h_{m,z}^{(m-1)}(w)$ is absolutely continuous.  Moreover, for $1\leq\ell\leq m$ and almost every $w$,
		\begin{equation}\label{mth-h-derivative-bound}
			|h_{m,z}^{(\ell)}(w)|\leq C_m\rho_m^{-\ell}\I(z\leq w\leq z+\rho_m).
		\end{equation}
		Let $f_{m,z}(w)$ solve the Stein equation
		\begin{equation}\label{mth-BE-stein-equation}
			\Gamma''(t)f_{m,z}'(w)-wf_{m,z}(w)=h_{m,z}(w)-\E h_{m,z}(\sqrt{\Gamma''(t)}Z),\qquad Z\sim N(0,1).
		\end{equation}
		Since $\Gamma''(t)-1=\sum_{j=3}^{m}\Gamma_jt^{j-2}/(j-2)!$, \eqref{mth-cumulant-main-sum} implies that $1/2\leq\Gamma''(t)\leq2$ in the present range.  After rescaling the Stein equation to variance $\Gamma''(t)$, Lemma 2.4 of \citet{CGS2011} yields
		\begin{equation}\label{mth-BE-basic-Stein-bounds}
			|f_{m,z}(w)|\leq\sqrt{\pi/2},\qquad |f_{m,z}'(w)|\leq2.
		\end{equation}
		Repeated differentiation of \eqref{mth-BE-stein-equation} yields, for $\ell\geq1$,
		\begin{equation}\label{mth-BE-derivative-recursion}
			\Gamma''(t)f_{m,z}^{(\ell+1)}(w)=wf_{m,z}^{(\ell)}(w)+\ell f_{m,z}^{(\ell-1)}(w)+h_{m,z}^{(\ell)}(w)
		\end{equation}
		at every point where $h_{m,z}^{(\ell)}(w)$ exists.  Combining \eqref{mth-h-derivative-bound}--\eqref{mth-BE-derivative-recursion}, we obtain, for $2\leq\ell\leq m$,
		\begin{equation}\label{mth-BE-derivative-bound}
			|f_{m,z}^{(\ell)}(w)|
			\leq C_m(1+|w|^{\ell-1})
			+C_m\sum_{r=1}^{\ell-1}\rho_m^{-r}(1+|w|^{\ell-1-r})\I(z\leq w\leq z+\rho_m).
		\end{equation}

		Applying the Stein solution in \eqref{mth-conjugate-stein} requires bounds for the polynomial terms and interval indicators in \eqref{mth-BE-derivative-bound}. We begin with the indicators, controlled by the concentration inequality in Proposition \ref{prop-mth-tilted-concentration}. Smoothing bounds the approximation error by a Stein expectation, an interval probability, and a deterministic smoothing term:
		\begin{align}\label{mth-BE-smoothing-reduction}
			\left|\p_t(Y_{m,t}\leq z)-\Phi\bigl(z/\sqrt{\Gamma''(t)}\bigr)\right|
			\leq{}&
			\left|\E_t h_{m,z}(Y_{m,t})-\E h_{m,z}(\sqrt{\Gamma''(t)}Z)\right| \notag\\
			&\quad+\p_t(z\leq Y_{m,t}\leq z+\rho_m)+C\rho_m.
		\end{align}
		Since $\rho_m\leq\epsilon_m$, Proposition \ref{prop-mth-tilted-concentration} implies
		\begin{equation}\label{mth-BE-small-interval}
			\sup_{z\in\mathbb R}\p_t(z\leq Y_{m,t}\leq z+\rho_m)\leq C_m\tilde{\Delta}_m(t).
		\end{equation}
		To handle moments weighted by the random remainder coefficients, we shift the tilt parameter and use Proposition \ref{prop-mth-mgf}. The resulting exponential-shift argument based on \eqref{mth-mgf-estimate} yields, for $1\leq k\leq m$ and every random variable $G$ for which the right-hand side is finite,
		\begin{equation}\label{mth-BE-polynomial-moment}
			\E_t\{|G||W|^k\}
			\leq C_mt^k e^{2\kappa_m(t+(1+t)^{-1})}
			\bigl\{\E_{t+(1+t)^{-1}}|G|+\E_{t-(1+t)^{-1}}|G|\bigr\}.
		\end{equation}
		By \eqref{mth-cumulant-main-sum}, $|\Gamma'(t)|\leq C_mt$.  Hence $|Y_{m,t}|^k\leq C_m(|W|^k+t^k)$, and the same exponential-shift calculation shows that \eqref{mth-BE-polynomial-moment} also holds with $|W|^k$ replaced by $|Y_{m,t}|^k$. For the centered moments without a random weight, Lemma \ref{lemma-mth-centered-exp} applies directly. Since $Y_{m,t}=W-\Gamma'(t)$, it gives
		\begin{equation}\label{mth-BE-centered-moment}
			\E_t|Y_{m,t}|^k
			\leq C_m\exp\{2[\eta_m(t+1)+\kappa_m(t+1)]\},\qquad 1\leq k\leq m-1.
		\end{equation}
		Since the interval indicator is bounded by one, \eqref{mth-BE-centered-moment} implies, for $1\leq k\leq m-1$,
		\begin{align}\label{mth-BE-polynomial-indicator}
			\E_t\{|Y_{m,t}|^k\I(z-u\leq Y_{m,t}\leq z+\rho_m-u)\}
			\leq C_m\exp\{2[\eta_m(t+1)+\kappa_m(t+1)]\}.
		\end{align}
		For $k=0$, the interval probability is bounded by \eqref{mth-BE-small-interval}, uniformly over its location and hence also after the shift by $u$.
		
		Substituting $w=Y_{m,t}$ into \eqref{mth-BE-stein-equation} and taking expectations under $\p_t$, we obtain
		\begin{equation}\label{mth-BE-identity}
			\E_t h_{m,z}(Y_{m,t})-\E h_{m,z}(\sqrt{\Gamma''(t)}Z)
			=\E_t\{\Gamma''(t)f_{m,z}'(Y_{m,t})-Y_{m,t}f_{m,z}(Y_{m,t})\}.
		\end{equation}
		Now substitute $g(w)=f_{m,z}(w-\Gamma'(t))$ into \eqref{mth-conjugate-stein}.  Using the definition of $Y_{m,t}$ expands the right-hand side of \eqref{mth-BE-identity} as
		\begin{align}\label{mth-BE-expanded-identity}
			\E_t h_{m,z}(Y_{m,t})-\E h_{m,z}(\sqrt{\Gamma''(t)}Z)
			={}&
			-\sum_{j=3}^{m}\frac{\Gamma_j}{(j-1)!}
			\E_t\sum_{\ell=2}^{j-1}\binom{j-1}{\ell}t^{j-1-\ell}f_{m,z}^{(\ell)}(Y_{m,t}) \\*
			&-\sum_{j=1}^{m}\E_t\biggl\{
			\mathcal R_{m,j}\sum_{\ell=0}^{j-1}\binom{j-1}{\ell}t^{j-1-\ell}f_{m,z}^{(\ell)}(Y_{m,t})\biggr\} \notag\\*
			&-\E_t\int_{|u|\leq\rho_m}e^{tu}
			\sum_{\ell=0}^{m}\binom m\ell t^{m-\ell}f_{m,z}^{(\ell)}(Y_{m,t}+u)\mathcal K_{m,1}(u)du \notag\\*
			&-\E_t\int_{|u|\leq\rho_m}e^{tUu}
			\sum_{\ell=0}^{m}\binom m\ell t^{m-\ell}f_{m,z}^{(\ell)}(Y_{m,t}+Uu)\mathcal K_{m,2}(u)du. \notag
		\end{align}
		Here the terms with $\ell=0$ in the cumulant sum cancel with $(\Gamma'(t)-t)f_{m,z}(Y_{m,t})$ coming from $(W-t)g(W)=\{Y_{m,t}+\Gamma'(t)-t\}f_{m,z}(Y_{m,t})$, while the terms with $\ell=1$ cancel exactly with $(\Gamma''(t)-1)\E_tf_{m,z}'(Y_{m,t})$ on the right-hand side of \eqref{mth-BE-identity}. The first line on the right-hand side of \eqref{mth-BE-expanded-identity} contains only derivatives $f_{m,z}^{(\ell)}(Y_{m,t})$, $2\leq\ell\leq m-1$, with coefficients bounded by $C_m\Gamma_{j,0}t^{j-1-\ell}$.  By \eqref{mth-BE-derivative-bound}, \eqref{mth-BE-centered-moment}, \eqref{mth-BE-polynomial-indicator}, and \eqref{mth-BE-small-interval}, it follows that
		\begin{align}\label{mth-BE-cumulant-bound}
			&\biggl|\sum_{j=3}^{m}\frac{\Gamma_j}{(j-1)!}
			\E_t\sum_{\ell=2}^{j-1}\binom{j-1}{\ell}
			t^{j-1-\ell}f_{m,z}^{(\ell)}(Y_{m,t})\biggr|\\
			\leq{}&C_m
			\sum_{j=3}^{m}\Gamma_{j,0}
			\sum_{\ell=2}^{j-1}t^{j-1-\ell}
			\biggl[
			e^{2\{\eta_m(t+1)+\kappa_m(t+1)\}}
			\biggl(1+\sum_{1\leq r\leq\ell-2}\rho_m^{-r}\biggr)
			+\rho_m^{1-\ell}\tilde{\Delta}_m(t)
			\biggr]. \notag
		\end{align}
		For the scalar remainder terms in \eqref{mth-BE-expanded-identity}, we use \eqref{mth-BE-basic-Stein-bounds} and \eqref{A2-moment} for derivatives of orders zero and one.  For the higher derivatives, we use \eqref{mth-BE-derivative-bound}, bound the interval indicators by one, apply \eqref{mth-BE-polynomial-moment} with $G=\mathcal R_{m,j}$, and then use \eqref{A2-moment} at $t\pm(1+t)^{-1}$.  Since $t+(1+t)^{-1}\leq t+1$ and $\kappa_m(s)$ and $r_{m,j}(s)$ are nondecreasing in $s\in[0,t+1]$, we obtain
		\begin{align}\label{mth-BE-remainder-bound}
			&\sum_{j=1}^{m}\left|\E_t\biggl\{
			\mathcal R_{m,j}\sum_{\ell=0}^{j-1}\binom{j-1}{\ell}
			t^{j-1-\ell}f_{m,z}^{(\ell)}(Y_{m,t})\biggr\}\right|\\
			\leq{}&C_me^{2\kappa_m(t+1)}
			\sum_{j=1}^{m}r_{m,j}(t+1)
			\biggl[
			jt^{j-1}
			+\sum_{r=1}^{j-2}(j-1-r)
			\rho_m^{-r}t^{j-2-r}
			\biggr]. \notag
		\end{align}
		It remains to estimate the two kernel integrals.  We present the estimate for the integral involving $\mathcal K_{m,1}(u)$. The integral involving $\mathcal K_{m,2}(u)$ is bounded in the same way.  Applying \eqref{mth-BE-derivative-bound}, \eqref{mth-BE-polynomial-indicator}, and \eqref{mth-BE-small-interval}, and then using \eqref{A3-kernel}, we obtain
		\begin{align}\label{mth-BE-kernel-bound}
			&\left|\E_t\int_{|u|\leq\rho_m}e^{tu}
			\sum_{\ell=0}^{m}\binom m\ell t^{m-\ell}
			f_{m,z}^{(\ell)}(Y_{m,t}+u)\mathcal K_{m,1}(u)du\right|\\ \notag
			\leq{}&
			C_me^{2\kappa_m(t+1)}\vartheta_m t^m
			+C_m\vartheta_m
			\sum_{\ell=2}^{m}t^{m-\ell}
			\biggl[
			e^{2\{\eta_m(t+1)+\kappa_m(t+1)\}}
			\sum_{1\leq r\leq\ell-2}\rho_m^{-r}
			+\rho_m^{1-\ell}\tilde{\Delta}_m(t)
			\biggr].
		\end{align}
		The nonempty range in Proposition \ref{prop-mth-tilted-concentration} implies $0<\rho_m\leq1$, while \eqref{mth-Delta-tilde-definition} yields $\tilde{\Delta}_m(t)\geq\rho_m e^{2\{\eta_m(t+1)+\kappa_m(t+1)\}}$. Consequently, for $2\leq\ell\leq m$,
		\[
		e^{2\{\eta_m(t+1)+\kappa_m(t+1)\}}
		\biggl(1+\sum_{1\leq r\leq\ell-2}\rho_m^{-r}\biggr)
		\leq C_m\tilde{\Delta}_m(t)
		\rho_m^{1-\ell}.
		\]
		Thus, in \eqref{mth-BE-cumulant-bound}, the term not containing $\tilde{\Delta}_m(t)$ is bounded by the term containing $\rho_m^{1-\ell}\tilde{\Delta}_m(t)$.  The same comparison applies to the two terms in brackets in \eqref{mth-BE-kernel-bound}.  Substituting \eqref{mth-BE-cumulant-bound}, \eqref{mth-BE-remainder-bound}, and \eqref{mth-BE-kernel-bound} into \eqref{mth-BE-expanded-identity} bounds the first term on the right-hand side of \eqref{mth-BE-smoothing-reduction}.  The interval-probability and smoothing terms in \eqref{mth-BE-smoothing-reduction} are bounded by \eqref{mth-BE-small-interval} and $\rho_m\leq\tilde{\Delta}_m(t)$, respectively.  Since $e^{2\kappa_m(t+1)}\geq1$, both are included in the first term of \eqref{mth-Delta-definition}.  This proves \eqref{mth-tilted-BE}.
	\end{proof}

\begin{proof}[Proof of Lemma \ref{prop-mth-original-BE}]
		Since the left-hand side of \eqref{mth-original-BE} is at most one, it suffices to consider sufficiently small $\Delta_m(0)$. The threshold depends only on $m$, and increasing $C_m$ covers the remaining case. Under this restriction, the definitions \eqref{mth-Delta-tilde-definition}, \eqref{mth-Delta-definition}, and \eqref{mth-concentration-length} then imply
		\begin{equation}\label{mth-original-smallness-bounds}
			0<\rho_m\leq\epsilon_m\leq1,\qquad
			\vartheta_m\leq1,\qquad
			\eta_m(1)+\kappa_m(1)
			\leq C_m\Delta_m(0)\leq1.
		\end{equation}

		With these smallness bounds, we follow the smoothing argument of Proposition \ref{prop-mth-tilted-BE} under the original measure. The first step is the following concentration inequality, obtained with the smoothing width $\epsilon_m$ from \eqref{mth-concentration-length}:
		\begin{equation}\label{mth-original-concentration}
			\sup_{z\in\mathbb R}\p(z\leq W\leq z+\rho_m)
			\leq C_m\tilde\Delta_m(0).
		\end{equation}
		We follow the argument of Proposition \ref{prop-mth-tilted-concentration}, except that we apply the original-measure Stein identity \eqref{mth-Stein} directly. For $r\geq0$, let $L_0(r)\coloneqq\sup_{z\in\mathbb R}\p(z\leq W\leq z+r)$. Fix $a<b$, and let $\mathfrak f_m(w)$ be the smoothing function defined in the proof of Proposition \ref{prop-mth-tilted-concentration}.  By \eqref{mth-smooth-basic-bounds},
		\begin{equation}\label{mth-original-concentration-basic}
			\p(a\leq W\leq b)\leq\E\mathfrak f_m'(W),
			\qquad
			\left|\E\{W\mathfrak f_m(W)\}\right|
			\leq\frac{b-a+\epsilon_m}{2}\E|W|
			\leq\frac{b-a+\epsilon_m}{2}.
		\end{equation}
		Moreover, by {\rm (A2)}, {\rm (A4)}, \eqref{mth-smooth-basic-bounds}, and \eqref{mth-smooth-derivative-bounds}, we have
		\begin{align}
			\sum_{j=3}^{m}\frac{|\Gamma_j|}{(j-1)!}
			\E|\mathfrak f_m^{(j-1)}(W)|
			&\leq
			2A_m\sum_{j=3}^{m}
			\frac{\Gamma_{j,0}\epsilon_m^{2-j}}{(j-1)!}
			L_0(\epsilon_m),\notag\\
			\sum_{j=1}^{m}
			\E\{|\mathcal R_{m,j}|
			|\mathfrak f_m^{(j-1)}(W)|\}&\leq
			\frac{b-a+\epsilon_m}{2}r_{m,1}(0)+r_{m,2}(0)
			+A_m\sum_{j=3}^{m}r_{m,j}(0)\epsilon_m^{2-j}.
			\label{mth-original-concentration-components}
		\end{align}
		Furthermore, using the support argument leading to \eqref{mth-kernel-iteration}, with $t=0$ and $L_t(r)$ replaced by $L_0(r)$, it follows that
		\begin{align}\label{mth-original-concentration-kernel}
			&\left|\E\int_{|u|\leq\rho_m}
			\mathfrak f_m^{(m)}(W+u)\mathcal K_{m,1}(u)du\right|
			+\left|\E\int_{|u|\leq\rho_m}
			\mathfrak f_m^{(m)}(W+Uu)\mathcal K_{m,2}(u)du\right| \notag\\
			\leq{}&4A_m\vartheta_m\epsilon_m^{1-m}L_0(\epsilon_m).
		\end{align}
		Therefore, applying \eqref{mth-Stein} to $\mathfrak f_m(w)$ and using \eqref{mth-original-concentration-basic}, \eqref{mth-original-concentration-components}, and \eqref{mth-original-concentration-kernel}, we obtain
		\begin{align}\label{mth-original-concentration-recursion}
			\p(a\leq W\leq b)
			\leq{}&C_m\biggl[
			(b-a+\epsilon_m)\{1+r_{m,1}(0)\}
			+r_{m,2}(0)
			+\sum_{j=3}^{m}r_{m,j}(0)\epsilon_m^{2-j}
			\biggr]\\
			&+\biggl[
			2A_m\sum_{j=3}^{m}
			\frac{\Gamma_{j,0}\epsilon_m^{2-j}}{(j-1)!}
			+4A_m\vartheta_m\epsilon_m^{1-m}
			\biggr]L_0(\epsilon_m).\notag
		\end{align}
		By \eqref{mth-concentration-length}, for $3\leq j\leq m$, $\Gamma_{j,0}\epsilon_m^{2-j}\leq(2^{m+4}A_m)^{2-j}$ and $\vartheta_m\epsilon_m^{1-m}\leq(2^{m+4}A_m)^{1-m}$. Consequently, the coefficient of $L_0(\epsilon_m)$ in \eqref{mth-original-concentration-recursion} is bounded by
		\[
			2A_m\sum_{j=3}^{m}
			\frac{(2^{m+4}A_m)^{2-j}}{(j-1)!}
			+4A_m(2^{m+4}A_m)^{1-m}<\frac12,
		\]
		where we used $m\geq3$ and $A_m\geq1$.  With $b=a+\epsilon_m$, taking the supremum over $a$ in \eqref{mth-original-concentration-recursion} and absorbing the $L_0(\epsilon_m)$-term yields
		\[
			L_0(\epsilon_m)
			\leq C_m\biggl[
			\epsilon_m\{1+r_{m,1}(0)\}+r_{m,2}(0)
			+\sum_{j=3}^{m}r_{m,j}(0)\epsilon_m^{2-j}
			\biggr]
			\leq C_m\tilde\Delta_m(0).
		\]
		By \eqref{mth-concentration-length}, \eqref{mth-Delta-tilde-definition}, and \eqref{mth-original-smallness-bounds}, \eqref{mth-original-concentration} follows.

		The concentration inequality \eqref{mth-original-concentration} controls the error from smoothing the indicator, leaving a normal Stein expectation to estimate. For fixed $z\in\mathbb R$, take the smoothed indicator $h_{m,z}(w)$ from the proof of Proposition \ref{prop-mth-tilted-BE}. Let $f_{m,z}(w)$ solve \eqref{mth-BE-stein-equation} with $t=0$. Since $\Gamma''(0)=1$, applying \eqref{mth-BE-smoothing-reduction} at $t=0$ and using \eqref{mth-original-concentration} and $\rho_m\leq\tilde\Delta_m(0)$ yields
		\begin{equation}\label{mth-original-BE-smoothing}
			|\p(W\leq z)-\Phi(z)|
			\leq
			|\E h_{m,z}(W)-\E h_{m,z}(Z)|
			+C_m\tilde\Delta_m(0),
			\qquad Z\sim N(0,1).
		\end{equation}
		It therefore suffices to bound $|\E h_{m,z}(W)-\E h_{m,z}(Z)|$. At $t=0$, \eqref{mth-Stein} with $f(w)=f_{m,z}(w)$ and \eqref{mth-BE-stein-equation} imply
		\begin{align}\label{mth-original-expanded-identity}
			&\E h_{m,z}(W)-\E h_{m,z}(Z) \notag\\
			={}&-\sum_{j=3}^{m}\frac{\Gamma_j}{(j-1)!}
			\E f_{m,z}^{(j-1)}(W)
			-\sum_{j=1}^{m}\E\{\mathcal R_{m,j}f_{m,z}^{(j-1)}(W)\}\\
			&-\E\int_{|u|\leq\rho_m}
			f_{m,z}^{(m)}(W+u)\mathcal K_{m,1}(u)du-\E\int_{|u|\leq\rho_m}
			f_{m,z}^{(m)}(W+Uu)\mathcal K_{m,2}(u)du.\notag
		\end{align}
		For the first term on the right-hand side of \eqref{mth-original-expanded-identity}, Proposition \ref{prop-mth-mgf} and \eqref{mth-original-smallness-bounds} imply, for $0\leq k\leq m-1$,
		\begin{equation}\label{mth-original-polynomial-moments}
			1+|w|^k\leq C_m(e^w+e^{-w}),\qquad
			\E(1+|W|^k)\leq C_m.
		\end{equation}
		Since $\Gamma''(0)=1$, \eqref{mth-BE-basic-Stein-bounds} and \eqref{mth-BE-derivative-bound} hold for $f_{m,z}(w)$ at $t=0$. Taking expectations in \eqref{mth-BE-derivative-bound} and using \eqref{mth-original-polynomial-moments}, \eqref{mth-original-concentration}, and \eqref{mth-Delta-tilde-definition}, we obtain, for $2\leq\ell\leq m$,
		\begin{align}
			\E|f_{m,z}^{(\ell)}(W)|
			&\leq C_m\rho_m^{2-\ell}
			+C_m\rho_m^{1-\ell}\p(z\leq W\leq z+\rho_m) \notag\\
			&\leq C_m\bigl\{\rho_m^{2-\ell}
			+\rho_m^{1-\ell}\tilde\Delta_m(0)\bigr\}
			\leq C_m\rho_m^{1-\ell}\tilde\Delta_m(0).
			\label{mth-original-Stein-derivatives}
		\end{align}
		Then, by \eqref{mth-original-Stein-derivatives}, it follows that
		\begin{equation}\label{mth-original-cumulant-bound}
			\sum_{j=3}^{m}\frac{|\Gamma_j|}{(j-1)!}
			\E |f_{m,z}^{(j-1)}(W)|\leq
			C_m\tilde\Delta_m(0)
			\sum_{j=3}^{m}\Gamma_{j,0}\rho_m^{2-j}.
		\end{equation}
		For the second term on the right-hand side of \eqref{mth-original-expanded-identity}, \eqref{mth-BE-basic-Stein-bounds} and \eqref{A2-moment} bound the summands with $j=1,2$ by $C_m\{r_{m,1}(1)+r_{m,2}(1)\}$.  For $3\leq j\leq m$, we discard the interval indicators in \eqref{mth-BE-derivative-bound}.  The first inequality in \eqref{mth-original-polynomial-moments}, Proposition \ref{prop-mth-mgf}, \eqref{A2-moment} and \eqref{mth-original-smallness-bounds} then yield
		\begin{equation}\label{mth-original-remainder-bound}
			\sum_{j=1}^{m}
			\left|\E\{\mathcal R_{m,j}f_{m,z}^{(j-1)}(W)\}\right|
			\leq C_m\biggl[
			r_{m,1}(1)+r_{m,2}(1)
			+\sum_{j=3}^{m}r_{m,j}(1)\rho_m^{2-j}
			\biggr].
		\end{equation}
		For the last two terms on the right-hand side of \eqref{mth-original-expanded-identity}, the indicators corresponding to $r=m-1$ in \eqref{mth-BE-derivative-bound} restrict $W$ to an interval of length at most $3\rho_m$.  Thus \eqref{mth-BE-derivative-bound}, \eqref{mth-original-polynomial-moments}, \eqref{mth-original-concentration}, and \eqref{A3-kernel} at $t=0$ yield
		\begin{align}\label{mth-original-kernel-bound}
			&\left|\E\int_{|u|\leq\rho_m}
			f_{m,z}^{(m)}(W+u)\mathcal K_{m,1}(u)du\right|
			+\left|\E\int_{|u|\leq\rho_m}
			f_{m,z}^{(m)}(W+Uu)\mathcal K_{m,2}(u)du\right|\notag\\
			\leq& C_m\vartheta_m\rho_m^{2-m}
			+C_m\vartheta_m\rho_m^{1-m}
			\sup_{y\in\mathbb R}\p(y\leq W\leq y+3\rho_m)
			\leq
			C_m\vartheta_m\rho_m^{1-m}\tilde\Delta_m(0).
		\end{align}
		Therefore, combining \eqref{mth-original-BE-smoothing}, \eqref{mth-original-expanded-identity}, and \eqref{mth-original-cumulant-bound}-- \eqref{mth-original-kernel-bound}, we obtain
		\begin{align}\label{mth-original-BE-collected}
			|\p(W\leq z)-\Phi(z)|
			\leq{}&C_m\tilde\Delta_m(0)\biggl[
			1+\sum_{j=3}^{m}\Gamma_{j,0}\rho_m^{2-j}
			+\vartheta_m\rho_m^{1-m}
			\biggr] \notag\\
			&\quad+C_m\biggl[
			r_{m,1}(1)+r_{m,2}(1)
			+\sum_{j=3}^{m}r_{m,j}(1)\rho_m^{2-j}
			\biggr]\notag\\
			\leq{}&C_m\Delta_m(0),
		\end{align}
		where the second inequality in \eqref{mth-original-BE-collected} follows from \eqref{mth-Delta-definition}, since, at $t=0$,
		\begin{align*}
			\Delta_m(0)\geq
			\tilde\Delta_m(0)\biggl[
			1+\sum_{j=3}^{m}\Gamma_{j,0}\rho_m^{2-j}
			+\vartheta_m\rho_m^{1-m}
			\biggr]+r_{m,1}(1)+r_{m,2}(1)
			+\sum_{j=3}^{m}r_{m,j}(1)\rho_m^{2-j}.
		\end{align*}
		Taking the supremum over $z\in\mathbb R$ proves \eqref{mth-original-BE}.
	\end{proof}

\appendix
\section*{\fontsize{14}{16}\selectfont Appendix}

\section{Proof of Lemma \ref{lemma-local-fourth-stein}}\label{appB}

The proofs of Lemmas \ref{lemma-local-fourth-stein}--\ref{lemma-cw-fourth-stein} follow the same two steps. Step 1 constructs the higher-order Stein identity, and Step 2 relates its coefficients to the cumulants and establishes the stated bounds.

\begin{proof}[Proof of Lemma \ref{lemma-local-fourth-stein}]
\par\smallskip\noindent\textit{Step 1.}\quad We first write the Taylor remainders in a common kernel form: for $0\leq r\leq3$, define $T_{r,y}(u)\coloneqq(-1)^{3-r}(u+y)^{3-r}H_y(u)/(3-r)!$. Taylor's expansion with integral remainder gives the full expansion
\begin{equation}\label{local-direct-taylor}
 f^{(r)}(w)-f^{(r)}(w-y)
 =\sum_{q=1}^{3-r}\frac{(-1)^{q+1}y^q}{q!}f^{(r+q)}(w)
 +\int_{|u|\leq\kappa\delta}f^{(4)}(w+u)T_{r,y}(u)du,
 \qquad |y|\leq\kappa\delta.
\end{equation}
In particular, $T_{3,y}(u)=H_y(u)$.  Each kernel is supported between $-y$ and $0$, and
\begin{equation}\label{local-taylor-kernel-mass}
 \int_{|u|\leq\kappa\delta}|T_{r,y}(u)|du
 =\frac{|y|^{4-r}}{(4-r)!}.
\end{equation}
Each neighborhood sum in \eqref{local-neighborhood-sums} is bounded in absolute value by $\kappa\delta$, so \eqref{local-direct-taylor} applies to every neighborhood shift. For each centered coefficient, local independence allows us to subtract a test function evaluated at the sum outside the next neighborhood. The coefficients below collect the terms produced by Taylor's expansion of these differences. In the following sums, the indices range over $i\in\mathcal J$, $j\in A_i$, $k\in A_{ij}$ and $l\in A_{ijk}$, with unused trailing indices omitted. Define
\begin{align}
 Q_{ijk}&\coloneqq\{X_iX_j-\E(X_iX_j)\}X_k
 -\frac12 X_iX_jX_k\I(k\in A_i),\label{local-triple-coefficient}\displaybreak[1]\\
 V_{ijkl}&\coloneqq\{Q_{ijk}-\E Q_{ijk}\}X_l
 -\frac12\{X_iX_j-\E(X_iX_j)\}X_kX_l\I(l\in A_{ij}) \notag\\
 &\quad+\frac16 X_iX_jX_kX_l\I(k,l\in A_i).
 \label{local-quadruple-coefficient}
\end{align}
The resulting fourth-order remainder kernel is
\begin{align}\label{local-fourth-kernel-definitions}
 \mathcal K_{4,1}(u)\coloneqq{}&\sum_i X_iT_{0,Y_i}(u)
 +\sum_{i,j}\{X_iX_j-\E(X_iX_j)\}T_{1,Y_{ij}}(u)\notag\\*
 &+\sum_{i,j,k}\{Q_{ijk}-\E Q_{ijk}\}T_{2,Y_{ijk}}(u)
 +\sum_{i,j,k,l}\{V_{ijkl}-\E V_{ijkl}\}H_{Y_{ijkl}}(u).
\end{align}
To verify \eqref{local-fourth-kernel-definitions}, first use {\rm (LD1)} and $\E X_i=0$ to write
\begin{equation}\label{local-basic-stein}
 \E\{Wf(W)\}
 =\sum_i\E\bigl[X_i\{f(W)-f(W-Y_i)\}\bigr].
\end{equation}
Apply \eqref{local-direct-taylor} with $r=0$ in \eqref{local-basic-stein}. The coefficient of $f'(W)$ is $\sum_{i,j}X_iX_j$, whose expectation is $\E W^2=1$. Condition {\rm (LD$^*$2)} gives
\begin{equation}\label{local-pair-centering}
 \E\bigl[\{X_iX_j-\E(X_iX_j)\}f'(W)\bigr]
 =\E\bigl[\{X_iX_j-\E(X_iX_j)\}\{f'(W)-f'(W-Y_{ij})\}\bigr].
\end{equation}
Expanding \eqref{local-pair-centering} by \eqref{local-direct-taylor} with $r=1$ gives the coefficient $\sum_{i,j,k}Q_{ijk}$ of $f''(W)$. The remaining centered coefficients satisfy
\begin{align}
 \E\bigl[\{Q_{ijk}-\E Q_{ijk}\}f''(W)\bigr]
 &=\E\bigl[\{Q_{ijk}-\E Q_{ijk}\}
 \{f''(W)-f''(W-Y_{ijk})\}\bigr],\label{local-triple-centering}\displaybreak[1]\\
 \E\bigl[\{V_{ijkl}-\E V_{ijkl}\}f^{(3)}(W)\bigr]
 &=\E\bigl[\{V_{ijkl}-\E V_{ijkl}\}
 \{f^{(3)}(W)-f^{(3)}(W-Y_{ijkl})\}\bigr].\label{local-quadruple-centering}
\end{align}
Here \eqref{local-triple-centering} and \eqref{local-quadruple-centering} follow from {\rm (LD$^*$3)} and {\rm (LD$^*$4)}, respectively. Applying \eqref{local-direct-taylor} with $r=2$ in \eqref{local-triple-centering} gives the coefficient $\sum_{i,j,k,l}V_{ijkl}$ of $f^{(3)}(W)$. Finally, use $r=3$ in \eqref{local-quadruple-centering} and collect the four integral remainders to obtain
\begin{align}\label{local-fourth-stein-pre}
 \E\{Wf(W)\}
 &=\E f'(W)+\biggl(\sum_{i,j,k}\E Q_{ijk}\biggr)\E f''(W)
 +\biggl(\sum_{i,j,k,l}\E V_{ijkl}\biggr)\E f^{(3)}(W)\notag\\*
 &\quad+\E\int_{|u|\leq\kappa\delta}f^{(4)}(W+u)\mathcal K_{4,1}(u)du.
\end{align}
\par\smallskip\noindent\textit{Step 2.}\quad Taking $f(w)=w^2$ and $f(w)=w^3$ in \eqref{local-fourth-stein-pre} identifies the coefficients as
\begin{equation}\label{local-cumulant-coefficients}
 \Gamma_3=2\sum_{i,j,k}\E Q_{ijk},\qquad
 \Gamma_4=6\sum_{i,j,k,l}\E V_{ijkl}.
\end{equation}
Substituting \eqref{local-cumulant-coefficients} into \eqref{local-fourth-stein-pre} proves \eqref{local-fourth-stein}.

It remains to bound the coefficients and kernel. Boundedness gives $|X_iX_j-\E(X_iX_j)|\leq2\delta^2$, $|Q_{ijk}|\leq3\delta^3$ and $|V_{ijkl}|\leq8\delta^4$. The respective numbers of summands are at most $n\kappa$, $n\kappa^2$ and $n\kappa^3$. Consequently, \eqref{local-cumulant-coefficients} gives the cumulant bounds in \eqref{local-fourth-size-bounds}. Since every neighborhood sum has absolute value at most $\kappa\delta$, the support assertion follows from \eqref{local-fourth-kernel-definitions}. Moreover, \eqref{local-taylor-kernel-mass} yields the pathwise bound
\begin{equation}\label{local-kernel-size-bound}
 \int_{|u|\leq\kappa\delta}|\mathcal K_{4,1}(u)|du
 \leq C\{n\delta(\kappa\delta)^4+n\kappa\delta^2(\kappa\delta)^3
 +n\kappa^2\delta^3(\kappa\delta)^2+n\kappa^3\delta^4\kappa\delta\}
 \leq Cn\kappa^4\delta^5.
\end{equation}
This proves \eqref{local-fourth-size-bounds}.
\end{proof}

		\Needspace{6\baselineskip}
		\section{Proof of Lemma \ref{lemma-cclt-third-stein}}\label{appC}

		\begin{proof}[Proof of Lemma \ref{lemma-cclt-third-stein}]
			\leavevmode
			\par\smallskip\noindent\textit{Step 1.}\quad To expand the kernel-mass fluctuations, we need coupled permutations with one, two, or three prescribed matches. We construct them using the prescribed-pair coupling of Lemma 7.1 in \citet{LZ2023}. Throughout the proof, we retain the notation of Sections \ref{subsubsec-third-order-Stein-derivation} and \ref{subsec-proof-theorem33}, and write $b_{i,j}\coloneqq\E X_{i,j}^{2}$. Denote by $\tau_{i,j}$ the transposition of $i$ and $j$, and read compositions from right to left. For one prescribed pair $(i,j)$, define
			\[
			\pi_{i,j}\coloneqq
			\begin{cases}
				\pi, & \text{if }\pi(i)=j,\\
				\pi\circ\tau_{i,\pi^{-1}(j)}, & \text{if }\pi(i)\neq j.
			\end{cases}
			\]
			Then $\pi_{i,j}(i)=j$.  For two prescribed pairs $\mathbf i=(i_1,i_2)\in[n]_2$ and $\mathbf j=(j_1,j_2)\in[n]_2$, define
			\[
			\pi_{\mathbf i,\mathbf j}\coloneqq
			\begin{cases}
				\pi, & \text{if }\pi(i_1)=j_1,\ \pi(i_2)=j_2,\\
				\pi\circ\tau_{i_1,\pi^{-1}(j_1)}, & \text{if }\pi(i_1)\neq j_1,\ \pi(i_2)=j_2,\\
				\pi\circ\tau_{i_2,\pi^{-1}(j_2)}, & \text{if }\pi(i_1)=j_1,\ \pi(i_2)\neq j_2,\\
				\pi\circ\tau_{i_1,\pi^{-1}(j_2)}\circ\tau_{i_2,\pi^{-1}(j_1)}
				\circ\tau_{i_1,i_2}, & \text{if }\pi(i_1)\neq j_1,\ \pi(i_2)\neq j_2.
			\end{cases}
			\]
			Then $\pi_{\mathbf i,\mathbf j}(i_1)=j_1$ and $\pi_{\mathbf i,\mathbf j}(i_2)=j_2$.  For three prescribed pairs $\mathbf r=(r_1,r_2,r_3)\in[n]_3$ and $\mathbf s=(s_1,s_2,s_3)\in[n]_3$, define the sequence recursively by $\pi^{(0)}\coloneqq\pi$ and, for $q=1,2,3$,
			\[
			\pi^{(q)}\coloneqq
			\begin{cases}
				\pi^{(q-1)}, & \text{if }\pi^{(q-1)}(r_q)=s_q,\\
				\pi^{(q-1)}\circ\tau_{r_q,(\pi^{(q-1)})^{-1}(s_q)},
				& \text{if }\pi^{(q-1)}(r_q)\neq s_q.
			\end{cases}
			\]
			Define $\pi_{\mathbf r,\mathbf s}\coloneqq\pi^{(3)}$.  Then $\pi_{\mathbf r,\mathbf s}(r_q)=s_q$ for $q=1,2,3$.  With these constructions,
			\begin{equation}\label{cclt-prescribed-pair-laws}
			\begin{aligned}
				\mathcal L(\pi_{i,j})&=\mathcal L\bigl(\pi\mid\pi(i)=j\bigr),\qquad
				\mathcal L(\pi_{\mathbf i,\mathbf j})=\mathcal L\bigl(\pi\mid\pi(i_q)=j_q,\ 1\leq q\leq2\bigr),\\
				\mathcal L(\pi_{\mathbf r,\mathbf s})&=\mathcal L\bigl(\pi\mid\pi(r_q)=s_q,\ 1\leq q\leq3\bigr).
			\end{aligned}
			\end{equation}
			Here $\mathcal L(\cdot)$ denotes the probability distribution.
			
			For one prescribed pair $(i,j)$, define $W_{i,j}\coloneqq\sum_{k=1}^{n}X_{k,\pi_{i,j}(k)}$ and $W_{i,j}^{(i)}\coloneqq W_{i,j}-X_{i,j}$. Then $\mathcal L(W_{i,j})=\mathcal L(W\mid \pi(i)=j)$, and $X_{i,j}$ is independent of $W_{i,j}^{(i)}$.  For $\mathbf i=(i_1,i_2)\in[n]_2$ and $\mathbf j=(j_1,j_2)\in[n]_2$, define
			\[
			W_{\mathbf i,\mathbf j}\coloneqq\sum_{k=1}^{n}X_{k,\pi_{\mathbf i,\mathbf j}(k)},\qquad
			W_{\mathbf i,\mathbf j}^{(\mathbf i)}\coloneqq W_{\mathbf i,\mathbf j}-X_{i_1,j_1}-X_{i_2,j_2}.
			\]
			Thus $\mathcal L(W_{\mathbf i,\mathbf j})=\mathcal L(W\mid \pi(i_1)=j_1,\pi(i_2)=j_2)$, and the selected variables $X_{i_1,j_1},X_{i_2,j_2}$ are independent of $W_{\mathbf i,\mathbf j}^{(\mathbf i)}$.  Define
			\[
			\Delta_{i,j}\coloneqq W-W_{i,j},\quad
			\Delta_{i,j}^{(i)}\coloneqq W-W_{i,j}^{(i)},\quad
			\Delta_{\mathbf i,\mathbf j}\coloneqq W-W_{\mathbf i,\mathbf j},\quad
			\Delta_{\mathbf i,\mathbf j}^{(\mathbf i)}\coloneqq W-W_{\mathbf i,\mathbf j}^{(\mathbf i)}.
			\]
			For the three-pair construction, define $W_{\mathbf r,\mathbf s}\coloneqq\sum_{k=1}^{n}X_{k,\pi_{\mathbf r,\mathbf s}(k)}$ and $\Delta_{\mathbf r,\mathbf s}\coloneqq W-W_{\mathbf r,\mathbf s}$. By \eqref{cclt-bounded}, the changes in $W$ caused by imposing one, two, or three prescribed pairs are bounded in absolute value by $\delta/3$, $2\delta/3$, and $\delta$, respectively.
			
			Using \eqref{cclt-prescribed-pair-laws}, the independence of $X_{i,j}$ from $W_{i,j}^{(i)}$, and the independence of $X_{i_1,j_1},X_{i_2,j_2}$ from $W_{\mathbf i,\mathbf j}^{(\mathbf i)}$, we now expand \eqref{cclt-basic-stein} further.  By \eqref{cclt-K},
			\[
			\hat L=\int_{|u|\leq\delta/3}\hat K(u)du-1
			=\frac1{4n}\sum_{\mathbf i\in[n]_2}D_{\mathbf i}^2-1.
			\]
			Furthermore,
			\[
			|\E \hat L|=|\E\{WR\}|
			=\frac1{n^2}\sum_{i=1}^{n}\sum_{j=1}^{n}c_{i,j}^{2}
			\leq \frac1n,
			\]
			and a direct expansion of $D_{\mathbf i}^{2}$ yields
			\begin{align}\label{cclt-D-square}
				\sum_{\mathbf i\in[n]_2}D_{\mathbf i}^{2}
				={}&2n\sum_{i=1}^{n}X_{i,\pi(i)}^2
				+2\sum_{i=1}^{n}\sum_{j=1}^{n}X_{i,j}^2+2\sum_{i_1=1}^{n}\sum_{i_2=1}^{n}
				\{X_{i_1,\pi(i_1)}X_{i_2,\pi(i_2)}
				+X_{i_1,\pi(i_2)}X_{i_2,\pi(i_1)}\}\notag\\
				&\quad-2\sum_{i_1=1}^{n}\sum_{i_2=1}^{n}
				\bigl\{X_{i_1,\pi(i_2)}X_{i_2,\pi(i_2)}
				+X_{i_2,\pi(i_1)}X_{i_2,\pi(i_2)}\notag\\
				&\hspace{5.5em}+X_{i_1,\pi(i_2)}X_{i_1,\pi(i_1)}
				+X_{i_2,\pi(i_1)}X_{i_1,\pi(i_1)}\bigr\}.
			\end{align}
			We now expand $\E\{\hat Lf'(W)\}$ by conditioning on the matches selected by each product in \eqref{cclt-D-square}. For the square term, since $X_{i,j}$ is independent of $W_{i,j}^{(i)}$, we have
			\begin{align}\label{cclt-one-pair-example}
				&\E\{(X_{i,\pi(i)}^2-\E X_{i,\pi(i)}^2)f(W)\} \notag\\
				={}&\frac1n\sum_{j=1}^{n}\E\{X_{i,j}^2f(W)\mid \pi(i)=j\}
				-\frac1n\sum_{j=1}^{n}b_{i,j}\E f(W)\\
				={}&\frac1n\sum_{j=1}^{n}\E\{X_{i,j}^2[f(W_{i,j})-f(W_{i,j}^{(i)})]\}
				-\frac1n\sum_{j=1}^{n}b_{i,j}\E\{f(W)-f(W_{i,j}^{(i)})\} \notag\\
				={}&\frac1n\sum_{j=1}^{n}\E\int_{|u|\leq\delta/3}
				f'(W+u)\{X_{i,j}^2[H_{\Delta_{i,j}^{(i)}}(u)-H_{\Delta_{i,j}}(u)]
				-b_{i,j}H_{\Delta_{i,j}^{(i)}}(u)\}du. \notag
			\end{align}
			For the term involving $X_{i_1,\pi(i_1)}X_{i_2,\pi(i_2)}$ in \eqref{cclt-D-square}, conditioning on $\{\pi(i_1)=j_1,\pi(i_2)=j_2\}$ yields
			\begin{align}\label{cclt-two-pair-direct}
				&\E\{[X_{i_1,\pi(i_1)}X_{i_2,\pi(i_2)}
				-\E X_{i_1,\pi(i_1)}X_{i_2,\pi(i_2)}]f(W)\}\\
				={}&\frac1{n(n-1)}\sum_{\mathbf j\in[n]_2}
				\E\!\int_{|u|\leq2\delta/3}\!f'(W+u)
				\{X_{i_1,j_1}X_{i_2,j_2}
				[H_{\Delta_{\mathbf i,\mathbf j}^{(\mathbf i)}}(u)-H_{\Delta_{\mathbf i,\mathbf j}}(u)] 
				-a_{i_1,j_1}a_{i_2,j_2}
				H_{\Delta_{\mathbf i,\mathbf j}^{(\mathbf i)}}(u)\}du. \notag
			\end{align}
			Similarly, we obtain
			\begin{align}\label{cclt-two-pair-cross}
				&\E\{[X_{i_1,\pi(i_2)}X_{i_2,\pi(i_1)}
				-\E X_{i_1,\pi(i_2)}X_{i_2,\pi(i_1)}]f(W)\}\\*
				={}&\frac1{n(n-1)}\sum_{\mathbf j\in[n]_2}
				\E\!\int_{|u|\leq2\delta/3}\!f'(W+u)
				\{X_{i_1,j_2}X_{i_2,j_1}
				[H_{\Delta_{\mathbf i,\mathbf j}^{(\mathbf i)}}(u)-H_{\Delta_{\mathbf i,\mathbf j}}(u)] 
				-a_{i_1,j_2}a_{i_2,j_1}
				H_{\Delta_{\mathbf i,\mathbf j}^{(\mathbf i)}}(u)\}du. \notag
			\end{align}
			The remaining four products in \eqref{cclt-D-square} are handled analogously.  In the corresponding versions of \eqref{cclt-two-pair-direct}--\eqref{cclt-two-pair-cross}, the random products are, respectively,
			\begin{equation}\label{cclt-two-pair-analogues}
			X_{i_1,j_2}X_{i_2,j_2},\quad
			X_{i_2,j_1}X_{i_2,j_2},\quad
			X_{i_1,j_2}X_{i_1,j_1},\quad
			X_{i_2,j_1}X_{i_1,j_1}.
			\end{equation}
			Each centering term is the corresponding product of means. Substituting \eqref{cclt-one-pair-example} and \eqref{cclt-two-pair-direct}--\eqref{cclt-two-pair-cross}, together with the four analogues in \eqref{cclt-two-pair-analogues}, into \eqref{cclt-D-square} and collecting the integral terms, we obtain
			\begin{equation}\label{cclt-L-K1}
				\E\{\hat Lf'(W)\}
				=\E\int_{|u|\leq2\delta/3}f''(W+u)\hat K_1(u)du
				+\E\{R_1f'(W)\},
			\end{equation}
			where\nopagebreak[4]
			\begin{align}\label{cclt-K1-explicit}
				\hat K_1(u)
				&=\frac{1}{2n}\sum_{i=1}^{n}\sum_{j=1}^{n}
				\{X_{i,j}^{2}[H_{\Delta_{i,j}^{(i)}}(u)-H_{\Delta_{i,j}}(u)]
				-b_{i,j}H_{\Delta_{i,j}^{(i)}}(u)\}\\
				&\quad+\frac{1}{2n^2(n-1)}
				\sum_{\mathbf i\in[n]_2}\sum_{\mathbf j\in[n]_2}
				\{B_{\mathbf i,\mathbf j}[H_{\Delta_{\mathbf i,\mathbf j}^{(\mathbf i)}}(u)
				-H_{\Delta_{\mathbf i,\mathbf j}}(u)]
				-\E[B_{\mathbf i,\mathbf j}]H_{\Delta_{\mathbf i,\mathbf j}^{(\mathbf i)}}(u)\}. \notag
			\end{align}
			Here
			\begin{align*}
				B_{\mathbf i,\mathbf j}
				&\coloneqq X_{i_1,j_1}X_{i_2,j_2}+X_{i_1,j_2}X_{i_2,j_1}
				-X_{i_1,j_2}X_{i_2,j_2}-X_{i_2,j_1}X_{i_2,j_2}-X_{i_1,j_2}X_{i_1,j_1}-X_{i_2,j_1}X_{i_1,j_1}.
			\end{align*}
			The remainder in \eqref{cclt-L-K1} is
			\begin{equation}\label{cclt-R1-explicit}
				R_1=\frac1{4n}\sum_{\mathbf i\in[n]_2}\E D_{\mathbf i}^{2}-1
				+\frac1{2n}\sum_{i=1}^{n}\sum_{j=1}^{n}(X_{i,j}^{2}-b_{i,j})
				-\frac1n\sum_{i=1}^{n}\{X_{i,\pi(i)}^{2}-\E X_{i,\pi(i)}^{2}\}.
			\end{equation}
			Furthermore, by \eqref{cclt-K1-explicit},
			\begin{align}\label{cclt-K1-integral}
				\int_{|u|\leq2\delta/3}\hat K_1(u)du
				&=\frac1{2n}\sum_{i=1}^{n}\sum_{j=1}^{n}
				\{X_{i,j}^{3}-b_{i,j}\Delta_{i,j}^{(i)}\}\\*
				&\quad+\frac{1}{2n^2(n-1)}
				\sum_{\mathbf i\in[n]_2}\sum_{\mathbf j\in[n]_2}
				\{B_{\mathbf i,\mathbf j}(X_{i_1,j_1}+X_{i_2,j_2})
				-\E[B_{\mathbf i,\mathbf j}]\Delta_{\mathbf i,\mathbf j}^{(\mathbf i)}\}.\notag
			\end{align}
			Here
			\begin{align}
				\Delta_{i,j}^{(i)}
				&=\I(\pi(i)\neq j)\{X_{i,\pi(i)}+X_{\pi^{-1}(j),j}
				-X_{\pi^{-1}(j),\pi(i)}\}
				+\I(\pi(i)=j)X_{i,\pi(i)} \notag\\
				&=X_{i,\pi(i)}+X_{\pi^{-1}(j),j}
				-X_{\pi^{-1}(j),\pi(i)},\label{cclt-Delta-one-explicit}\displaybreak[1]\\
				\Delta_{\mathbf i,\mathbf j}^{(\mathbf i)}
				&=\I(\pi(i_1)=j_1,\pi(i_2)=j_2)
				\{X_{i_1,\pi(i_1)}+X_{i_2,\pi(i_2)}\} \notag\\
				&\quad+\I(\pi(i_1)=j_1,\pi(i_2)\neq j_2)
				\bigl\{X_{i_1,\pi(i_1)}+X_{i_2,\pi(i_2)} \notag\\
				&\hspace{7em}+X_{\pi^{-1}(j_2),j_2}-X_{\pi^{-1}(j_2),\pi(i_2)}\bigr\}\notag\\
				&\quad+\I(\pi(i_1)\neq j_1,\pi(i_2)=j_2)
				\bigl\{X_{i_1,\pi(i_1)}+X_{i_2,\pi(i_2)} \notag\\
				&\hspace{7em}+X_{\pi^{-1}(j_1),j_1}-X_{\pi^{-1}(j_1),\pi(i_1)}\bigr\}\notag\\
				&\quad+\I(\pi(i_1)\neq j_1,\pi(i_2)\neq j_2)
				\bigl\{X_{i_1,\pi(i_1)}+X_{i_2,\pi(i_2)} \notag\\
				&\hspace{7em}
				+X_{\pi^{-1}(j_1),j_1}+X_{\pi^{-1}(j_2),j_2}
				-X_{\pi^{-1}(j_1),\pi(i_2)}
				-X_{\pi^{-1}(j_2),\pi(i_1)}\bigr\} \notag\\
				&=X_{i_1,\pi(i_1)}+X_{i_2,\pi(i_2)}
				+X_{\pi^{-1}(j_1),j_1}+X_{\pi^{-1}(j_2),j_2} \notag\\
				&\quad-X_{\pi^{-1}(j_1),\pi(i_2)}
				-X_{\pi^{-1}(j_2),\pi(i_1)} \notag\\
				&\quad+\I(\pi(i_1)=j_1\text{ or }\pi(i_2)=j_2)\bigl\{X_{\pi^{-1}(j_1),\pi(i_2)}
				+X_{\pi^{-1}(j_2),\pi(i_1)} \notag\\
				&\hspace{7em}-X_{\pi^{-1}(j_1),\pi(i_1)}
				-X_{\pi^{-1}(j_2),\pi(i_2)}\bigr\}. \label{cclt-Delta-two-explicit}
			\end{align}

			Set $\gamma_{3,1}=2\E\int_{|u|\leq2\delta/3}\hat K_1(u)du$. To obtain the next derivative in the Stein identity, we expand the centered part of $\int_{|u|\leq2\delta/3}\hat K_1(u)du$ in \eqref{cclt-K1-integral}. This is $\hat L_1$ from \eqref{third-L1-definition}. For the centered term involving $X_{i,j}^3$ in \eqref{cclt-K1-integral}, the law of total expectation yields
			\begin{align}\notag
				\E\{(X_{i,j}^3-\E X_{i,j}^3)h(W)\}
				&=\sum_{k=1}^{n}\E\{(X_{i,j}^3-\E X_{i,j}^3)h(W)\mid\pi(i)=k\}\p(\pi(i)=k)\\*
				&=\frac1n\E\{(X_{i,j}^3-\E X_{i,j}^3)h(W)\mid\pi(i)=j\}, \notag
			\end{align}
			where the second equality follows from the fact that $X_{i,j}$ is independent of $W$ given $\pi(i)\neq j$.  Since $X_{i,j}$ is independent of $W_{i,j}^{(i)}$, applying \eqref{difference-formula} to $h(W_{i,j})-h(W_{i,j}^{(i)})$ yields
			\begin{align}\notag
				\E\{(X_{i,j}^3-\E X_{i,j}^3)h(W_{i,j})\}
				=\E\int_{|u|\leq\delta/3}h'(W+u)(X_{i,j}^3-\E X_{i,j}^3)
				[H_{\Delta_{i,j}^{(i)}}(u)-H_{\Delta_{i,j}}(u)]du.
			\end{align}
			Therefore, we obtain
			\begin{align}\label{cclt-X3-sum}
				&\frac1{2n}\sum_{i=1}^{n}\sum_{j=1}^{n}
				\E\{(X_{i,j}^3-\E X_{i,j}^3)h(W)\}\\*
				={}&\frac1{2n^2}\sum_{i=1}^{n}\sum_{j=1}^{n}
				\E\int_{|u|\leq\delta/3}h'(W+u)(X_{i,j}^3-\E X_{i,j}^3)
				[H_{\Delta_{i,j}^{(i)}}(u)-H_{\Delta_{i,j}}(u)]du.\notag
			\end{align}
			For the centered term involving $\Delta_{i,j}^{(i)}$ in \eqref{cclt-K1-integral}, substitute \eqref{cclt-Delta-one-explicit} and apply the conditioning calculation used in \eqref{cclt-one-pair-example}:
			\begin{align}\label{cclt-Delta-center-expand}
				&\E\{[\E\Delta_{i,j}^{(i)}-\Delta_{i,j}^{(i)}]h(W)\}\\
				={}&\E\{[\E X_{i,\pi(i)}-X_{i,\pi(i)}]h(W)\}
				+\E\{[\E X_{\pi^{-1}(j),j}-X_{\pi^{-1}(j),j}]h(W)\} \notag\\
				&\quad+\E\{[X_{\pi^{-1}(j),\pi(i)}
				-\E X_{\pi^{-1}(j),\pi(i)}]h(W)\} \notag\\
				={}&\frac1n\sum_{l=1}^{n}
				\E\int_{|u|\leq\delta/3}h'(W+u)
				\{a_{i,l}H_{\Delta_{i,l}^{(i)}}(u)
				-X_{i,l}[H_{\Delta_{i,l}^{(i)}}(u)-H_{\Delta_{i,l}}(u)]\}du \notag\\
				&\quad+\frac1n\sum_{k=1}^{n}
				\E\int_{|u|\leq\delta/3}h'(W+u)
				\{a_{k,j}H_{\Delta_{k,j}^{(k)}}(u)
				-X_{k,j}[H_{\Delta_{k,j}^{(k)}}(u)-H_{\Delta_{k,j}}(u)]\}du \notag\\
				&\quad+\E\{[X_{\pi^{-1}(j),\pi(i)}
				-\E X_{\pi^{-1}(j),\pi(i)}]h(W)\}. \notag
			\end{align}
			The last term in \eqref{cclt-Delta-center-expand} is expanded as
			\begin{align}\label{cclt-last-cross-expand}
				&\E\{[X_{\pi^{-1}(j),\pi(i)}
				-\E X_{\pi^{-1}(j),\pi(i)}]h(W)\}\\
				={}&\frac1n\E\{X_{i,j}h(W_{i,j})\}
				+\frac1{n(n-1)}
				\sum_{\substack{k=1\\ k\neq i}}^{n}
				\sum_{\substack{l=1\\ l\neq j}}^{n}
				a_{k,l}\E h(W_{(i,k),(l,j)})
				-\frac{a_{i,j}}{n-1}\E h(W) \notag\\
				={}&\frac1n\E\int_{|u|\leq\delta/3}h'(W+u)
				\{X_{i,j}[H_{\Delta_{i,j}^{(i)}}(u)-H_{\Delta_{i,j}}(u)]
				-a_{i,j}H_{\Delta_{i,j}^{(i)}}(u)\}du \notag\\
				&\quad-\frac1{n(n-1)}
				\sum_{\substack{k=1\\ k\neq i}}^{n}
				\sum_{\substack{l=1\\ l\neq j}}^{n}a_{k,l}
				\E\int_{|u|\leq2\delta/3}h'(W+u)H_{\Delta_{(i,k),(l,j)}}(u)du. \notag
			\end{align}
			
			The six products in $B_{\mathbf i,\mathbf j}$ are treated in the same way.  We present the details for the centered term corresponding to $X_{i_1,j_1}X_{i_2,j_2}(X_{i_1,j_1}+X_{i_2,j_2})$.  Let $\mathbf l=(l_1,l_2)$.  Then
			\begin{align}\label{cclt-two-pair-cubic}
				&\E\{[X_{i_1,j_1}X_{i_2,j_2}(X_{i_1,j_1}+X_{i_2,j_2})
				-(b_{i_1,j_1}a_{i_2,j_2}+b_{i_2,j_2}a_{i_1,j_1})]h(W)\}\\
				={}&\frac1{n(n-1)}
				\sum_{\substack{\mathbf l\in[n]_2\\ l_1=j_1\text{ or }l_2=j_2}}
				\E\{[X_{i_1,j_1}X_{i_2,j_2}(X_{i_1,j_1}+X_{i_2,j_2})-(b_{i_1,j_1}a_{i_2,j_2}+b_{i_2,j_2}a_{i_1,j_1})]h(W_{\mathbf i,\mathbf l})\} \notag\\
				={}&\frac1{n(n-1)}
				\sum_{\substack{\mathbf l\in[n]_2\\ l_1=j_1\text{ or }l_2=j_2}}
				\E\int_{|u|\leq2\delta/3}h'(W+u)\{[(b_{i_1,j_1}a_{i_2,j_2}+b_{i_2,j_2}a_{i_1,j_1}) \notag\\
				&\qquad\qquad\qquad\qquad\qquad
				-X_{i_1,j_1}X_{i_2,j_2}(X_{i_1,j_1}+X_{i_2,j_2})]
				H_{\Delta_{\mathbf i,\mathbf l}}(u)\}du \notag\\
				&\quad+\frac1{n(n-1)}
				\sum_{\substack{\mathbf l\in[n]_2\\ l_1=j_1\text{ or }l_2=j_2}}
				\E\{[X_{i_1,j_1}X_{i_2,j_2}(X_{i_1,j_1}+X_{i_2,j_2})-(b_{i_1,j_1}a_{i_2,j_2}+b_{i_2,j_2}a_{i_1,j_1})]h(W)\}. \notag
			\end{align}
			The last term in \eqref{cclt-two-pair-cubic} is included in $R_4$.

			For the centered terms involving $\Delta_{\mathbf i,\mathbf j}^{(\mathbf i)}$ in \eqref{cclt-K1-integral}, we expand the terms without indicators in \eqref{cclt-Delta-two-explicit} by the conditioning calculations used in \eqref{cclt-Delta-center-expand} and \eqref{cclt-last-cross-expand}. For the indicator terms, separate the cases where only the first match holds, only the second holds, or both hold. We present the details for two of these terms and treat the others analogously. First,
			\begin{align}\label{cclt-indicator-I1}
				&\E\{X_{\pi^{-1}(j_1),\pi(i_2)}\I(\pi(i_1)=j_1,\pi(i_2)\neq j_2)h(W)\}\\
				={}&\frac1{n(n-1)}
				\sum_{l\in[n]\setminus\{j_1,j_2\}}a_{i_1,l}
				\E\{h(W)\mid \pi(i_1)=j_1,\pi(i_2)=l\} \notag\\
				={}&-\frac1{n(n-1)}
				\sum_{l\in[n]\setminus\{j_1,j_2\}}a_{i_1,l}
				\E\int_{|u|\leq2\delta/3}h'(W+u)
				H_{\Delta_{\mathbf i,(j_1,l)}}(u)du -\frac{a_{i_1,j_1}+a_{i_1,j_2}}{n(n-1)}\E h(W). \notag
			\end{align}
			The same term with only the second match or both matches is handled in the same way. Moreover,
			\begin{align}\label{cclt-indicator-I2}
				&\E\{X_{\pi^{-1}(j_1),\pi(i_1)}\I(\pi(i_1)\neq j_1,\pi(i_2)=j_2)h(W)\}\\
				={}&\frac1{n(n-1)(n-2)}
				\sum_{k\in[n]\setminus\{i_1,i_2\}}
				\sum_{l\in[n]\setminus\{j_1,j_2\}}a_{k,l} \notag\\
				&\qquad\times\E\{h(W)\mid
				\pi(i_1)=l,\pi(k)=j_1,\pi(i_2)=j_2\} \notag\\
				={}&-\frac1{n(n-1)(n-2)}
				\sum_{k\in[n]\setminus\{i_1,i_2\}}
				\sum_{l\in[n]\setminus\{j_1,j_2\}}a_{k,l} \notag\\
				&\qquad\times\{\E h(W)
				-\E[h(W)\mid\pi(i_1)=l,\pi(k)=j_1,\pi(i_2)=j_2]\} \notag\\
				&\quad+\frac{a_{i_1,j_1}+a_{i_1,j_2}+a_{i_2,j_1}+a_{i_2,j_2}}
				{n(n-1)(n-2)}\E h(W). \notag
			\end{align}
			For the last difference in \eqref{cclt-indicator-I2}, let $\mathbf r=(i_1,k,i_2)$ and $\mathbf s=(l,j_1,j_2)$.  By the definition of $W_{\mathbf r,\mathbf s}$, $\mathcal L(W_{\mathbf r,\mathbf s}) =\mathcal L(W\mid\pi(i_1)=l,\pi(k)=j_1,\pi(i_2)=j_2)$, and hence
			\begin{align}\label{cclt-three-pair-difference}
				\E h(W)-\E\{h(W)\mid \pi(i_1)=l,\pi(k)=j_1,\pi(i_2)=j_2\}
				=\E\int_{|u|\leq\delta}h'(W+u)
				H_{\Delta_{\mathbf r,\mathbf s}}(u)du.
			\end{align}
			For every fixed $\mathbf i\in[n]_2$ and $\pi$, $\sum_{\mathbf j\in[n]_2}\I(\pi(i_1)=j_1\text{ or }\pi(i_2)=j_2)=2n-3$. Therefore, by \eqref{cclt-bounded},
			\begin{align}\label{cclt-indicator-remainder-bound}
				&\biggl|\E\biggl\{\frac{1}{2n^2(n-1)}
				\sum_{\mathbf i\in[n]_2}\sum_{\mathbf j\in[n]_2}
				\E[B_{\mathbf i,\mathbf j}]
				\I(\pi(i_1)=j_1\text{ or }\pi(i_2)=j_2)\notag\\
				&\qquad\times\bigl\{
				X_{\pi^{-1}(j_1),\pi(i_2)}
				+X_{\pi^{-1}(j_2),\pi(i_1)}
				-X_{\pi^{-1}(j_1),\pi(i_1)}
				-X_{\pi^{-1}(j_2),\pi(i_2)}
				\bigr\}\biggr\}\biggr|
				\leq C\delta^3.
			\end{align}
			The expectation bounded in \eqref{cclt-indicator-remainder-bound} is included in $R_4$. We collect in $\hat K_4(u)$ the integral terms in \eqref{cclt-X3-sum}--\eqref{cclt-three-pair-difference}, retaining their original normalizing constants, and place the remaining terms in $R_4$.  With this notation,
			\begin{equation}\label{cclt-L1-K4}
				\E\{\hat L_1h(W)\}
				=\E\int_{|u|\leq\delta}h'(W+u)\hat K_4(u)du
				+\E\{R_4h(W)\}.
			\end{equation}
			We next expand $\hat L_4$.  Remark \ref{remark-exchangeable-third-expansion} shows that $\gamma_{3,3}=0$.  Hence, by \eqref{third-L4-definition} and \eqref{cclt-K},
			\[
			\hat L_4=\int_{|u|\leq\delta/3}u\hat K(u)du
			=-\frac1{8n}\sum_{\mathbf i\in[n]_2}D_{\mathbf i}^{3}.
			\]
			Applying the exchangeability calculation described in the same remark to each fixed $\mathbf i$, we obtain
			\begin{equation}\label{cclt-L4-K5}
				\E\{\hat L_4h(W)\}
				=\E\int_{|u|\leq\delta/3}h'(W+u)\hat K_5(u)du,
			\end{equation}
			where
			\begin{equation}\label{cclt-K5-explicit}
				\hat K_5(u)=-\frac1{16n}\sum_{\mathbf i\in[n]_2}
				D_{\mathbf i}^{3}H_{D_{\mathbf i}}(u),\qquad R_5=0.
			\end{equation}
			
			To apply \eqref{third-canonical-direct-identification}, set $R_2=R$, $R_3=0$, $\hat K_2(u)=\hat K_3(u)=0$, $\hat L_{21}=\hat L_3=0$, and $\gamma_{3,2}=0$. Together with $\gamma_{3,3}=0$ and $R_5=0$, equations \eqref{cclt-basic-stein}, \eqref{cclt-L-K1}, \eqref{cclt-L1-K4}, and \eqref{cclt-L4-K5} then yield \eqref{third-order-Stein-identity} with $\mathcal R_{3,j}$ and $\mathcal K_{3,j}(u)$ as stated in Lemma \ref{lemma-cclt-third-stein}.

			\par\smallskip\noindent\textit{Step 2.}\quad We first compare the kernel mass with the third cumulant. Expanding $W^3$ and using the row and column centering in \eqref{cclt-row-column}, we obtain
			\begin{align}\label{cclt-gamma-proof}
				\Gamma_3=\E W^3
				&=\frac{n}{(n-1)(n-2)}
				\sum_{i=1}^{n}\sum_{j=1}^{n}a_{i,j}^{3}
				+\frac{3}{n-1}
				\sum_{i=1}^{n}\sum_{j=1}^{n}a_{i,j}c_{i,j}^{2}
				+\frac1n
				\sum_{i=1}^{n}\sum_{j=1}^{n}\E\bar X_{i,j}^{3}\\
				&=\frac1n\sum_{i=1}^{n}\sum_{j=1}^{n}\E X_{i,j}^{3}
				+\frac{3}{n(n-1)}
				\sum_{i=1}^{n}\sum_{j=1}^{n}a_{i,j}b_{i,j}
				+\frac{4}{n(n-1)(n-2)}
				\sum_{i=1}^{n}\sum_{j=1}^{n}a_{i,j}^{3}. \notag
			\end{align}
			The same centering condition gives
			\begin{align}\label{cclt-Delta-expectations}
				\E\Delta_{i,j}^{(i)}
				=-\frac{a_{i,j}}{n-1},\quad
				\E\Delta_{\mathbf i,\mathbf j}^{(\mathbf i)}
				=-\frac{a_{i_1,j_1}+a_{i_1,j_2}+a_{i_2,j_1}+a_{i_2,j_2}}{n-2}.
			\end{align}
			Then \eqref{cclt-K1-integral}, \eqref{cclt-Delta-expectations}, and \eqref{cclt-gamma-proof} yield
			\begin{align}\notag
				\E\int_{|u|\leq2\delta/3}\hat K_1(u)du
				=&\frac1{2n}\sum_{i=1}^{n}\sum_{j=1}^{n}\E X_{i,j}^3
				+\frac{5n-2}{2n^2(n-1)}
				\sum_{i=1}^{n}\sum_{j=1}^{n}a_{i,j}b_{i,j}
				+\frac{3n-2}{n^2(n-1)(n-2)}
				\sum_{i=1}^{n}\sum_{j=1}^{n}a_{i,j}^{3}\notag\\
				=&\frac{\Gamma_3}{2}
				+\frac1{n^2}\sum_{i=1}^{n}\sum_{j=1}^{n}a_{i,j}b_{i,j}
				+\frac1{n^2(n-1)}
				\sum_{i=1}^{n}\sum_{j=1}^{n}a_{i,j}^{3}.
			\end{align}
			Thus the definition of $\gamma_{3,1}$ and \eqref{cclt-bounded} imply
			\begin{align*}
				|\Gamma_3-\gamma_{3,1}|
				&\leq C\biggl\{\frac1{n^2}
				\sum_{i=1}^{n}\sum_{j=1}^{n}|a_{i,j}|b_{i,j}
				+\frac1{n^2(n-1)}
				\sum_{i=1}^{n}\sum_{j=1}^{n}|a_{i,j}|^3\biggr\}
				\leq C\delta^3.
			\end{align*}
			Each of the one-, two-, and three-pair differences in \eqref{cclt-X3-sum}--\eqref{cclt-three-pair-difference} is bounded in absolute value by $\delta$. Hence $\hat K_4(u)$ is supported on $[-\delta,\delta]$. Since $|t|\delta\leq\sqrt\delta\leq1$, counting the corresponding one-, two-, and three-pair terms in \eqref{cclt-X3-sum}--\eqref{cclt-three-pair-difference}, we obtain
			\begin{equation}\label{cclt-K4-proof-bound}
				\E\biggl\{\int_{|u|\leq\delta}e^{2|tu|}|\hat K_4(u)|du\,\bigg|\,W\biggr\}
				\leq C(\delta^2+n\delta^4),\qquad
				|t|\leq \delta^{-1/2}.
			\end{equation}
			We next verify \eqref{cclt-kernel-bound}.  By \eqref{cclt-bounded}, $|D_{\mathbf i}|\leq\delta/3$.  Thus, for $|t|\leq\delta^{-1/2}$,
			\[
			\int_{|u|\leq\delta} e^{2|tu|}|u^2\hat K(u)|du
			\leq \frac{C}{n}\sum_{\mathbf i\in[n]_2}|D_{\mathbf i}|^4
			\leq Cn\delta^4,
			\]
			and, by \eqref{cclt-K5-explicit},
			\[
			\int_{|u|\leq\delta} e^{2|tu|}|\hat K_5(u)|du
			\leq \frac{C}{n}\sum_{\mathbf i\in[n]_2}|D_{\mathbf i}|^4
			\leq Cn\delta^4.
			\]
			The explicit formula \eqref{cclt-K1-explicit} similarly yields $\int_{|u|\leq\delta}e^{2|tu|}|u\hat K_1(u)|du\leq Cn\delta^4$. Since $0\leq1-U\leq1$, the bounds just obtained for $u^2\hat K(u)$, $\hat K_5(u)$, and $u\hat K_1(u)$, together with \eqref{cclt-K4-proof-bound}, prove \eqref{cclt-kernel-bound}.

			It remains to bound the scalar remainders in \eqref{cclt-R-bound}--\eqref{cclt-R4-bound} for $|t|\leq\delta^{-1/2}$. Conditioning on the permutation separates the entries that occur in $W$ from those that do not. For this purpose, define $\mathcal I_{i,j}\coloneqq\{\pi(i)=j\}$. By the Cauchy--Schwarz inequality, $\E_t|R|\leq(\E_tR^2)^{1/2}$. Since
			\[
			R=n^{-1}\sum_{i=1}^{n}\sum_{j=1}^{n}\bar X_{i,j},
			\]
			\begin{equation}\label{cclt-R-square-expand}
			\begin{aligned}
				\E_tR^2
				={}&\frac1{n^2}\sum_{i=1}^{n}\sum_{j=1}^{n}
				\sum_{i'=1}^{n}\sum_{j'=1}^{n}\E_t\biggl\{\bar X_{i,j}\bar X_{i',j'}
				\bigl[
				\I(\mathcal I_{i,j})\I(\mathcal I_{i',j'})
				+\I(\mathcal I_{i,j})\I(\mathcal I_{i',j'}^c)\\
				&\hspace{6em}+\I(\mathcal I_{i,j}^c)\I(\mathcal I_{i',j'})
				+\I(\mathcal I_{i,j}^c)\I(\mathcal I_{i',j'}^c)
				\bigr]\biggr\}.
			\end{aligned}
			\end{equation}
			For the first term in \eqref{cclt-R-square-expand}, which contains $\I(\mathcal I_{i,j})\I(\mathcal I_{i',j'})$, the bound $|\bar X_{i,j}|\leq C\delta$ implies
			\[
			\bigl|\E_t\{\bar X_{i,j}\bar X_{i',j'}
			\I(\mathcal I_{i,j})\I(\mathcal I_{i',j'})\}\bigr|
			\leq C\delta^2\E_t\{
			\I(\mathcal I_{i,j})\I(\mathcal I_{i',j'})\}.
			\]
			Summing over the indices and using $\sum_{i=1}^{n}\sum_{j=1}^{n}\I(\mathcal I_{i,j})=n$ yields the bound $C\delta^2$.  For the second term, containing $\I(\mathcal I_{i,j})\I(\mathcal I_{i',j'}^c)$, condition on $\pi$.  On $\mathcal I_{i,j}\cap\mathcal I_{i',j'}^c$, the variable $\bar X_{i',j'}$ does not occur in $W$ and is independent of the remaining factors.  Since $\E\bar X_{i',j'}=0$, $\E_t\{\bar X_{i,j}\bar X_{i',j'}\mid\pi\}=0$ on $\mathcal I_{i,j}\cap\mathcal I_{i',j'}^c$. The third term is handled symmetrically.  In the last term, neither $\bar X_{i,j}$ nor $\bar X_{i',j'}$ appears in $W$.  Conditional on $\pi$, the expectation therefore vanishes unless $(i,j)=(i',j')$, while in the diagonal case $|\E_t\{\bar X_{i,j}^2\I(\mathcal I_{i,j}^c)\}|\leq C\delta^2\E_t\I(\mathcal I_{i,j}^c)$. After summing over the indices, this term is also bounded by $C\delta^2$.  Therefore
			\begin{equation}\label{cclt-R-second-moment-bound}
				\E_tR^2\leq C\delta^2.
			\end{equation}
			Hence $\E_t|R|\leq C\delta$.
			
			We now estimate $\E_t|R_1|$ using \eqref{cclt-R1-explicit}.  The deterministic term satisfies
			\begin{equation}\label{cclt-R1-deterministic-bound}
			\biggl|\frac1{4n}\sum_{\mathbf i\in[n]_2}\E D_{\mathbf i}^{2}-1\biggr|
			=|\E\hat L|\leq n^{-1}.
			\end{equation}
			Moreover, by \eqref{cclt-bounded},
			\begin{equation}\label{cclt-R1-first-sum-bound}
			\frac1n\E_t\left|\sum_{i=1}^{n}
			\{X_{i,\pi(i)}^{2}-\E X_{i,\pi(i)}^{2}\}\right|
			\leq C\delta^2.
			\end{equation}
			It remains to estimate the second sum on the right-hand side of \eqref{cclt-R1-explicit}.  By the Cauchy--Schwarz inequality,
			\begin{align*}
				\E_t\biggl|\sum_{i=1}^{n}\sum_{j=1}^{n}
				(X_{i,j}^{2}-b_{i,j})\biggr|
				\leq{}
				\biggl[\E_t\biggl(\sum_{i=1}^{n}\sum_{j=1}^{n}
				(X_{i,j}^{2}-b_{i,j})\biggr)^2\biggr]^{1/2}.
			\end{align*}
			Repeating the diagonal--off-diagonal argument used to prove \eqref{cclt-R-second-moment-bound} yields
			\begin{equation}\label{cclt-R1-second-sum-moment-bound}
			\E_t\biggl(\sum_{i=1}^{n}\sum_{j=1}^{n}
			(X_{i,j}^{2}-b_{i,j})\biggr)^2
			\leq Cn^2\delta^4,
			\end{equation}
			and hence
			\begin{equation}\label{cclt-R1-second-sum-bound}
			\frac1{2n}\E_t\biggl|\sum_{i=1}^{n}\sum_{j=1}^{n}
			(X_{i,j}^{2}-b_{i,j})\biggr|
			\leq C\delta^2.
			\end{equation}
			Combining \eqref{cclt-R1-deterministic-bound}, \eqref{cclt-R1-first-sum-bound}, and \eqref{cclt-R1-second-sum-bound}, and using $\mathcal R_{3,2}=R_1$, proves \eqref{cclt-R1-bound}. Finally, recalling \eqref{cclt-two-pair-cubic} and \eqref{cclt-indicator-remainder-bound}, we have
			\begin{align}\label{cclt-R4-intermediate-bound}
				\E_t|R_4|
				\leq\frac{1}{n^3(n-1)^2}
				\sum_{\mathbf i\in[n]_2}\sum_{\mathbf j\in[n]_2}
				\sum_{\substack{\mathbf l\in[n]_2\\
						l_1=j_1\text{ or }l_2=j_2}}C\delta^3+C\delta^3
				\leq C\delta^3.
			\end{align}
			Combining \eqref{cclt-R4-intermediate-bound} with the bound $|\Gamma_3-\gamma_{3,1}|\leq C\delta^3$ obtained from \eqref{cclt-K1-integral} and \eqref{cclt-Delta-expectations} yields \eqref{cclt-R4-bound} for $\mathcal R_{3,3}=R_4+(\gamma_{3,1}-\Gamma_3)/2$.
		\end{proof}

\section{Proof of Lemma \ref{lemma-cw-fourth-stein}}\label{appD}

\begin{proof}[Proof of Lemma \ref{lemma-cw-fourth-stein}]
Throughout the proof, constants depend only on $\beta$ and $h$. We use the Curie--Weiss exchangeable pair from the proof of Theorem \ref{theorem32}.

\par\smallskip\noindent\textit{Step 1.}\quad Write $\hat L=(2\lambda)^{-1}\E(\Delta^2\mid\Sigma)-1$. Since $\sigma_i^2=(\sigma_i')^2=1$, the difference satisfies $\Delta^3=4\sigma^{-2}\Delta$, $\Delta^4=4\sigma^{-2}\Delta^2$, and $\Delta^5=16\sigma^{-4}\Delta$. Fix a three times differentiable function $f(w)$ whose third derivative $f^{(3)}(w)$ is absolutely continuous and for which the expectations and integrals below are finite. Applying Taylor's expansion in \eqref{cw-exchangeable-stein} and using the definition of $\hat L$ and the identities for powers of $\Delta$, we obtain the exact identity
\begin{align}
\E\{Wf(W)\}
={}&\E f'(W)+\E\{\hat Lf'(W)\}+\E\{Rf(W)\}-\frac1{\sigma^2}\E\{(W-R)f''(W)\}
\label{cw-fourth-Wf}\\
&
+\frac2{3\sigma^2}\E\{(1+\hat L)f^{(3)}(W)\}-\frac1{12\lambda}\E\{\Delta^5(1-U)^3
f^{(4)}(W-U\Delta)\}.\notag
\end{align}
We include the last term on the right-hand side of \eqref{cw-fourth-Wf} in the kernel. All further terms involving $f^{(4)}(w)$ are treated in the same way.  We now expand the other terms.

Let $S_i=S-\sigma_i=nm+\sigma W-\sigma_i$. By Taylor's expansion, we obtain
\begin{align}
 \tanh(\beta S_i/n+\beta h)
 ={}&m+\frac{\beta(1-m^2)}n(\sigma W-\sigma_i)
 -\frac{\beta^2m(1-m^2)}{n^2}(\sigma W-\sigma_i)^2\notag\\*
 &+\frac{\beta^3(1-m^2)(3m^2-1)}{3n^3}
   (\sigma W-\sigma_i)^3+\mathcal E_{i,4},
 \label{cw-tanh-expansion}
\end{align}
where $|\mathcal E_{i,4}|\leq C_{\beta,h}n^{-4}(\sigma|W|+1)^4$. Using $R=W-\lambda^{-1}\E(\Delta\mid\Sigma)$ and $\hat L=(2\lambda)^{-1}\E(\Delta^2\mid\Sigma)-1$, we obtain
\begin{align}
 R&=W-\frac1{\lambda n\sigma}\sum_{i=1}^n
       \{\sigma_i-\tanh(\beta S_i/n+\beta h)\},
       \label{cw-fourth-R-definition}\displaybreak[1]\\
 \hat L&=\frac1{n(1-m^2)}\sum_{i=1}^n
       \{m^2-\sigma_i\tanh(\beta S_i/n+\beta h)\}.
       \label{cw-L-basic}
\end{align}
Substitution of \eqref{cw-tanh-expansion} into \eqref{cw-fourth-R-definition}, together with $\sum_{i=1}^n\sigma_i=nm+\sigma W$, yields
\begin{align}
R
={}&-\frac1{\lambda n\sigma}\Biggl[
\beta^2m(1-m^2)
\left(\frac{\sigma^2}{n}-\frac{2\sigma^2}{n^2}\right)W^2
+\beta m(1-m^2)
\label{cw-R-expansion}\\*
&\quad+\{\beta(1-m^2)-2\beta^2m^2(1-m^2)\}
\frac{\sigma}{n}W
+\frac{\beta^2m(1-m^2)}n\notag\\*
&\;-\frac{\beta^3(1-m^2)(3m^2-1)}3
\left\{
\left(\frac{\sigma^3}{n^2}-\frac{3\sigma^3}{n^3}\right)W^3
-\frac{3m\sigma^2}{n^2}W^2
+\left(\frac{3\sigma}{n^2}-\frac{\sigma}{n^3}\right)W
-\frac{m}{n^2}\right\}
+\mathcal E_{R,4}\Biggr].\notag
\end{align}
Similarly, substituting \eqref{cw-tanh-expansion} into \eqref{cw-L-basic} and using $\sum_{i=1}^n\sigma_i=nm+\sigma W$, we obtain
\begin{align}
\hat L
={}&\frac1{n(1-m^2)}\Biggl[
-m\sigma\{1+\beta(1-m^2)\}W
-\frac{\beta(1-m^2)\sigma^2}{n}W^2
+\beta(1-m^2)
\label{cw-L-expansion}\\
&\quad+\beta^2m(1-m^2)\left\{
\frac{\sigma^3}{n^2}W^3
+\frac{m\sigma^2}{n}W^2
-\frac{(2n-1)\sigma}{n^2}W+\frac mn
\right\}\notag\\
&\quad-\frac{\beta^3(1-m^2)(3m^2-1)}3
\left\{
\frac{\sigma^4}{n^3}W^4
+\frac{m\sigma^3}{n^2}W^3
-\frac{3(n-1)\sigma^2}{n^3}W^2
+\frac{3m\sigma}{n^2}W-\frac1{n^2}
\right\}
+\mathcal E_{L,4}\Biggr].\notag
\end{align}
The Taylor remainders inside the brackets in \eqref{cw-R-expansion} and \eqref{cw-L-expansion} are
\[
 \mathcal E_{R,4}=-\sum_{i=1}^n\mathcal E_{i,4},\qquad
 \mathcal E_{L,4}=-\sum_{i=1}^n\sigma_i\mathcal E_{i,4}.
\]
The bound following \eqref{cw-tanh-expansion} gives $|\mathcal E_{R,4}|+|\mathcal E_{L,4}| \leq C_{\beta,h}n^{-3}(\sigma|W|+1)^4$. Separating the leading terms of $\hat L$ and $R$, we have
\begin{align}
R
={}&-\frac{\beta^2m(1-m^2)\sigma}{\lambda n^2}W^2
+\frac{\beta^3(1-m^2)(3m^2-1)\sigma^2}
{3\lambda n^3}W^3+\mathcal R_{4,1,1},
\label{cw-fourth-R-local}\displaybreak[1]\\
\hat L
={}&-\frac{m\sigma\{1+\beta(1-m^2)\}}
{n(1-m^2)}W
-\frac{\beta(1-\beta m^2)\sigma^2}{n^2}W^2
+\mathcal R_{4,2,1},
\label{cw-fourth-L-local}
\end{align}
where the two local remainders satisfy
\begin{align}
|\mathcal R_{4,1,1}|
&\leq C_{\beta,h}\bigl[
n^{-1/2}+n^{-1}|W|+n^{-3/2}W^2
+n^{-2}|W|^3+n^{-3/2}W^4\bigr],
\label{cw-fourth-local-bounds}\\
|\mathcal R_{4,2,1}|
&\leq C_{\beta,h}\bigl[
n^{-1}+n^{-3/2}|W|+n^{-2}W^2
+n^{-3/2}|W|^3+n^{-2}W^4\bigr].\notag
\end{align}

{\tolerance=9999\emergencystretch=3em We next present four identities for $\E\{Wf''(W)\}$, $\E\{W^2f'(W)\}$, $\E\{W^2f(W)\}$, and $\E\{W^3f(W)\}$. Repeated applications of Taylor's expansion to the exchangeable-pair identity yield the first two:\par}
\begin{align}
\begin{split}
\E\{Wf''(W)\}
={}&\E f^{(3)}(W)+\E\{\hat Lf^{(3)}(W)\}
+\E\{Rf''(W)\}\\
&-\frac1{2\lambda}\E\{\Delta^3(1-U)
f^{(4)}(W-U\Delta)\},
\end{split}\label{cw-fourth-Wf2}\\[2pt]
\begin{split}
\E\{W^2f'(W)\}
={}&\E f'(W)+\E f^{(3)}(W)
+\E\{(\hat L+RW)f'(W)\}
\\
&+\E\{(W\hat L+R)f''(W)\}
+\E\left\{\left[
\hat L-\frac1{\sigma^2}W(W-R)\right]f^{(3)}(W)\right\}\\
&-\frac1{2\lambda}\E\{\Delta^3(1-U)
f^{(4)}(W-U\Delta)\}\\
&+\frac1{4\lambda}\E\{W\Delta^4(1-U)^2
f^{(4)}(W-U\Delta)\}.
\end{split}\label{cw-fourth-W2f1}
\end{align}
Here \eqref{cw-fourth-W2f1} follows by applying Taylor's expansion directly to the exchangeable-pair identity \eqref{cw-exchangeable-stein}. The identity for $\E\{W^2f(W)\}$, which will be applied to the quadratic term of $R$ in \eqref{cw-fourth-R-local}, follows by antisymmetrization:
\begin{align}
\E\{W^2f(W)\}
={}&\E f(W)+\E f''(W)
+\E\{(\hat L+RW)f(W)\}
+\E\{(W\hat L+R)f'(W)\}
\label{cw-fourth-W2f}\\
&+\E\left\{\left[
\hat L-\frac1{\sigma^2}W(W-R)\right]f''(W)\right\}
+\frac2{3\sigma^2}\E\{W(1+\hat L)f^{(3)}(W)\}\notag\\
&-\frac1{4\lambda}\E\left\{\Delta^4
\int_0^1u(1-u)f^{(4)}(W-u\Delta)\,du\right\}\notag\\
&-\frac1{12\lambda}\E\{W\Delta^5(1-U)^3
f^{(4)}(W-U\Delta)\}.\notag
\end{align}
For $\E\{W^3f(W)\}$, which will be applied to the cubic term of $R$ in \eqref{cw-fourth-R-local}, define
\begin{align}
\mathcal G_0
\coloneqq{}&2W\hat L-\frac2{\sigma^2}(W-R)+2R+RW^2,\qquad
\mathcal G_1
\coloneqq 3\hat L+RW+W^2\hat L,
\label{cw-fourth-G}\\*
\mathcal G_2
\coloneqq{}&W\hat L+R-\frac2{\sigma^2}(W-R)
-\frac1{\sigma^2}W^2(W-R),
\notag\\*
\mathcal G_3
\coloneqq{}&\hat L+\frac4{3\sigma^2}(1+\hat L)
-\frac1{\sigma^2}W(W-R)
+\frac2{3\sigma^2}W^2(1+\hat L).\notag
\end{align}
Then
\begin{align}
\E\{W^3f(W)\}
={}&3\E f'(W)+\E f^{(3)}(W)
+\sum_{j=0}^{3}\E\{\mathcal G_jf^{(j)}(W)\}
+\mathcal T_3(f^{(4)}),
\label{cw-fourth-W3f}\displaybreak[1]\\
\mathcal T_3(f^{(4)})
\coloneqq{}&-\frac1{6\lambda}\E\{\Delta^5(1-U)^3
f^{(4)}(W-U\Delta)\}-\frac1{2\lambda}\E\{\Delta^3(1-U)
f^{(4)}(W-U\Delta)\}\label{cw-fourth-T3}\\
&+\frac1{4\lambda}\E\{W\Delta^4(1-U)^2
f^{(4)}(W-U\Delta)\}\notag\\
&-\frac1{12\lambda}\E\{W^2\Delta^5(1-U)^3
f^{(4)}(W-U\Delta)\}.\notag
\end{align}

Substituting \eqref{cw-fourth-Wf2} into \eqref{cw-fourth-Wf} cancels the two terms containing $Rf''(W)$, and we obtain
\begin{align}
\E\{Wf(W)\}
={}&\E f'(W)-\frac1{3\sigma^2}\E f^{(3)}(W)
+\E\{\hat Lf'(W)\}
-\frac1{3\sigma^2}\E\{\hat Lf^{(3)}(W)\}
\label{cw-fourth-working-identity}\\
&+\E\{Rf(W)\}+\frac1{2\lambda\sigma^2}
\E\{\Delta^3(1-U)f^{(4)}(W-U\Delta)\}\notag\\
&-\frac1{12\lambda}\E\{\Delta^5(1-U)^3
f^{(4)}(W-U\Delta)\}.\notag
\end{align}
It remains to expand $\E\{\hat Lf'(W)\}$ and $\E\{Rf(W)\}$.

We begin by expanding $\E\{\hat Lf'(W)\} -\{3\sigma^2\}^{-1}\E\{\hat Lf^{(3)}(W)\}$.  By \eqref{cw-fourth-L-local},
\begin{align}
\E\{\hat Lf'(W)\}
={}&-\frac{m\sigma\{1+\beta(1-m^2)\}}
{n(1-m^2)}\E\{Wf'(W)\}
-\frac{\beta(1-\beta m^2)\sigma^2}{n^2}
\E\{W^2f'(W)\}\notag\\
&+\E\{\mathcal R_{4,2,1}f'(W)\}.\label{cw-fourth-L-first}
\end{align}
After replacing $f(w)$ by $f'(w)$ in \eqref{cw-exchangeable-stein} and applying Taylor's expansion, we antisymmetrize $\E\{\Delta^3f^{(3)}(W)\}$ to obtain
\begin{align}
\E\{Wf'(W)\}
={}\E\{(1+\hat L)f''(W)\}+\E\{Rf'(W)\}
-\frac1{4\lambda}\E\{\Delta^4U(1-U)
f^{(4)}(W-U\Delta)\}.
\label{cw-fourth-Wf1}
\end{align}
Substitute \eqref{cw-fourth-Wf1} into \eqref{cw-fourth-L-first}, expand $R$ and $\hat L$ by \eqref{cw-fourth-R-local} and \eqref{cw-fourth-L-local}, and apply \eqref{cw-fourth-Wf2} and \eqref{cw-fourth-W2f1} to the resulting $Wf''(W)$ and $W^2f'(W)$ terms. For the remaining terms with $Wf^{(3)}(W)$, use
\begin{equation}\label{cw-fourth-Wf3}
 \E\{Wf^{(3)}(W)\}
 =\frac1{2\lambda}\E\{\Delta^2f^{(4)}(W-U\Delta)\}
  +\E\{Rf^{(3)}(W)\}.
\end{equation}
The leading deterministic terms in $\E\{\hat Lf'(W)\}-(3\sigma^2)^{-1}\E\{\hat Lf^{(3)}(W)\}$ are
\begin{equation}\label{cw-fourth-L-deterministic}
\begin{aligned}
&-\frac{m\sigma\{1+\beta(1-m^2)\}}
{n(1-m^2)}\E f''(W)\\
&\quad+\left[
\left\{\frac{m\sigma\{1+\beta(1-m^2)\}}
{n(1-m^2)}\right\}^{\!2}
-\frac{\beta\sigma^2\{1-\beta(1+m^2)\}}
{n^2\{1-\beta(1-m^2)\}}
\right]\E f^{(3)}(W).
\end{aligned}
\end{equation}
The deterministic correction multiplying $\E f'(W)$ is included in $\mathcal R_{4,2}^{(L)}$, while the two coefficients in \eqref{cw-fourth-L-deterministic} enter $\gamma_{3,n}/2$ and $\gamma_{4,n}/6$, respectively.  The full remainder factor multiplying $f'(W)$ inside the expectation is
\begin{align}
\mathcal R_{4,2}^{(L)}
\coloneqq{}&-\frac{\beta\sigma^2\{1-\beta(1+m^2)\}}
{n^2\{1-\beta(1-m^2)\}}
+\mathcal R_{4,2,1}
-\frac{m\sigma\{1+\beta(1-m^2)\}}
{n(1-m^2)}
\label{cw-fourth-R42-L}\\
&\times
\left[
\frac{\beta^3(1-m^2)(3m^2-1)\sigma^2}
{3\lambda n^3}W^3+\mathcal R_{4,1,1}\right]-\frac{\beta\sigma^2\{1-\beta(1+m^2)\}}
{n^2\{1-\beta(1-m^2)\}}(\hat L+RW).\notag
\end{align}
The remainder factor multiplying $f''(W)$ inside the expectation is
\begin{align}
\mathcal R_{4,3}^{(L)}
\coloneqq{}&
\frac{m\sigma\{1+\beta(1-m^2)\}}
{n(1-m^2)}
\frac{\beta(1-\beta m^2)\sigma^2}{n^2}W^2
-\frac{m\sigma\{1+\beta(1-m^2)\}}
{n(1-m^2)}\mathcal R_{4,2,1}
\label{cw-fourth-R43-L}\\
&
+\left[
\frac{m\sigma\{1+\beta(1-m^2)\}}
{n(1-m^2)}\right]^2R-\frac{\beta\sigma^2\{1-\beta(1+m^2)\}}
{n^2\{1-\beta(1-m^2)\}}(W\hat L+R).\notag
\end{align}
After including $-\{3\sigma^2\}^{-1}\E\{\hat Lf^{(3)}(W)\}$ from \eqref{cw-fourth-working-identity}, the remainder factor multiplying $f^{(3)}(W)$ inside the expectation is
\begin{align}
\mathcal R_{4,4}^{(L)}
\coloneqq{}&\left[
\left\{\frac{m\sigma\{1+\beta(1-m^2)\}}
{n(1-m^2)}\right\}^{\!2}
-\frac{\beta\sigma^2\{1-\beta(1+m^2)\}}
{n^2\{1-\beta(1-m^2)\}}
-\frac1{3\sigma^2}\right]
\label{cw-fourth-R44-L}\\*
&\quad\times\left[
\mathcal R_{4,2,1}
-\frac{m\sigma\{1+\beta(1-m^2)\}}
{n(1-m^2)}R
-\frac{\beta(1-\beta m^2)\sigma^2}{n^2}W^2
\right]\notag\\*
&+\frac{\beta\sigma^2\{1-\beta(1+m^2)\}}
{n^2\{1-\beta(1-m^2)\}}\,
\frac1{\sigma^2}W(W-R).\notag
\end{align}

For $\E\{Rf(W)\}$, apply \eqref{cw-fourth-W2f} and \eqref{cw-fourth-W3f} to the quadratic and cubic terms of \eqref{cw-fourth-R-local}. Expand $R$ and $\hat L$ in the resulting expressions once more. The terms involving $Wf(W)$, $Wf''(W)$, $W^2f'(W)$ and $W^3f(W)$ are then reduced by \eqref{cw-fourth-Wf}, \eqref{cw-fourth-Wf2}, \eqref{cw-fourth-W2f1} and \eqref{cw-fourth-W3f}, respectively. Grouping by derivatives of $f(w)$ gives the following scalar remainder factors, with every fourth-derivative term reserved for the kernel:
\begin{align}
\mathcal R_{4,1}^{(R)}
\coloneqq{}&-\frac{\beta^2m(1-m^2)\sigma}{\lambda n^2}
+\mathcal R_{4,1,1}
+\frac{\beta^2m(1-m^2)\sigma}{\lambda n^2}
\left[
\frac{\beta(1-\beta m^2)\sigma^2}{n^2}W^2
-\mathcal R_{4,2,1}\right.
\label{cw-fourth-R41-R}\\*
&\quad\left.
-\frac{\beta^3(1-m^2)(3m^2-1)\sigma^2}
{3\lambda n^3}W^4
-W\mathcal R_{4,1,1}\right]
+\frac{\beta^2m^2\{1+\beta(1-m^2)\}\sigma^2}
{\lambda n^3}R\notag\\*
&+\left[
\frac{\beta^3(1-m^2)(3m^2-1)\sigma^2}{3\lambda n^3}
+\frac{\beta^4m^2(1-m^2)^2\sigma^2}{\lambda^2n^4}
\right]\mathcal G_0 .\notag
\end{align}
The full remainder factor multiplying $f'(W)$ inside the expectation is
\begin{align}
\mathcal R_{4,2}^{(R)}
\coloneqq{}&
\frac{\beta^3(1-m^2)(3m^2-1)\sigma^2}{\lambda n^3}
+\frac{2\beta^2m^2\{1+\beta(1-m^2)\}\sigma^2}
{\lambda n^3}
+\frac{4\beta^4m^2(1-m^2)^2\sigma^2}{\lambda^2n^4}
\label{cw-fourth-R42-R}\\*
&+\frac{\beta^2m(1-m^2)\sigma}{\lambda n^2}
\left[
\left\{\frac{\beta(1-\beta m^2)\sigma^2}{n^2}
-\frac{\beta^3(1-m^2)(3m^2-1)\sigma^2}
{3\lambda n^3}\right\}W^3\right.\notag\\*
&\quad\left.
-W\mathcal R_{4,2,1}-\mathcal R_{4,1,1}\right]
+\frac{\beta^2m^2\{1+\beta(1-m^2)\}\sigma^2}
{\lambda n^3}\hat L\notag\\*
&+\left[
\frac{\beta^2m^2\{1+\beta(1-m^2)\}\sigma^2}
{\lambda n^3}
+\frac{\beta^4m^2(1-m^2)^2\sigma^2}
{\lambda^2n^4}\right](\hat L+RW)\notag\\*
&+\left[
\frac{\beta^3(1-m^2)(3m^2-1)\sigma^2}
{3\lambda n^3}
+\frac{\beta^4m^2(1-m^2)^2\sigma^2}
{\lambda^2n^4}\right]\mathcal G_1 .\notag
\end{align}
The full remainder factor multiplying $f''(W)$ inside the expectation is
\begin{align}
\mathcal R_{4,3}^{(R)}
\coloneqq{}&\frac{\beta^2m(1-m^2)\sigma}{\lambda n^2}
\left[
\frac{\beta(1-\beta m^2)\sigma^2}{n^2}W^2
-\mathcal R_{4,2,1}
+\frac1{\sigma^2}W(W-R)\right]
\label{cw-fourth-R43-R}\\
&+\frac{\beta^2m^2\{1+\beta(1-m^2)\}\sigma^2}
{\lambda n^3}
\left[R-\frac1{\sigma^2}(W-R)\right]\notag\\
&+\left[
\frac{\beta^2m^2\{1+\beta(1-m^2)\}\sigma^2}
{\lambda n^3}
+\frac{\beta^4m^2(1-m^2)^2\sigma^2}
{\lambda^2n^4}\right](W\hat L+R)\notag\\
&+\left[
\frac{\beta^3(1-m^2)(3m^2-1)\sigma^2}
{3\lambda n^3}
+\frac{\beta^4m^2(1-m^2)^2\sigma^2}
{\lambda^2n^4}\right]\mathcal G_2 .\notag
\end{align}
For the terms involving $f^{(3)}(W)$, the full remainder factor inside the expectation is
\begin{align}
\mathcal R_{4,4}^{(R,0)}
\coloneqq{}&-\frac{2\beta^2m(1-m^2)}
{3\lambda n^2\sigma}W(1+\hat L)
\label{cw-fourth-R44-R0}\\*
&+\frac{2\beta^2m^2\{1+\beta(1-m^2)\}}
{3\lambda n^3}(1+\hat L)
+\frac{\beta^2m^2\{1+\beta(1-m^2)\}\sigma^2}
{\lambda n^3}\hat L\notag\\*
&+\left[
\frac{\beta^2m^2\{1+\beta(1-m^2)\}\sigma^2}
{\lambda n^3}
+\frac{\beta^4m^2(1-m^2)^2\sigma^2}
{\lambda^2n^4}\right]
\left[\hat L-\frac1{\sigma^2}W(W-R)\right]\notag\\*
&+\left[
\frac{\beta^3(1-m^2)(3m^2-1)\sigma^2}
{3\lambda n^3}
+\frac{\beta^4m^2(1-m^2)^2\sigma^2}
{\lambda^2n^4}\right]\mathcal G_3 .\notag
\end{align}
Define the deterministic coefficient $\chi_{4,n}$ by
\begin{align}\label{cw-fourth-chi}
\chi_{4,n}
\coloneqq{}&
\frac{2\beta^2m(1-m^2)}{3\lambda n^2\sigma}
+\frac{m\sigma\{1+\beta(1-m^2)\}}{n(1-m^2)}
\Biggl[
\left(2+\frac{2}{3\sigma^2}\right)
\frac{m\sigma\{1+\beta(1-m^2)\}}{n(1-m^2)}
\frac{\beta^2m(1-m^2)\sigma}{\lambda n^2} \notag\\*
&\quad
+\left(2+\frac{4}{3\sigma^2}\right)
\left\{\frac{\beta^2m(1-m^2)\sigma}
{\lambda n^2}\right\}^{\!2}
+\left(1+\frac{4}{3\sigma^2}\right)
\frac{\beta^3(1-m^2)(3m^2-1)\sigma^2}
{3\lambda n^3}
\Biggr].
\end{align}
By \eqref{cw-fourth-L-local}, the deterministic linear term in $\mathcal R_{4,4}^{(R,0)}$ is $-\chi_{4,n}W$.  We therefore set
\begin{equation}
\mathcal R_{4,4}^{(R)}
\coloneqq\mathcal R_{4,4}^{(R,0)}
+\chi_{4,n}(W-R).
\label{cw-fourth-R44-R}
\end{equation}
Since $\mathcal R_{4,4}^{(R,0)} =\mathcal R_{4,4}^{(R)}-\chi_{4,n}(W-R)$, applying \eqref{cw-fourth-Wf3} to the term involving $Wf^{(3)}(W)$ transforms $-\chi_{4,n}\E\{(W-R)f^{(3)}(W)\}$ into $-\chi_{4,n}(2\lambda)^{-1} \E\{\Delta^2f^{(4)}(W-U\Delta)\}$. The same substitutions give the leading deterministic terms
\begin{align}
&-\frac{\beta^2m(1-m^2)\sigma}{\lambda n^2}\E f''(W)
\label{cw-fourth-R-deterministic}\\*
&+\Biggl[
\frac{\beta^3(1-m^2)(3m^2-1)\sigma^2}
{3\lambda n^3}
+\frac{2\beta^2m^2\{1+\beta(1-m^2)\}\sigma^2}
{\lambda n^3}
+\frac{2\beta^4m^2(1-m^2)^2\sigma^2}
{\lambda^2n^4}\Biggr]\E f^{(3)}(W).\notag
\end{align}
The deterministic corrections multiplying $\E f(W)$ and $\E f'(W)$ are included in $\mathcal R_{4,1}^{(R)}$ and $\mathcal R_{4,2}^{(R)}$, respectively, while the two coefficients in \eqref{cw-fourth-R-deterministic} enter $\gamma_{3,n}/2$ and $\gamma_{4,n}/6$.  Thus, \eqref{cw-fourth-R41-R}--\eqref{cw-fourth-R44-R} specify all scalar remainders in the expansion of $\E\{Rf(W)\}$.

The scalar remainders from the expansions of $\E\{\hat Lf'(W)\}$ and $\E\{Rf(W)\}$ are combined by setting
\begin{equation}\label{cw-fourth-global-remainders}
 \mathcal R_{4,1}=\mathcal R_{4,1}^{(R)},\qquad
 \mathcal R_{4,j}=\mathcal R_{4,j}^{(L)}+\mathcal R_{4,j}^{(R)}
 \quad(j=2,3,4).
\end{equation}
The deterministic multiples of $\E f''(W)$ and $\E f^{(3)}(W)$ obtained from \eqref{cw-fourth-working-identity}, \eqref{cw-fourth-L-deterministic}, and \eqref{cw-fourth-R-deterministic} combine into $\gamma_{3,n}/2$ and $\gamma_{4,n}/6$, respectively:
\begin{align}
\frac{\gamma_{3,n}}2
={}&-\frac{m\sigma\{1+\beta(1-m^2)\}}
{n(1-m^2)}
-\frac{\beta^2m(1-m^2)\sigma}{\lambda n^2},
\label{cw-gamma3-raw}\displaybreak[1]\\
\frac{\gamma_{4,n}}6
={}&-\frac1{3\sigma^2}
+\left\{\frac{m\sigma\{1+\beta(1-m^2)\}}
{n(1-m^2)}\right\}^{\!2}
-\frac{\beta\sigma^2\{1-\beta(1+m^2)\}}
{n^2\{1-\beta(1-m^2)\}}
\label{cw-gamma4-raw}\\
&+\frac{\beta^3(1-m^2)(3m^2-1)\sigma^2}
{3\lambda n^3}
+\frac{2\beta^2m^2\{1+\beta(1-m^2)\}\sigma^2}
{\lambda n^3}+\frac{2\beta^4m^2(1-m^2)^2\sigma^2}
{\lambda^2n^4}.\notag
\end{align}
Substituting the definitions of $\sigma$ and $\lambda$ into \eqref{cw-gamma3-raw}--\eqref{cw-gamma4-raw}, we obtain $\gamma_{3,n}$ and $\gamma_{4,n}$ as the leading terms displayed in \eqref{cw-explicit-cumulants}. Together with \eqref{cw-fourth-global-remainders}, these coefficients determine the scalar part of \eqref{cw-fourth-stein}. To write the integral remainders in \eqref{cw-fourth-Wf}--\eqref{cw-fourth-T3} in kernel form, define, on the support of $H_\Delta(u)$, $s_\Delta(u)=-u/\Delta$. When $\Delta=0$, set every expression containing $s_\Delta(u)H_\Delta(u)$ equal to zero.  For every integrable weight $\varphi(s)$,
\begin{align}
\Delta^k\int_0^1\varphi(s)f^{(4)}(W-s\Delta)\,ds
=\int_{|u|\leq2/\sigma}f^{(4)}(W+u)
\Delta^{k-1}\varphi\{s_\Delta(u)\}H_\Delta(u)\,du.
\label{cw-fourth-kernel-change}
\end{align}
Applying \eqref{cw-fourth-kernel-change} to each integral remainder and collecting the coefficients of $f^{(4)}(W+u)$, we obtain the following six random kernel components:

\begin{align}
\mathcal K_{4,1,0}(u)
\coloneqq{}&\frac1{24\lambda}\E\left[
\Delta^4\{3(1-s_\Delta(u))-2(1-s_\Delta(u))^3\}
H_\Delta(u)\,\mid\Sigma\right],
\label{cw-fourth-kernel-0}\displaybreak[1]\\
\mathcal K_{4,1,1}(u)
\coloneqq{}&\frac1{4\lambda}\left[
\frac{m\sigma\{1+\beta(1-m^2)\}}{n(1-m^2)}
+\frac{\beta^2m(1-m^2)\sigma}{\lambda n^2}
\right]\label{cw-fourth-kernel-1}\\*
&\quad\times\E\left[
\Delta^3s_\Delta(u)\{1-s_\Delta(u)\}H_\Delta(u)
\,\mid\Sigma\right],
\notag\displaybreak[1]\\
\mathcal K_{4,1,2}(u)
\coloneqq{}&-\frac1{2\lambda}\Biggl[
\left\{\frac{m\sigma\{1+\beta(1-m^2)\}}
{n(1-m^2)}\right\}^{\!2}
-\frac{\beta\sigma^2\{1-\beta(1+m^2)\}}
{n^2\{1-\beta(1-m^2)\}}
\label{cw-fourth-kernel-2}\displaybreak[1]\\
&\quad
+\frac{\beta^3(1-m^2)(3m^2-1)\sigma^2}
{3\lambda n^3}
+\frac{2\beta^2m^2\{1+\beta(1-m^2)\}\sigma^2}
{\lambda n^3}
+\frac{2\beta^4m^2(1-m^2)^2\sigma^2}
{\lambda^2n^4}\Biggr]\notag\\
&\quad\times\E\left[
\Delta^2\{1-s_\Delta(u)\}H_\Delta(u)
\,\mid\Sigma\right],\notag\displaybreak[1]\\
\mathcal K_{4,1,3}(u)
\coloneqq{}&\frac{W}{4\lambda}\Biggl[
\frac{\beta^3(1-m^2)(3m^2-1)\sigma^2}
{3\lambda n^3}
+\frac{\beta^2m^2\{1+\beta(1-m^2)\}\sigma^2}
{\lambda n^3}
\label{cw-fourth-kernel-3}\\
&\quad
+\frac{2\beta^4m^2(1-m^2)^2\sigma^2}
{\lambda^2n^4}
-\frac{\beta\sigma^2\{1-\beta(1+m^2)\}}
{n^2\{1-\beta(1-m^2)\}}\Biggr]\E\left[
\Delta^3\{1-s_\Delta(u)\}^2H_\Delta(u)
\,\mid\Sigma\right],\notag\displaybreak[1]\\
\mathcal K_{4,1,4}(u)
\coloneqq{}&-\frac1{2\lambda}\Biggl[
\frac{m\sigma\{1+\beta(1-m^2)\}}{n(1-m^2)}
\left\{
\left(\frac{m\sigma\{1+\beta(1-m^2)\}}
{n(1-m^2)}\right)^{\!2}\right.
\label{cw-fourth-kernel-4}\\
&\left.\quad
-\frac{\beta\sigma^2\{1-\beta(1+m^2)\}}
{n^2\{1-\beta(1-m^2)\}}
-\frac1{3\sigma^2}\right\}
+\chi_{4,n}\Biggr]\E\{\Delta H_\Delta(u)\mid\Sigma\},
\notag\displaybreak[1]\\
\mathcal K_{4,1,5}(u)
\coloneqq{}&\frac1{12\lambda}
\Biggl[
-\frac{\beta^2m^2\{1+\beta(1-m^2)\}\sigma^2}
{\lambda n^3}
-\frac{2\beta^3(1-m^2)(3m^2-1)\sigma^2}
{3\lambda n^3}
\label{cw-fourth-kernel-5}\\*
&\quad-\frac{2\beta^4m^2(1-m^2)^2\sigma^2}
{\lambda^2n^4}
+\frac{\beta^2m(1-m^2)\sigma}{\lambda n^2}W
\notag\\*
&\quad-\left\{
\frac{\beta^3(1-m^2)(3m^2-1)\sigma^2}
{3\lambda n^3}
+\frac{\beta^4m^2(1-m^2)^2\sigma^2}
{\lambda^2n^4}\right\}W^2
\Biggr]\notag\\*
&\quad\times\E\left[
\Delta^4\{1-s_\Delta(u)\}^3H_\Delta(u)
\,\mid\Sigma\right].\notag
\end{align}

Let
\begin{equation}
\mathcal K_{4,1}(u)
=\sum_{\ell=0}^{5}\mathcal K_{4,1,\ell}(u),
\qquad
\mathcal K_{4,2}(u)=0.
\label{cw-fourth-final-kernel}
\end{equation}
By \eqref{cw-fourth-kernel-change} and the definitions \eqref{cw-fourth-kernel-0}--\eqref{cw-fourth-final-kernel}, all terms involving $f^{(4)}(w)$ in the expansion of \eqref{cw-fourth-working-identity} sum to
\[
\E\int_{|u|\leq2\sigma^{-1}}f^{(4)}(W+u)
\{\mathcal K_{4,1}(u)+\mathcal K_{4,2}(u)\}\,du.
\]

\par\smallskip\noindent\textit{Step 2.}\quad We identify the cumulants and verify the remainder and kernel bounds. The following lemma identifies $\gamma_{3,n}$ and $\gamma_{4,n}$ with the leading terms of the centered cumulants of $W$.

\begin{lemma}\label{lemma-cw-moment-expansion}
With the notation of Section \ref{subsec-cw-model}, the first four moments of $W$ satisfy
\begin{align}
\E W
={}&-\frac{\beta m(1-m^2)}
{\sigma\{1-\beta(1-m^2)\}^2}
+O_{\beta,h}(n^{-3/2}),
\label{cw-raw-moment-expansions}\\*
\E W^2
={}&1+\frac{\beta(1-m^2)}
{\sigma^2\{1-\beta(1-m^2)\}^4}
\bigl[3m^2-1+\beta(1-m^2)(1+2m^2)\bigr]
+O_{\beta,h}(n^{-2}),\notag\\*
\E W^3
={}&-\frac{m\{2+3\beta(1-m^2)\}}
{\sigma\{1-\beta(1-m^2)\}^2}
+O_{\beta,h}(n^{-3/2}),\notag\\*
\E W^4
={}&3+\frac{1}{\sigma^2\{1-\beta(1-m^2)\}^4}
\Bigl[2(3m^2-1)+4\beta(1-m^2)(8m^2-1)\notag\\*
&\hphantom{3+\frac{1}{\sigma^2\{1-\beta(1-m^2)\}^4}\Bigl[}
+6\beta^2(1-m^2)^2(1+2m^2)\Bigr]
+O_{\beta,h}(n^{-2}).\notag
\end{align}
Consequently, $\E W=O_{\beta,h}(n^{-1/2})$ and $\Var(W)=1+O_{\beta,h}(n^{-1})$, while $\Gamma_3=\gamma_{3,n}+O_{\beta,h}(n^{-3/2})$ and $\Gamma_4=\gamma_{4,n}+O_{\beta,h}(n^{-2})$.
\end{lemma}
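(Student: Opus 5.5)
Using the exchangeable-pair identity \eqref{cw-exchangeable-stein} with polynomial test functions, I will turn the moment computation into a triangular linear system. Take $f(w)=1,w,w^2,w^3$ in \eqref{cw-exchangeable-stein}. Since $\Delta$ and $W-W'$ are explicit and $\Delta^3=4\sigma^{-2}\Delta$, $\Delta^4=4\sigma^{-2}\Delta^2$, the term $(2\lambda)^{-1}\E\{\Delta[f(W)-f(W')]\}$ is an exact polynomial combination of $\E\{(1+\hat L)W^k\}$ and $\sigma^{-2}\E\{(W-R)W^k\}$. For instance:
\begin{itemize}
\item $f(w)=1$ gives $\E W=\E R$;
\item $f(w)=w$ gives $\E W^2=\E(1+\hat L)+\E\{RW\}$;
\item $f(w)=w^2$ and $f(w)=w^3$ give $\E W^3$ and $\E W^4$ in terms of $\E\{(1+\hat L)W\}$, $\E\{(1+\hat L)W^2\}$ and the extra terms $\sigma^{-2}\E\{(1+\hat L)\}$, $\sigma^{-2}\E\{(W-R)\}$ coming from \eqref{cw-fourth-Wf} and \eqref{cw-fourth-W2f}.
\end{itemize}

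Next, I substitute the polynomial expansions \eqref{cw-fourth-R-local}--\eqref{cw-fourth-L-local} of $R$ and $\hat L$ in powers of $W$, with coefficients of explicit orders: $R$ has $W^2$ coefficient $O(n^{-1/2})$ and $W^3$ coefficient $O(n^{-1})$, while $\hat L$ has $W$ coefficient $O(n^{-1/2})$ and $W^2$ coefficient $O(n^{-1})$. This expresses each moment $\E W^k$ as a combination of lower and slightly higher moments with small coefficients, plus remainders. The remainders $\mathcal R_{4,1,1}$ and $\mathcal R_{4,2,1}$ are controlled by \eqref{cw-fourth-local-bounds} once we know $\E|W|^p\leq C_{\beta,h}$ for $p\leq 8$.

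That uniform moment bound is the first step. I will obtain it either from the standard Curie--Weiss concentration $\E e^{c|W|}\leq C$ in the stated regime (cf.\ \citet{CFS2013AOP}), or self-consistently: take $f(w)=w^{2p-1}$ in \eqref{cw-exchangeable-stein}, bound $|\hat L|,|R|\leq C(n^{-1/2}(1+|W|)+n^{-1}W^2+\dots)$ using $|W|\leq C\sqrt n$, and absorb.

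With moments bounded, I solve iteratively by order. At order $n^{-1/2}$, $\E W=\E R$ picks up the $W^2$ coefficient of $R$ times $\E W^2=1+O(n^{-1})$ plus the constant $O(n^{-1/2})$ term of $R$. Next, $\E W^2=1+O(n^{-1})$ comes from $\E \hat L$ and $\E RW$, where the odd moments contribute through $\E W$ and $\E W^3$ at order $n^{-1/2}$. Then I get $\E W^3$ to $O(n^{-3/2})$, and finally $\E W^2$ and $\E W^4$ to $O(n^{-2})$. Each refinement requires feeding the previous expansions back in. Gaussian-type leading values ($\E W^4\approx3$, $\E W^3=O(n^{-1/2})$, $\E W^5=O(n^{-1/2})$, $\E W^6\approx15$) enter through the products. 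The explicit constants are obtained by substituting $\lambda=\{1-\beta(1-m^2)\}/n$ and $\sigma^2=n(1-m^2)/\{1-\beta(1-m^2)\}$ and simplifying.

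The consequences are then algebra. $\Var W=\E W^2-(\E W)^2$ follows directly, and the centered cumulants $\Gamma_3=\E W^3-3\E W\E W^2+2(\E W)^3$ and $\Gamma_4$ follow from the standard formulas; one checks that they match \eqref{cw-explicit-cumulants} up to the stated errors. The main obstacle is bookkeeping. Reaching $O(n^{-2})$ precision for the even moments forces tracking every cross term of order $n^{-1}$, including the Taylor coefficient $\beta^3(1-m^2)(3m^2-1)/3$, products of two $n^{-1/2}$ coefficients, and $\E W^5$, $\E W^6$ to leading order. I will organize the computation as a perturbation expansion in $n^{-1/2}$, with symbolic coefficients checked against the i.i.d.\ limit $\beta\to0$.
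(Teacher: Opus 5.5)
Your architecture matches the paper's: moment identities from \eqref{cw-exchangeable-stein} with $f(w)=w^k$, local expansions of $R$ and $\hat L$, a rough-then-refined bootstrap, and the cumulant algebra at the end. The gap is that the remainder control you describe cannot reach the stated error orders.

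First, \eqref{cw-fourth-local-bounds} cannot be used as a black box. $\mathcal R_{4,1,1}$ contains the deterministic constant $-\beta m(1-m^2)/(\lambda n\sigma)$ of $R$, which is of order $n^{-1/2}$. Likewise $\mathcal R_{4,2,1}$ contains the constant $\beta/n$ of $\hat L$. Bounding them in absolute value therefore wipes out the leading term of $\E W=\E R$ and the $\sigma^{-2}$ coefficient of $\E W^2$. You must work with the full expansions \eqref{cw-R-expansion}--\eqref{cw-L-expansion}, as your reference to ``the constant term of $R$'' tacitly does.

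More seriously, even the full cubic expansion of $\tanh$ in \eqref{cw-tanh-expansion} does not give the $O(n^{-2})$ claims. Its Taylor remainder contributes to $R$ a term of absolute size $O\{n^{-3/2}(1+|W|)^4\}$. The leading part of that term is $c\,n^{-3/2}W^4$, with $c=c(\beta,h)$ proportional to $m(1-m^2)(2-3m^2)$, so it is nonzero in general. With only an absolute bound on it, $\E(RW)$ and $\E(RW^3)$ in \eqref{cw-second-moment-identity} and \eqref{cw-fourth-moment-identity} are determined only up to $O(n^{-3/2})$. You then obtain $\E W^2$, $\E W^4$ and $\Gamma_4-\gamma_{4,n}$ only to $O(n^{-3/2})$, not $O(n^{-2})$. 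Tracking the cubic coefficient $\beta^3(1-m^2)(3m^2-1)/3$ is therefore not the end of the bookkeeping.

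The missing step is \eqref{cw-tanh-fourth-remainder}--\eqref{cw-moment-reduction-errors}. Expand $\tanh$ one order further, so that $R$ acquires an explicit even quartic polynomial in $W$ of size $n^{-3/2}$ plus a quintic remainder $O\{n^{-2}(1+|W|^5)\}$. Then use parity: the even part of $R$ paired with $W$ or $W^3$ produces $\E W^5$ and $\E W^7$, which are $O(n^{-1/2})$, and this gives the extra factor $\sigma^{-1}$. Consequently your rough bootstrap must also establish $\E W^7=O(n^{-1/2})$ and $\E W^6=15+O(n^{-1})$. Through the $k=6$ recursion the paper obtains these from a priori moments up to order $10$ in \eqref{cw-fixed-moment-bound}, not $8$.

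For those a priori moments, only your citation route is viable, and only if the cited bound covers $\beta\geq1$, $h\neq0$. The self-consistent route does not close, for two reasons.
\begin{itemize}
\item $R$ has a $W^2$ coefficient of order $n^{-1/2}$. After using $|W|\leq C\sqrt n$, the term $\E\{RW^{2p-1}\}$ is bounded only by an order-one multiple of $\E W^{2p}$, so nothing can be absorbed.
\item For $\beta>1$ and small $|h|\neq0$, the drift $x-\tanh(\beta x+\beta h)$ underlying $\E(\Delta\mid\Sigma)$ has additional zeros at the metastable and unstable mean-field solutions. Crude Stein bounds cannot rule out mass near them.
\end{itemize}
The paper instead obtains the moment bound from the Gaussian representation: $S/n+Z/\sqrt{n\beta}$ has density proportional to $e^{-nG_H(u)}$, and $G_H(u)-G_H\{m(H)\}$ admits a uniform quadratic lower bound.
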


\begin{proof}[Proof of Lemma \ref{lemma-cw-moment-expansion}]
We first bound the absolute moments of the normalized magnetization. This lemma only needs a bound at $h$, whereas \eqref{cw-tilted-moments} requires bounds under exponential tilting. Exponential tilting in \eqref{intro-tilted-definitions} multiplies the unnormalized Gibbs weight in \eqref{cw-density} by $e^{tW}=e^{tS/\sigma-tnm/\sigma}$. The factor $e^{-tnm/\sigma}$ cancels upon normalization, giving the Gibbs law with external field $h+t/(\beta\sigma)$. We therefore prove the bound uniformly for fields $H$ near $h$.

Choose a compact interval $J$ containing $h$ in its interior, with $0\notin J$ when $\beta\geq1$. For $H\in J$, write $\p_{\beta,H}$ for the Gibbs law with field $H$. Let $m(H)$ be the solution of $m(H)=\tanh\{\beta m(H)+\beta H\}$ with $m(H)H\geq0$. Define $\sigma(H)$ by \eqref{cw-normalization} with $m$ replaced by $m(H)$.

We use the Gaussian representation in \citet[Proposition~4.1]{DM2023}. Let $Z\sim N(0,1)$ be independent of the spins and put $X_H=S/n+Z/\sqrt{n\beta}$. Completing the square in the Gibbs weight shows that $X_H$ has density proportional to $e^{-nG_H(u)}$, where
\begin{equation}\label{cw-gaussian-potential}
 G_H(u)=\frac{\beta u^2}{2}-\log\cosh(\beta u+\beta H),\qquad u\in\mathbb R.
\end{equation}
The function in \eqref{cw-gaussian-potential} has a unique global minimum at $m(H)$, with $G_H''\{m(H)\}=\beta[1-\beta\{1-m(H)^2\}]>0$. The implicit function theorem makes $m(H)$ smooth on $J$. The quadratic comparison in the proof of \citet[Proposition~6.1]{DM2023} holds uniformly for $H\in J$. Indeed, the ratio $[G_H(u)-G_H\{m(H)\}]/\{u-m(H)\}^2$ extends continuously at $u=m(H)$ to $G_H''\{m(H)\}/2>0$. It is positive elsewhere and tends to $\beta/2$ uniformly as $|u|\to\infty$. Compactness of $J$ therefore gives a uniform positive lower bound. The upper bound follows from $G_H''(u)\leq\beta$, giving
\[
 c\{u-m(H)\}^2\leq G_H(u)-G_H\{m(H)\}
 \leq\frac{\beta}{2}\{u-m(H)\}^2,\qquad H\in J,\quad u\in\mathbb R,
\]
where $c>0$ depends only on $\beta$ and $J$. Comparing the two Gaussian integrals, we obtain
\begin{equation}\label{cw-gaussian-moment-bound}
 \E_{\beta,H}\exp\!\left\{\frac{cn}{2}\{X_H-m(H)\}^2\right\}
 \leq\frac{\int_{\mathbb R}e^{-cnu^2/2}\,du}{\int_{\mathbb R}e^{-n\beta u^2/2}\,du}
 =\sqrt{\frac{\beta}{c}}.
\end{equation}
Since $\E_{\beta,H}(X_H\mid S)=S/n$, conditional Jensen's inequality gives the same bound in \eqref{cw-gaussian-moment-bound} with $X_H$ replaced by $S/n$. Together with $\sigma(H)\asymp_{\beta,J}\sqrt n$, this yields
\begin{equation}\label{cw-fixed-moment-bound}
 \sup_{H\in J}\E_{\beta,H}\left|\frac{S-nm(H)}{\sigma(H)}\right|^r
 \leq C_{\beta,J,r},\qquad 0\leq r\leq10.
\end{equation}
In particular, $\E|W|^r\leq C_{\beta,h,r}$ for the original field $h$.

We now use the exchangeable-pair identity to obtain the required moment expansions. For an integer $k\geq1$, we take $f(w)=w^k$ in \eqref{cw-exchangeable-stein} and expand $W^k-(W-\Delta)^k$ to obtain
\begin{align}
\E W^{k+1}
={}&k\E W^{k-1}+k\E(\hat LW^{k-1})+\E(RW^k)
+\frac1{2\lambda}
\sum_{\ell=2}^{k}(-1)^{\ell+1}\binom{k}{\ell}
\E(\Delta^{\ell+1}W^{k-\ell}).\label{cw-raw-moment-recursion}
\end{align}
In particular, exchangeability, the definition of $\hat L$, and the identities for powers of $\Delta$ imply
\begin{align}
\E W^2
={}&1+\E\hat L+\E(RW),
\label{cw-second-moment-identity}\displaybreak[1]\\
\E W^3
={}&2\E W+2\E(W\hat L)+\E(RW^2),
\label{cw-third-moment-identity}\displaybreak[1]\\
\E W^4
={}&3\E W^2+3\E(W^2\hat L)+\E(RW^3)
-\frac2{\sigma^2}(1+\E\hat L).
\label{cw-fourth-moment-identity}
\end{align}
Taking $f(w)=1$ gives $\E W=\E R$. By \eqref{cw-R-expansion}, \eqref{cw-L-expansion}, and \eqref{cw-fixed-moment-bound}, the three quantities $\E W$, $\E(W\hat L)$ and $\E(RW^2)$ are $O_{\beta,h}(\sigma^{-1})$. Thus \eqref{cw-third-moment-identity} gives $\E W^3=O_{\beta,h}(\sigma^{-1})$. Substitution into the same local expansions yields $\E\hat L=O_{\beta,h}(\sigma^{-2})$ and $\E(RW)=O_{\beta,h}(\sigma^{-2})$, so $\E W^2=1+O_{\beta,h}(\sigma^{-2})$.

For $k=4$, exchangeability gives
\[
\E(\Delta W^2)=\E(\Delta^2W)=2\lambda\E\{(1+\hat L)W\}.
\]
Using the identities for powers of $\Delta$ in \eqref{cw-raw-moment-recursion}, we obtain
\[
 \E W^5=4\E W^3+4\E(\hat LW^3)+\E(RW^4)
       -8\sigma^{-2}\{\E W+\E(\hat LW)\}.
\]
The local expansions and \eqref{cw-fixed-moment-bound} bound $\E(\hat LW^3)$ and $\E(RW^4)$ by $O_{\beta,h}(\sigma^{-1})$, and hence $\E W^5=O_{\beta,h}(\sigma^{-1})$. These odd-moment estimates give $\E(W^2\hat L),\E(RW^3)=O_{\beta,h}(\sigma^{-2})$ and therefore
\[
\E W^4=3+O_{\beta,h}(\sigma^{-2}).
\]

For $k=6$, the difference sum in \eqref{cw-raw-moment-recursion} is $O_{\beta,h}(\sigma^{-1})$, using $|\Delta|\leq2/\sigma$, $\lambda^{-1}=O_{\beta,h}(\sigma^2)$ and \eqref{cw-fixed-moment-bound}. The same order holds for $\E(\hat LW^5)$ and $\E(RW^6)$, so $\E W^7=O_{\beta,h}(\sigma^{-1})$. For $k=5$, exchangeability gives
\[
\E(\Delta W^3)=\frac12\E\{\Delta^2(W^2+WW'+W'^2)\}
=O_{\beta,h}(\lambda).
\]
Together with $\Delta^6=16\sigma^{-4}\Delta^2$, this bounds the difference sum by $O_{\beta,h}(\sigma^{-2})$. The estimates for the fifth and seventh moments also give $\E(\hat LW^4),\E(RW^5)=O_{\beta,h}(\sigma^{-2})$. Thus $\E W^6=5\E W^4+O_{\beta,h}(\sigma^{-2})$. We have proved
\begin{equation}\label{cw-rough-moments}
 \begin{aligned}
 &\E W^{2j+1}=O_{\beta,h}(\sigma^{-1})\quad(0\leq j\leq3),\\
 &\E W^2=1+O_{\beta,h}(\sigma^{-2}),\quad
 \E W^4=3+O_{\beta,h}(\sigma^{-2}),\quad
 \E W^6=15+O_{\beta,h}(\sigma^{-2}).
 \end{aligned}
\end{equation}

By Taylor's expansion, we can write the remainder in \eqref{cw-tanh-expansion} as
\begin{equation}\label{cw-tanh-fourth-remainder}
 \mathcal E_{i,4}
 =\frac{\beta^4m(1-m^2)(2-3m^2)}{3n^4}
   (\sigma W-\sigma_i)^4+\mathcal E_{i,5},
\end{equation}
where $|\mathcal E_{i,5}|\leq C_{\beta,h}n^{-5}(\sigma|W|+1)^5$. Substitution of the fourth-order term on the right-hand side of \eqref{cw-tanh-fourth-remainder} into \eqref{cw-fourth-R-definition} and \eqref{cw-L-basic} yields, respectively,
\begin{align}
&\frac{\beta^4m(1-m^2)(2-3m^2)}
{3\lambda n^5\sigma}
\bigl[(n-4)\sigma^4W^4-4nm\sigma^3W^3
+(6n-4)\sigma^2W^2-4nm\sigma W+n\bigr],
\label{cw-R-quartic-term}\displaybreak[1]\\
&-\frac{\beta^4m(1-m^2)(2-3m^2)}
{3n^5(1-m^2)}
\bigl[\sigma^5W^5+nm\sigma^4W^4
-(4n-6)\sigma^3W^3\notag\\*
&\qquad\qquad
+6nm\sigma^2W^2-(4n-1)\sigma W+nm\bigr].\label{cw-L-quartic-term}
\end{align}
The Taylor remainders $\mathcal E_{i,5}$ in \eqref{cw-tanh-fourth-remainder} contribute the following terms to \eqref{cw-fourth-R-definition} and \eqref{cw-L-basic}, respectively:
\begin{equation}\label{cw-fifth-remainder-definitions}
 \mathcal E_{R,5}=\frac1{\lambda n\sigma}\sum_{i=1}^n\mathcal E_{i,5},\qquad
 \mathcal E_{L,5}=-\frac1{n(1-m^2)}\sum_{i=1}^n\sigma_i\mathcal E_{i,5}.
\end{equation}
The bound following \eqref{cw-tanh-fourth-remainder} gives
\begin{equation}\label{cw-RL-fifth-errors}
|\mathcal E_{R,5}|
\leq C_{\beta,h}n^{-2}(1+|W|^5),\qquad
|\mathcal E_{L,5}|
\leq C_{\beta,h}n^{-5/2}(1+|W|^5).
\end{equation}

We combine the polynomial terms in \eqref{cw-R-expansion} and \eqref{cw-L-expansion} with the fourth-order contributions \eqref{cw-R-quartic-term} and \eqref{cw-L-quartic-term}. The remaining random terms are defined in \eqref{cw-fifth-remainder-definitions}. Multiplying by $W^k$, taking expectations, and using \eqref{cw-normalization}, we obtain
\nopagebreak
\begin{align}
\E(RW^k)
={}&-\frac{\beta m(1-m^2)}
{\sigma\{1-\beta(1-m^2)\}}\E W^k-\frac{\beta^2m(1-m^2)^2}
{\sigma\{1-\beta(1-m^2)\}^2}\E W^{k+2}
\label{cw-R-moment-reduction}\\*
&-\frac{\beta(1-m^2)^2(1-2\beta m^2)}
{\sigma^2\{1-\beta(1-m^2)\}^2}\E W^{k+1}
+\frac{\beta^3(1-m^2)^3(3m^2-1)}
{3\sigma^2\{1-\beta(1-m^2)\}^3}\E W^{k+3}+\varepsilon_{R,k},\notag\displaybreak[1]\\
\E(\hat LW^k)
={}&-\frac{m\{1+\beta(1-m^2)\}}
{\sigma\{1-\beta(1-m^2)\}}\E W^{k+1}+\frac{\beta(1-m^2)}
{\sigma^2\{1-\beta(1-m^2)\}}\E W^k
\label{cw-L-moment-reduction}\\*
&-\frac{\beta(1-m^2)^2(1-\beta m^2)}
{\sigma^2\{1-\beta(1-m^2)\}^2}\E W^{k+2}
+\varepsilon_{L,k}.\notag
\end{align}
Here $k$ is an integer with $0\leq k\leq4$ in \eqref{cw-R-moment-reduction}, while $0\leq k\leq3$ in \eqref{cw-L-moment-reduction}. The errors $\varepsilon_{R,k}$ and $\varepsilon_{L,k}$ are deterministic, unlike the random remainders $\mathcal E_{R,5}$ and $\mathcal E_{L,5}$. They contain the expectations of the omitted polynomial terms, together with $\E(\mathcal E_{R,5}W^k)$ and $\E(\mathcal E_{L,5}W^k)$, respectively. The omitted terms of order $\sigma^{-3}$ are even polynomials in $W$ for $R$ and odd polynomials for $\hat L$. Thus the odd-moment estimates in \eqref{cw-rough-moments} give an extra factor $\sigma^{-1}$ when $k$ is odd for $R$ or even for $\hat L$. Together with \eqref{cw-fixed-moment-bound} and \eqref{cw-RL-fifth-errors}, this gives
\begin{align}
\varepsilon_{R,k}
&=\begin{cases}
O_{\beta,h}(\sigma^{-3}),&k=0,2,4,\\
O_{\beta,h}(\sigma^{-4}),&k=1,3,
\end{cases}
&
\varepsilon_{L,k}
&=\begin{cases}
O_{\beta,h}(\sigma^{-4}),&k=0,2,\\
O_{\beta,h}(\sigma^{-3}),&k=1,3.
\end{cases}
\label{cw-moment-reduction-errors}
\end{align}

Since $\E W=\E R$, \eqref{cw-R-moment-reduction} with $k=0$ and \eqref{cw-moment-reduction-errors} yield the expansion of $\E W$ below.  Substituting \eqref{cw-L-moment-reduction} with $k=1$ and \eqref{cw-R-moment-reduction} with $k=2$ into \eqref{cw-third-moment-identity}, and then using \eqref{cw-moment-reduction-errors}, we obtain the expansion of $\E W^3$:
\begin{align}
\E W
={}&-\frac{\beta m(1-m^2)}
{\sigma\{1-\beta(1-m^2)\}}
-\frac{\beta^2m(1-m^2)^2}
{\sigma\{1-\beta(1-m^2)\}^2}
+O_{\beta,h}(\sigma^{-3})
\notag\\
={}&-\frac{\beta m(1-m^2)}
{\sigma\{1-\beta(1-m^2)\}^2}
+O_{\beta,h}(\sigma^{-3}),
\label{cw-first-moment-refined}\displaybreak[1]\\
\E W^3
={}&2\E W-\frac{2m\{1+\beta(1-m^2)\}}
{\sigma\{1-\beta(1-m^2)\}}
-\frac{\beta m(1-m^2)}
{\sigma\{1-\beta(1-m^2)\}}\notag\\
&-\frac{3\beta^2m(1-m^2)^2}
{\sigma\{1-\beta(1-m^2)\}^2}
+O_{\beta,h}(\sigma^{-3})\label{cw-third-moment-refined}\\
={}&-\frac{m\{2+3\beta(1-m^2)\}}
{\sigma\{1-\beta(1-m^2)\}^2}
+O_{\beta,h}(\sigma^{-3}).\notag
\end{align}
Substituting \eqref{cw-L-moment-reduction} with $k=3$ and \eqref{cw-R-moment-reduction} with $k=4$ into \eqref{cw-raw-moment-recursion} with $k=4$, and then using \eqref{cw-moment-reduction-errors}, we obtain
\begin{align}\label{cw-fifth-moment-refined}
\E W^5
={}&4\E W^3-\frac{12m\{1+\beta(1-m^2)\}}
{\sigma\{1-\beta(1-m^2)\}}
-\frac{3\beta m(1-m^2)}
{\sigma\{1-\beta(1-m^2)\}}
-\frac{15\beta^2m(1-m^2)^2}
{\sigma\{1-\beta(1-m^2)\}^2}
+O_{\beta,h}(\sigma^{-3})
\notag\\
={}&-\frac{5m\{4+3\beta(1-m^2)\}}
{\sigma\{1-\beta(1-m^2)\}^2}
+O_{\beta,h}(\sigma^{-3}).
\end{align}
Substituting \eqref{cw-L-moment-reduction} with $k=0$ and \eqref{cw-R-moment-reduction} with $k=1$ into \eqref{cw-second-moment-identity}, and then using \eqref{cw-first-moment-refined}, \eqref{cw-third-moment-refined}, \eqref{cw-rough-moments}, and \eqref{cw-moment-reduction-errors}, we obtain
\begin{align}
\E W^2
={}&1+\frac{\beta(1-m^2)}
{\sigma^2\{1-\beta(1-m^2)\}^4}
\bigl[3m^2-1+\beta(1-m^2)(1+2m^2)\bigr]
+O_{\beta,h}(\sigma^{-4}).
\label{cw-second-moment-refined}
\end{align}

For $\E(W^2\hat L)$ and $\E(RW^3)$ in \eqref{cw-fourth-moment-identity}, we use \eqref{cw-L-moment-reduction} with $k=2$ and \eqref{cw-R-moment-reduction} with $k=3$, respectively. After substituting \eqref{cw-first-moment-refined}--\eqref{cw-second-moment-refined} and using \eqref{cw-moment-reduction-errors}, \eqref{cw-rough-moments}, and \mbox{$\E W^6=15+O_{\beta,h}(\sigma^{-2})$}, we obtain the fourth-moment expansion through order~$\sigma^{-2}$:
\begin{align}
\E W^4
={}&3+\frac1{\sigma^2}\Biggl[
\frac{3\beta(1-m^2)
\{3m^2-1+\beta(1-m^2)(1+2m^2)\}}
{\{1-\beta(1-m^2)\}^4}
\label{cw-fourth-moment-refined}\\*
&+3\Biggl\{
\frac{m^2\{1+\beta(1-m^2)\}
\{2+3\beta(1-m^2)\}}
{\{1-\beta(1-m^2)\}^3}
+\frac{\beta(1-m^2)}
{1-\beta(1-m^2)}\notag\\*
&\quad-\frac{3\beta(1-m^2)^2(1-\beta m^2)}
{\{1-\beta(1-m^2)\}^2}\Biggr\}\notag\\*
&+\frac{\beta m^2(1-m^2)\{2+3\beta(1-m^2)\}}
{\{1-\beta(1-m^2)\}^3}
+\frac{5\beta^2m^2(1-m^2)^2
\{4+3\beta(1-m^2)\}}
{\{1-\beta(1-m^2)\}^4}\notag\\*
&-\frac{3\beta(1-m^2)^2(1-2\beta m^2)}
{\{1-\beta(1-m^2)\}^2}
+\frac{5\beta^3(1-m^2)^3(3m^2-1)}
{\{1-\beta(1-m^2)\}^3}-2\Biggr]
+O_{\beta,h}(\sigma^{-4})\notag\displaybreak[1]\\
={}&3+\frac{
2(3m^2-1)+4\beta(1-m^2)(8m^2-1)
+6\beta^2(1-m^2)^2(1+2m^2)}
{\sigma^2\{1-\beta(1-m^2)\}^4}+O_{\beta,h}(\sigma^{-4}).\notag
\end{align}
Since $\sigma\asymp_{\beta,h}\sqrt n$, \eqref{cw-first-moment-refined}--\eqref{cw-fourth-moment-refined} establish \eqref{cw-raw-moment-expansions} and the stated mean and variance bounds.

It remains to determine the third and fourth cumulants from \eqref{cw-raw-moment-expansions}.  For the third cumulant,
\begin{align}
\Gamma_3
=\E W^3-3(\E W)(\E W^2)+2(\E W)^3
=\E W^3-3\E W+O_{\beta,h}(n^{-3/2})
=\gamma_{3,n}+O_{\beta,h}(n^{-3/2}).\notag
\end{align}
We use the first four moments to compute the fourth cumulant.
\begin{align}\label{cw-fourth-cumulant-from-raw}
\Gamma_4
&=\E(W-\E W)^4
-3\{\E(W-\E W)^2\}^2\\*
&=\E W^4-4(\E W)(\E W^3)-3(\E W^2)^2
+12(\E W)^2\E W^2-6(\E W)^4.\notag
\end{align}
After substituting \eqref{cw-raw-moment-expansions} into \eqref{cw-fourth-cumulant-from-raw}, the coefficient of $\sigma^{-2}\{1-\beta(1-m^2)\}^{-4}$ is
\begin{align}
&2(3m^2-1)+4\beta(1-m^2)(8m^2-1)
+6\beta^2(1-m^2)^2(1+2m^2)
\label{cw-fourth-cumulant-coefficient}\\*
&\quad-4\beta m^2(1-m^2)\{2+3\beta(1-m^2)\}\notag\\*
&\quad-6\beta(1-m^2)
\{3m^2-1+\beta(1-m^2)(1+2m^2)\}
+12\beta^2m^2(1-m^2)^2\notag\\*
&=2\{3m^2-1+\beta(1-m^2)(3m^2+1)\}.\notag
\end{align}
\eqref{cw-explicit-cumulants}, \eqref{cw-fourth-cumulant-from-raw}, and \eqref{cw-fourth-cumulant-coefficient} therefore imply $\Gamma_4=\gamma_{4,n}+O_{\beta,h}(n^{-2})$, which completes the proof.
\end{proof}

To bound the scalar remainders in Lemma \ref{lemma-cw-fourth-stein}, we first control the tilted moments of $W$. Using the field shift and the mean-field solution $m(H)$ from the proof of Lemma \ref{lemma-cw-moment-expansion}, we set $\tilde m=m\{h+t/(\beta\sigma)\}$. Set $\tilde\sigma^2=n(1-\tilde m^2)/\{1-\beta(1-\tilde m^2)\}$. The fixed-point equation implies, uniformly for $|t|\leq c_0n^{3/10}$,
\begin{equation}\label{cw-mtilde-expansion}
\tilde m-m
=\frac{1-m^2}{1-\beta(1-m^2)}\frac{t}{\sigma}
+O_{\beta,h}\left(\frac{t^2}{\sigma^2}\right),
\end{equation}
and hence
\begin{equation}\label{cw-sigmatilde-ratio}
\frac{\tilde\sigma}{\sigma}
=1+O_{\beta,h}\left(\frac{|t|}{\sigma}\right).
\end{equation}
Since $\sigma\asymp_{\beta,h}\sqrt n$, the field changes by $O_{\beta,h}(n^{-1/5})$. For sufficiently large $n$, it remains in the interval $J$ used in the proof of Lemma \ref{lemma-cw-moment-expansion}. The uniform bound \eqref{cw-fixed-moment-bound} therefore gives $\E_t|(S-n\tilde m)/\tilde\sigma|^r\leq C_{\beta,h,r}$ for $0\leq r\leq6$. As
\[
W=\frac{\tilde\sigma}{\sigma}\frac{S-n\tilde m}{\tilde\sigma}
+\frac{n(\tilde m-m)}{\sigma},
\]
\eqref{cw-mtilde-expansion} and \eqref{cw-sigmatilde-ratio} imply
\begin{equation}\label{cw-tilted-moments}
\E_t|W|^r\leq C_{\beta,h,r}(1+|t|^r),
\qquad 0\leq r\leq6,\quad |t|\leq c_0n^{3/10}.
\end{equation}

The finitely many constants $C_{\beta,h,r}$ are absorbed into $C_{\beta,h}$.  For $k=0,1,2$, \eqref{cw-fourth-local-bounds} and \eqref{cw-tilted-moments} imply
\begin{align}
\E_t\{|W|^k|\mathcal R_{4,1,1}|\}
&\leq C_{\beta,h}\bigl[
n^{-1/2}(1+|t|^k)
+n^{-3/2}(1+|t|^{k+4})\bigr],
\label{cw-fourth-rhoR-tilted}\displaybreak[1]\\
\E_t\{|W|^k|\mathcal R_{4,2,1}|\}
&\leq C_{\beta,h}\bigl[
n^{-1}(1+|t|^k)
+n^{-3/2}(1+|t|^{k+3})\bigr].
\label{cw-fourth-rhoL-tilted}
\end{align}
The exact decompositions \eqref{cw-fourth-R-local}--\eqref{cw-fourth-L-local} further imply
\begin{align}
\E_t\{|W|^k|R|\}
\leq{}&C_{\beta,h}\bigl[
n^{-1/2}(1+|t|^{k+2})+n^{-1}(1+|t|^{k+3})\bigr]
+\E_t\{|W|^k|\mathcal R_{4,1,1}|\},
\label{cw-fourth-R-tilted}\displaybreak[1]\\
\E_t\{|W|^k|\hat L|\}
\leq{}&C_{\beta,h}\bigl[
n^{-1/2}(1+|t|^{k+1})+n^{-1}(1+|t|^{k+2})\bigr]
+\E_t\{|W|^k|\mathcal R_{4,2,1}|\}.
\label{cw-fourth-L-tilted}
\end{align}
In \eqref{cw-fourth-R42-L}--\eqref{cw-fourth-R44-R}, each local remainder is multiplied by a polynomial of degree at most two in $W$.  The $k=0,1,2$ cases of \eqref{cw-fourth-rhoR-tilted}--\eqref{cw-fourth-L-tilted}, together with $|t|\leq cn^{3/10}$, control all terms in \eqref{cw-fourth-R42-L}--\eqref{cw-fourth-R44-R} and imply \eqref{cw-fourth-R41-bound}--\eqref{cw-fourth-R44-bound}.  In $\mathcal R_{4,4}$, the $\chi_{4,n}W$-terms in \eqref{cw-fourth-R44-R0} and \eqref{cw-fourth-R44-R} cancel, while $-\chi_{4,n}R$ is bounded by \eqref{cw-fourth-R-tilted}.  Thus, uniformly for $|t|\leq cn^{3/10}$,
\begin{align*}
|t|\E_t|\mathcal R_{4,1}|
&=O_{\beta,h}(c^5)+o_{\beta,h,c}(1),\qquad
|t|^2\E_t|\mathcal R_{4,2}|
=O_{\beta,h}(c^5)+o_{\beta,h,c}(1),\\
|t|^3\E_t|\mathcal R_{4,3}|
&=O_{\beta,h}(c^5)+o_{\beta,h,c}(1),\qquad
n|t|\E_t|\mathcal R_{4,4}|
=O_{\beta,h,c}(n^{-1/10}).
\end{align*} 
The six kernels in \eqref{cw-fourth-kernel-0}--\eqref{cw-fourth-kernel-5} are supported on $|u|\leq|\Delta|\leq2\sigma^{-1}$. The definitions of $\lambda$, $\sigma$, and $W$ imply $\lambda^{-1}=O_{\beta,h}(n)$, $\sigma^{-1}=O_{\beta,h}(n^{-1/2})$ and $|W|\leq C_{\beta,h}n^{1/2}$, and \eqref{cw-fourth-chi} implies $\chi_{4,n}=O_{\beta,h}(n^{-3/2})$.  Since $\int_{|u|\leq2/\sigma}|H_\Delta(u)|\,du=|\Delta|$ and $0\leq s_\Delta(u)\leq1$ on the support of $H_\Delta(u)$, the conditional total variations of $\mathcal K_{4,1,0}(u),\ldots,\mathcal K_{4,1,5}(u)$ are bounded, respectively, by
\begin{align}
&C_{\beta,h}\lambda^{-1}\sigma^{-5},
\qquad
C_{\beta,h}n^{-1/2}\lambda^{-1}\sigma^{-4},
\qquad
C_{\beta,h}n^{-1}\lambda^{-1}\sigma^{-3},
\label{cw-fourth-kernel-masses}\\
&C_{\beta,h}n^{-1}|W|\lambda^{-1}\sigma^{-4},
\qquad
C_{\beta,h}n^{-3/2}\lambda^{-1}\sigma^{-2},\qquad C_{\beta,h}\lambda^{-1}
\{n^{-1}+n^{-1/2}|W|+n^{-1}W^2\}\sigma^{-5}.\notag
\end{align}
Each quantity in \eqref{cw-fourth-kernel-masses} is at most $C_{\beta,h}n^{-3/2}$.  On the support of the kernels, $e^{|t||u|}\leq e^{2|t|/\sigma}\leq C_{\beta,h}$ for $|t|\leq cn^{3/10}$ and $0<c\leq c_0(\beta,h)$. Consequently, the support condition and \eqref{cw-kernel-bound} hold with $\rho_4=2\sigma^{-1}$ and $\vartheta_4=C_{\beta,h}n^{-3/2}$. This completes the proof.
\end{proof}

\section*{Acknowledgments}
Song-Hao Liu was partially supported by the Fundamental Research Funds for the Central Universities (No. DUT25RC(3)133). Dawei Lu was supported by the National Natural Science Foundation of China (No. 12371154) and the National Key R\&D Program of China (No. 2023YFA1009200).

The authors declare that they have no known competing financial interests or personal relationships that could have appeared to influence the work reported in this paper.

\section*{Declaration of generative AI and AI-assisted technologies in the manuscript preparation process}
During the preparation of this work the authors used ChatGPT in order to improve the readability of the article.  After using this tool, the authors reviewed and edited the content as needed and take full responsibility for the content of the published article.

\end{document}